\numberwithin{theorem}{section}
\numberwithin{equation}{section}
\definecolor{yscol}{HTML}{6622AA}
\definecolor{yzcol}{rgb}{0, 0.7, 0}
\definecolor{hwcol}{rgb}{0, 0, 0.9}
\definecolor{mlcol}{rgb}{0, 0.7, 0}
\definecolor{todocol}{rgb}{0.0, 0.4, 0.0}
\title[ \ ]{A General Framework of Linear Elasticity Enhanced Multiscale Coupling Methods for Crystalline Defects}
\author{Yanbo Zhan}
\address{Yanbo Zhan\\
	School of Mathematics\\
	Sichuan University\\
	No. 24 Yihuan Road\\
	Chengdu\\
	China
}
\email{2023322010008@scu.edu.cn}
\author{Yangshuai Wang}
\address{Yangshuai Wang\\
	Department of Mathematics\\
	Faculty of Science\\
	National University of Singapore\\
	10 Lower Kent Ridge Road\\
	Singapore
}
\email{yswang@nus.edu.sg}
\author{Hao Wang}
\address{Hao Wang\\
	School of Mathematics\\
	Sichuan University\\
	No. 24 Yihuan Road\\
	Chengdu\\
	China
}
\email{wangh@scu.edu.cn}
\date{\today}
\begin{document}
	
	\maketitle
	
	\def\rcut{r_{{\rm cut}}}
\def\bEa{\bar{\mathcal{E}}^{{\rm a}}}
\def\Ea{\mathcal{E}^{{\rm a}}}
\def\EaN{\mathcal{E}^{{\rm a},N}}

\def\Ya{\mathcal{Y}}
\def\Ua{\mathcal{U}}
\def\Rc{\mathcal{R}}
\def\ua{u^{{\rm a}}}
	\def\argm{{\rm argmin}}
	\def\Yn{\mathcal{Y}_{N}}
	\def\Un{\mathcal{U}_{N}}
	\def\yn{y^{N}}
	\def\un{u^{N}}
	\def\Ac{\Lambda_{\rm a}}
	\def\Cc{\Lambda_{\rm c}}
	\def\Ic{\Lambda_{\rm i}}
	\def\Ghc{\mathcal{G}}
	\def\Th{\mathcal{T}_{h}}
	\def\Nh{\mathcal{N}_{h}}
	\def\Uh{\mathcal{U}_{h}}
	\def\Yh{\mathcal{Y}_{h}}
	\def\YacNL{\mathcal{Y}_N}
	
	\def\yh{y^{{\rm qnll}}_{h}}
	\def\uh{u_{h}}
	\def\Ih{I_{h}^{{\rm L}}}
	\def\IhNL{I_{h}}
	\def\d{\text{d}}
	\def\Erfl{\mathcal{E}^{{\rm qnl}}}
	\def\Phia{\Phi^{{\rm a}}}
	\def\Phii{\Phi^{{\rm i}}}
	\def\PhiNLL{\Phi^{{\rm qnll}}}
	\def\Cnl{\Lambda_{{\rm nl}}}
	\def\Cl{\Lambda_{{\rm l}}}
	\def\El{\mathcal{E}^{{\rm qnll}}}
	\def\Wf{W_F}
	\def\Wpf{W^{\prime}_F}
	\def\Wppf{W^{\prime \prime}_F}
	\def\ynll{y^{{\rm qnll}}}
	\def\Pin{\Pi_{N}}
	\def\Pih{\Pi_{h}}
	\def\yrfl{y^{{\rm qnl}}}
	\def\DH{\textbf{\rm (DH)}}
	\def\CDH{C_{\bm{DH}}}
	\def\OmeA{\Omega_{{\rm a}}}
	\def\OmeI{\Omega_{{\rm i}}}
	\def\bOmeI{\overline{\Omega_{{\rm i}}}}
	\def\bOmeA{\overline{\Omega_{{\rm a}}}}
	\def\OmeC{\Omega_{{\rm c}}}
	\def\bOmeC{\overline{\Omega_{{\rm c}}}}
	\def\OmeNL{\Omega_{{\rm nl}}}
	\def\OmeL{\Omega_{{\rm l}}}
	\def\WL{W_{{\rm L}}}
	\def\c0{c_{0}}
	\def\Ki{\chi_{\xi,\rho}(x)}
	\def\conv{{\rm conv}}
	\def\Sa{S^{{\rm a}}(y;x)}
	\def\Srfl{S^{{\rm qnl}}(y;x)}
	\def\SNLL{S^{{\rm qnll}}(y;x)}
	\def\L{\bm{({\rm L})}}
	\def\S{\bm{({\rm S})}}
	\def\pGW{\partial_{F}W}
	\def\ppGW{\partial_{F}^{2}W}
	\def\pGWL{\partial_{F}W_{{\rm L}}}
	\def\ppGWL{\partial_{F}^{2}W_{{\rm L}}}
	\def\Rrfl{T^{{\rm qnl}}(y;x)}
	\def\RNLL{T^{{\rm qnll}}(y;x)}
	\def\yF{y^{F}}
	\def\vI{v_{\text{I}}}
	\def\vC{v_{\text{C}}}
	\def\va{v^{\text{a}}}
	\def\vc{v^{\text{c}}}
	\def\ganllF{\gamma^{{\rm qnll}}_{F}}
	\def\gaaF{\gamma^{{\rm a}}_{F}}
	\def\garflF{\gamma^{{\rm qnl}}_{F}}
	\def\Th{\mathcal{T}_{h}^{{\rm L}}}
	\def\ThNL{\mathcal{T}_{h}}
	\def\Nh{\mathcal{N}_{h}^{{\rm L}}}
	\def\NhNL{\mathcal{N}_{h}}
	\def\Nc{\mathcal{N}_{{\rm c}}}
	\def\Nnl{\mathcal{N}_{{\rm nl}}}
	\def\Nl{\mathcal{N}_{{\rm l}}}
	\def\Uh{\mathcal{U}_{h}^{{\rm L}}}
	\def\Yh{\mathcal{Y}_{h}^{{\rm L}}}
	\def\UhNL{\mathcal{U}_{h}}
	\def\YhNL{\mathcal{Y}_{h}}
	\def\Ph{\Pi_{h}^{{\rm L}}}
	\def\PhNL{\Pi_{h}}
	\def\tOmeL{\widetilde{\Omega}_{{\rm L}}}
	\def\tOmeC{\widetilde{\Omega}_{{\rm C}}}
	\def\tu{\tilde{u}}
	\def\vu{\textbf{u}}
	\def\vt{\textbf{t}}
	\def\vm{\textbf{m}}
	\def\vn{\textbf{n}}
	\def\vf{\textbf{f}}
	\def\vR{\textbf{R}}
	\def\R{\mathbb{R}}
	\def\Nb{\mathbb{N}}
	\def\Z{\mathbb{Z}}
	\def\C{\mathbb{C}}
	\def\P{\mathbb{P}}
	\def\V{\mathbb{V}}
	\def\E{\mathbb{E}}
	\def\U{\mathbb{U}}
	\def\kku{u^{(k-1)}}
	
	\def\ya{y^{{\rm a}} }
	\def\yai{\bar{y}^{{\rm a}}}
	\def\interface{{\rm interface}}

	\newtheorem{assumption}{Assumption}[section]
	\newtheorem{example}{Example}[section]
	\renewcommand{\theequation}{\arabic{section}.\arabic{equation}}
	\renewcommand{\thetheorem}{\arabic{section}.\arabic{theorem}}
	\renewcommand{\theassumption}{\arabic{section}.\arabic{assumption}}
	\renewcommand{\thedefinition}{\arabic{section}.\arabic{definition}}
	\renewcommand{\theexample}{\arabic{section}.\arabic{example}}

\begin{abstract}
	The atomistic-to-continuum (a/c) coupling methods, also known as the quasicontinuum (QC) methods,  are a important class of concurrent multisacle methods for modeling and simulating materials with defects.  The a/c methods aim to balance the accuracy and efficiency by coupling a molecular mechanics model (also termed as the atomistic model) in the vicinity of localized defects with the Cauchy-Born approximation of the atomistic model in the elastic far field. However, since both the molecular mechanics model and its Cauchy-Born approximation are usually a nonlinear, it potentially leads to a high computational cost for large-scale simulations. In this work, we propose an advancement of the classic quasinonlocal (QNL) a/c coupling method by incorporating a linearized Cauchy-Born model to reduce the computational cost. We present a rigorous {\it a priori} error analysis for this QNL method with linear elasticity enhancement (QNLL method), and show both analytically and numerically that it achieves the same convergence behavior as the classic (nonlinear) QNL method by proper determination of certain parameters relating to domain decomposition and finite element discretization. More importantly, our numerical experiments demonstrate that the QNLL method perform an substantial improvement of the computational efficiency in term of CPU times.
	
	
	
	%

\end{abstract}
	
	\section{Introduction}
\label{sec: introduction}


Crystalline defects, such as vacancies, dislocations, and cracks, play a crucial role in determining mechanical behavior of the materials~\cite{1996_AC_Ana_Solid_Defect_PMA,2009_Miller_Tadmor_Unified_Framework_Benchmark_MSMSE,2020_EG_Roadmap_Multiscale_Modeling_MSMSE,2012_Tadmor_Material_Con_Ato_Multiscale, chen2022qm, wang2021posteriori, chen2019adaptive}. While the defect core undergoes significant lattice distortions, the far-field region can be effectively modeled as an elastic force field~\cite{2020_JB_MD_CO_Thermo_Limit_Trans_Rate_Crystalline_Defect_ARMA,1984_Daw_Baskes_EAM_PRB, braun2022higher, olson2023elastic}. By leveraging this distinction, concurrent multiscale methods form an important class of methodology to simulating such systems. In order to achieve a balance between accuracy and computational efficiency, these methods decompose the computational domain into a small core region , which is modeled with detailed atomistic scale approaches such as molecular mechanics, and a large far-field region, which is often described by macroscopic continuum elasticity that can be further discretized by the finite element method to reduce the total degrees of freedoms (DoFs).

Concurrent multiscale methods may date back to 1980's and it was in \cite{1996_AC_Ana_Solid_Defect_PMA} that the Cauchy-Born approximation was introduced to the methods to initiate the atomisti-to-continuum (a/c) coupling methods, which are also well known as the quasicontinuum (QC) methods among the engineering community \cite{2003_RM_ET_QCM_JCAMD,2009_Miller_Tadmor_Unified_Framework_Benchmark_MSMSE,2013_ML_CO_AC_Coupling_ACTANUM, fu2023adaptive, fu2025meshac, wang2025posteriori}. Cauchy-Born approximation is essentially the continuum elasticity model derived the from a molecular mechanics model by certain homogenization process \cite{2002_PL_QCL_1D_MATHCOMP,2007_WE_PM_Cauchy_Born_Rule_and_The_Stability_ARMA,2007_PL_QCL_2D_SIAMNUM,2012_TH_CO_CB_Stability_Bravi_Lattice_M2NA, wang2021priori}. It is an important step in concurrent multiscale modeling to guarantee that the models of different scales produce the same result under elastic deformation, which is also known as the first level consistency of the method \cite{2012_WE_Principles_Multi_Model}. 

It was later discovered that the energy-based a/c coupling methods suffer from the so-called ``ghost force" at the interface between the regions where models of different scales transit \cite{1999_VS_RM_ETadmor_MOrtiz_AFEM_QC_JMPS}. Such phenomenon arises due to the nonlocality of the molecular mechanics model and the Cauchy-Born approximation which is continuum and thus local in nature. This is also known as the second level of consistency \cite{2012_WE_Principles_Multi_Model}. Considerable effort has been made in the recent two decades for the development and the mathematical analysis of the consistent a/c coupling methods so that the interface error is controllable and will not ruin the overall accuracy of the solution. Shimokawa et al. first proposed the Quasi-Nonlocal (QNL) method \cite{2004_Shimokawa_QCM_ErrAna_PRB}, which eliminates the "ghost force" by reconstructing the energy of atoms near the interface. E, Lu and Yang proposed the Geometric Reconstruction (GRC) method \cite{2006_WE_JL_ZY_GRC_PRB}, which is used to eliminate the "ghost force" in higher-dimensional cases. Ming and Yang, Dobson and Luskin conducted error analysis on the QNL method \cite{2009_PM_ZY_1D_QC_Nonlocal_MMS,2009_MD_ML_Optimal_Order_SIMNUM}. Ortner, Ortner and Wang performed a priori and a posteriori error analysis on the QNL and QNL with coarse-graining methods based on negative norms \cite{2011_CO_1D_QNL_MATHCOMP,2011_CO_HW_QC_A_Priori_1D_M3AS}. Ortner and Zhang proposed the two-dimensional geometric reconstruction-based consistent atomistic/continuum (GRAC) method~\cite{2012_CO_LZ_GRAC_Construction_SIAMNUM,2014_CO_LZ_GRAC_Coeff_Optim_CMAME}, which extends the QNL method to two-dimensional cases. We refer to \cite{2013_ML_CO_AC_Coupling_ACTANUM} for a thorough review of the QNL method, and we will follow its theoretical framework to complete some of the proofs in this paper.

The a/c methods improves the computational efficiency compared with the full molecular mechanics simulations in terms of the substantial reduced DoFs as a result of the finite element discretization of the continuum elasticity model. Therefore existing works concentrate mainly on the convergence rate for different a/c methods, which illustrate that for a given error how many DoFs are required or vice versa \cite{1980_MB_CM_WW_Dislocation_Near_Crack_JM,1982_MM_MD_Crack_Boundary_PMA,1989_HF_HE_MP_Metals_Composite_Materials_ZM,1991_SK_PG_HF_Crack_FEM_PMA} (cf. Figure \ref{fig: convergence_QNL_QNLL_alpha12_CG}, \ref{fig: convergence_QNL_QNLL_alpha10_CG} and \ref{fig: convergence_QNL_QNLL_alpha08_CG} in the current paper). However, the molecular mechanics model we employ in the core region is usually highly nonlinear and thus is the corresponding Cauchy-Born approximation in the elastic far field as well as the a/c coupling model in whole. The high nonlinearity of the a/c model may still results in an excessive computational cost, especially in large scale simulations, which motivates this study. 

%
%

The current work focuses on developing and analyzing an energy-based a/c coupling methods that integrates the classic (nonlinear) quasinonlocal (QNL) method \cite{2004_Shimokawa_QCM_ErrAna_PRB,2011_CO_1D_QNL_MATHCOMP,2012_XL_ML_Finite_Range_QNL_IMANUM} with the linearized Cauchy-Born approximation, which we name as the QNLL method. The purpose of such development is to enhance computational efficiency of the a/c method through the proper application of linearized model in the elastic far-field region while retain the same level of accuracy as the fully nonlinear coupling scheme. We then perform a rigorous {\it a prior} analysis of the QNLL method. We identify that the optimal rate of convergence of the method can be achieved by choosing certain parameters according to the regularity of the defect equilibrium. In particular, by balancing the lengths of the regions where different models apply and the size of the finite element discretization, we obtain a sharp error estimate of the QNLL method which retain the same order of convergence as that of the fully nonlinear QNL method. We validate our theoretical results through a series of numerical experiments. In addition, our numerical experiments demonstrate that QNLL achieve a substantial improvement of computational efficiency compared with the fully nonlinear QNL method in terms of CPU times.

To set out our ideas in a clear way, we consider a one dimensional atomistic system with nearest and next-nearest neighbor multibody interactions. However, we note that the error analysis and balancing techniques proposed are general and are expected to be extended to higher dimensions problems and other fully nonlinear a/c coupling methods with finite interaction ranges~\cite{wang2024theoretical, ortner2023framework}. This study serves as a proof of concept, laying the foundation for possible further extension, which are discussed in the conclusion.

\subsection{Outline}
\label{sec: outline}

The paper is organized as follows. Section~\ref{sec: qnll_model} introduces the atomistic model, the quasinonlocal (QNL) method, and the QNL method with the linearized Cauchy-Born (QNLL) coupling scheme proposed in this work. In Section~\ref{sec: anal_qnll_ncg}, we present an {\it a priori} error analysis for the QNLL method without coarse graining and propose a balancing method to adjust the lengths of various regions based on the error estimate. Numerical experiments are provided to validate the balancing method and highlight the computational efficiency of the QNLL method. Section~\ref{sec: qnll_cg} extends the analysis to the QNLL method with coarse graining in the continuum region. We provide an error estimate and propose a balancing method that includes the adjustment of the finite element mesh length. Numerical experiments demonstrate the effectiveness of this method and show that, despite the reduced degrees of freedom due to coarse graining, the QNLL method can still retain its computational efficiency. Section~\ref{sec: conclusion}  concludes the paper and outlines potential future directions. For clarity and brevity, some detailed proofs are provided in the appendices.

\subsection{Notations}
\label{sec: notations}
We use the symbol $\langle\cdot,\cdot\rangle$ to denote an abstract duality
pairing between a Banach space and its dual. The symbol $|\cdot|$ normally
denotes the Euclidean or Frobenius norm, while $\|\cdot\|$ denotes an operator
norm. We denote $A\backslash\{a\}$ by
$A\backslash a$, and $\{b-a~\vert ~b\in A\}$ by $A-a$. Directional derivatives are denoted by $\nabla \rho f := \nabla f\cdot\rho,\, \rho \in \mathbb{R}$. For $E \in C^2(X)$, the first and second variations are denoted by
$\langle\delta E(u), v\rangle$ and $\langle\delta^2 E(u) v, w\rangle$ for $u,v,w\in X$.

We write $|A| \lesssim B$ if there exists a constant $C$ such that $|A|\leq CB$, where $C$ may change from one line of an estimate to the next. When estimating rates of decay or convergence, $C$ will always remain independent of the system size, the configuration of the lattice and the the test functions. The dependence of $C$ will be normally clear from the context or stated explicitly. 


	
	\section{Atomistic to Nonlinear-Linear Elasticity Coupling Method}
\label{sec: qnll_model}

In this section, we present the atomistic-to-continuum (a/c) coupling methods in one dimension. While the extension to higher dimensions is relatively straightforward, it involves significantly more complex notations. To ensure clarity and brevity, we focus on the one-dimensional case. Section~\ref{sec: introduction_atom} introduces the atomistic model, which serves as the reference framework. Section~\ref{sec: introduction_qnl} provides an overview of the classical nonlinear quasinonlocal (QNL) method. Finally, Section~\ref{sec: introduction_qnll} details the linearization of the Cauchy-Born model and the QNL method with the Linearized Cauchy-Born (QNLL) formulation, which is the main focus of this work.

\subsection{Atomistic model}
\label{sec: introduction_atom}

We consider an infinite atomistic chain or one dimensional crystal lattice indexed by $\Z$. The reference configuration is given by $F\Z$, where $F>0$ is a macroscopic strain. 
We define the space of the displacements and the {\it admissible} set of deformations by
\begin{align*}
	\Ua&:=\{u:\Z\rightarrow\R ~|~ \nabla u \in L^{2} \},\\
	\Ya&:=\{y(x)= Fx+u(x) ~|~ u \in \Ua\}.
\end{align*}

Let $\rcut>0$, we fix an interaction range $\Rc := \{\pm1,\dots,\pm\rcut\}$. For each $y \in \Ya$ and $\xi \in \Z$, we define the finite difference stencil 
\begin{equation*}
	Dy(\xi):= \big(D_{\rho}y(\xi)\big)_{\rho \in \mathcal{R}}, \quad \text{where} ~~ D_{\rho}y(\xi):= y(\xi + \rho )-y(\xi).
\end{equation*}
For simplicity, we fix $\rcut = 2$ throughout this work. However, the analysis can be readily extended to a general interaction range.

Let $V\in C^{3}(\R^{\Rc})$ be the interatomic many-body site potential. For a deformation $y\in\Ya$, we define the energy of the infinite atomistic model by
\begin{equation}\label{All-Atomistic Energy}
	\bEa (y):= \sum_{\xi\in\Z}\Phia_{\xi}(y) := \sum_{\xi\in\Z} \left[V\big(Dy(\xi)\big)-V(F\Rc)\right],
\end{equation}
where $\Phia_{\xi}(y)$ is the site energy (per atom energy contribution~\cite{2013_ML_CO_AC_Coupling_ACTANUM}) for the site $\xi$.

For analytical purpose, we assume the regularity of site potential $V$, i.e.,
\begin{align*}
		m(\bm{\rho})&:= \prod_{i =1}^{j} \vert \rho_{i} \vert \sup_{\bm{g} \in \R^{\mathcal{R}}} \Vert V_{\rho}(\bm{g}) \Vert \quad \text{for} \ \bm{\rho} \in \mathcal{R}^{j}, \ \text{and} \\
		M^{(j,s)}&:= \sum_{\bm{\rho} \in \R^{j} } m(\bm{\rho}) \vert \bm{\rho} \vert^{s}_{\infty},
\end{align*}
where $V_\rho$ is the derivative of the energy functional $V$ with respect to $\rho$ and $\vert \bm{\rho} \vert_{\infty}:=\max_{i= 1,\dots,j}\vert \rho_{i} \vert$.

By the definition of $\Ya$ and $\Ua$, $\Ea$ is well-defined on $\Ya$, which means that the sum on the infinite lattice is actually finite \cite[Proposition 3.7]{2013_ML_CO_AC_Coupling_ACTANUM}. Given a dead load $f \in \Ya$, the atomistic model is defined by the following minimization problem 
\begin{equation}\label{All-Atomistic solution}
	\yai \in \arg \min\{\bEa(y) -\langle f,y\rangle_{\Z}\ | \ y \in \Ya\},
\end{equation}
where $\langle f,y\rangle_{\Z} = \sum_{\xi \in \Z} f(\xi)y(\xi)$ and ``$\arg\min$'' is understood as the set of local minimizers. Furthermore, we denote the local minimizer of the displacement as $\bar{u}^{\rm a}:=\bar{y}^{\rm a}-Fx$. 

If $\yai$ solves \eqref{All-Atomistic solution}, then it satisfies the first order optimality condition 
\begin{equation}\label{All-Atomistic solution condition}
	\langle \delta \bEa(\yai),v \rangle = \langle f,v\rangle_{\Z}, \quad \forall v \in \Ua.
\end{equation}
In addition to \eqref{All-Atomistic solution condition}, if $\yai \in \Ya$ satisfies the second-order optimality condition which is given by
\begin{equation}\label{All-Atomistic strong local minimizer}
	\langle \delta^{2}\bEa(\yai)v,v\rangle \ge c_{0} \Vert \nabla v \Vert^{2}_{L^{2}},  \quad \forall v\in \Ua,
\end{equation}
for some $c_{0} >0$, we say that the solution $\yai$ is a strongly stable local minimizer.

	
	To model practical defects and establish a foundation for consistency and stability analysis~\cite{2016_EV_CO_AS_Boundary_Conditions_for_Crystal_Lattice_ARMA}, we introduce the decay hypothesis, which describes the decay and regularity of local minimizers (equilibrium). This assumption ensures smooth asymptotic behavior, which is crucial for the subsequent analysis.
	
	
	\textbf{Decay Hypothesis $\DH$}: There exists a strong local minimizer $\bar{u}^{\rm a} \in \Ua$ and $\alpha > 1/2$, for $x$ sufficiently large such that
	\begin{equation}\label{Decay Hypothesis}
		\vert \nabla^{j} \bar{u}^{\rm a}(x) \vert \le \CDH x^{-\alpha+1-j}, \quad j = 0,1,2,3,
	\end{equation}
	where $\CDH>0$ is a constant that depends on lattice and interatomic potentials.

	\subsection{Classic quasinonlocal (QNL) methods}
	\label{sec: introduction_qnl}
	
	The atomistic problem defined by \eqref{All-Atomistic solution} is computationally intractable due to its formulation on an infinite lattice, its reliance on a nonlocal interaction potential across the entire domain, and the fact that each atom is treated as a separate degree of freedom. To address these challenges, atomistic/continuum (a/c) coupling methods employ domain decomposition strategies to truncate the infinite computational domain, introduce a reduced model in certain regions, and further decrease the number of degrees of freedom through coarse graining. In this section, we illustrate the construction of a/c methods using the classical quasinonlocal (QNL) method~\cite{2011_CO_1D_QNL_MATHCOMP}, which serves as the foundation for the developments presented in subsequent sections.
	
	The a/c coupling methods often make the following three steps of approximations. The first step is to truncate the infinite lattice to a finite domain on which the computation is carried out. The second step is to derive a {\it local} and continuum approximation for the {\it nonlocal} and discrete atomistic model. The third step is to decompose the domain so that the atomistic and the continuum models are properly utilized in different regions and to make a special treatment at the interfaces where the two different models meet so that nonphysical phenomenon is avoided. We now elaborate the construction of the QNL method according to the aforementioned three steps.
	
	\subsubsection{Truncation}
	
	We first truncate the infinite domain simply by fixing an $N\in \Nb$ and define the truncated computational domain to be $\Omega:=[-N,N]$. The set of lattice inside the computational domain is given by $\Lambda := \Omega \cap \Z = \{-N, \ldots, -1, 0, 1, \ldots, N\}$.


	
	
	Though the first step is very easy, we pause here to introduce an auxiliary problem that simplifies our error analysis. Specifically, we apply a Dirichlet boundary condition on $\Omega$ and define the finite dimensional space of displacements and the corresponding admissible set of deformation as
	\begin{align*}
		\Un&:= \{u\in\Ua \ | \ u(\xi) = 0 \text{ for }\xi \le -N \text{ or } \xi \ge N\},\\
		\Yn&:= \{y(x) = Fx + u(x) \ | \ u \in \Un\}.
	\end{align*}
	We can then define the truncated atomistic method (often denoted by ATM \cite{2016_EV_CO_AS_Boundary_Conditions_for_Crystal_Lattice_ARMA,2013_ML_CO_AC_Coupling_ACTANUM}) as 
	\begin{equation}\label{Atomistic solution condition}
		\ya \in \arg \min \{\Ea(y) - \langle f,y \rangle_{N} \ | \ y \in \Yn \},
	\end{equation}
	where $\Ea(y): =  \sum_{\xi =-N}^{N}\Phia_{\xi}(y)$ and $\langle f,y\rangle_{N} = \sum_{\xi =-N}^{N} f(\xi)y(\xi)$, which can be considered as a Galerkin approximation of the original atomistic model \eqref{All-Atomistic solution}. 
	
	Since our atomistic/continuum coupling methods are all defined on the finite domain $\Omega$, we can bound the error between the solution of any of the a/c methods $y^{\rm ac}$ and that of original atomistic model $\yai$ by 
	\begin{equation}
		\label{eq: error separation}
		\| y^{\rm ac} - \yai \| \le \|y^{\rm ac} - \ya \| + \|\ya - \yai \|.
	\end{equation} 
	We will only concentrate on the first part of the right hand side of \eqref{eq: error separation} in the error analysis and the estimate of the second part, which is essentially the truncation error, is given by the following lemma \cite[Theorem 3.14]{2013_ML_CO_AC_Coupling_ACTANUM}:
	
	\begin{lemma}
		Let $y^{{\rm a}}$ be a strong local minimizer of \eqref{All-Atomistic solution condition} satisfying $\DH$ and \eqref{All-Atomistic strong local minimizer}. Then there exists $N_{0} \in \Nb$ such that, for all $N \ge N_{0}$, there exists a strong local minimizer $\ya$ of \eqref{Atomistic solution condition} satisfying
		\begin{equation}\label{Truncation error}
			\Vert \nabla \yai - \nabla \ya\Vert_{L^{2}} \le \frac{16M^{(2,0)}\CDH}{\sqrt{2\alpha-1}} N^{\frac{1}{2}-\alpha}.
		\end{equation}
	\end{lemma}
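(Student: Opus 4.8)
The plan is to follow the consistency--stability paradigm for Galerkin approximations of lattice equilibria from \cite{2013_ML_CO_AC_Coupling_ACTANUM}: exhibit a good trial function in $\Yn$, bound the residual it leaves in the truncated model, transfer the stability of $\yai$ to the truncated energy for $N$ large, and then invoke a quantitative inverse function theorem to produce a nearby truncated minimizer together with the error bound. \textbf{Step 1 (trial function).} Since elements of $\Un$ must vanish for $\vert\xi\vert\ge N$ while $\bar{u}^{\rm a}$ only decays, I would first cut $\bar{u}^{\rm a}$ off to zero near $\pm N$ to obtain $\tilde{y}=Fx+\tilde{u}\in\Yn$. Using $\DH$, the approximation error is controlled by the tail of $\nabla\bar{u}^{\rm a}$,
\begin{equation*}
	\Vert\nabla\yai-\nabla\tilde{y}\Vert_{L^2}\lesssim\Big(\sum_{\vert\xi\vert\ge N-1}\vert\nabla\bar{u}^{\rm a}(\xi)\vert^2\Big)^{1/2}\lesssim\CDH\Big(\int_{N-1}^{\infty}x^{-2\alpha}\,\d x\Big)^{1/2}=\frac{\CDH\,(N-1)^{\frac12-\alpha}}{\sqrt{2\alpha-1}}\lesssim\frac{\CDH}{\sqrt{2\alpha-1}}\,N^{\frac12-\alpha},
\end{equation*}
which already displays the claimed rate and the factor $\CDH/\sqrt{2\alpha-1}$.

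\textbf{Step 2 (consistency).} Next I would estimate the residual $\langle\delta\Ea(\tilde{y})-f,v\rangle$ for $v\in\Un$. Because $\yai$ solves the infinite-lattice equilibrium \eqref{All-Atomistic solution condition}, and because for $v$ supported in $(-N,N)$ the truncated first variation $\delta\Ea$ differs from the full $\delta\bEa$ only through the finitely many next-nearest-neighbour stencils that cross $\pm N$, the residual splits into a bulk contribution $\langle\delta\Ea(\tilde{y})-\delta\Ea(\yai),v\rangle$ and a boundary contribution collecting the omitted sites $\xi=\pm(N+1)$. Both are linear in the Step~1 error and in the potential derivatives, so that the Lipschitz bound encoded in $M^{(2,0)}$ yields a consistency estimate $\eta:=\Vert\delta\Ea(\tilde{y})-f\Vert_{(\Un)^{*}}\lesssim M^{(2,0)}\,\CDH\,(2\alpha-1)^{-1/2}\,N^{\frac12-\alpha}$.

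\textbf{Step 3 (stability) and Step 4 (conclusion).} I would then show that the truncated Hessian is coercive on $\Un$, namely $\langle\delta^2\Ea(\tilde{y})v,v\rangle\ge\tfrac{c_0}{2}\Vert\nabla v\Vert_{L^2}^2$ for $N\ge N_0$. Restricting the assumed coercivity \eqref{All-Atomistic strong local minimizer} to the subspace $\Un\subset\Ua$ is free, and the change of base point from $\yai$ to $\tilde{y}$ is absorbed via Lipschitz continuity of $\delta^2\Ea$ and Step~1; the genuine work is to control the boundary stencil terms created by truncating the energy sum. With consistency $\eta$ and this coercivity in hand, a quantitative inverse function theorem gives a strong local minimizer $\ya\in\Yn$ of \eqref{Atomistic solution condition} with $\Vert\nabla\tilde{y}-\nabla\ya\Vert_{L^2}\lesssim\eta$, and the triangle inequality with Step~1 produces \eqref{Truncation error}.

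The main obstacle is Step~3. Truncating the energy to the sites $\xi\in\{-N,\dots,N\}$ omits the site $\xi=N+1$, whose next-nearest-neighbour stencil still reaches the nonzero boundary value $v(N-1)$ (and symmetrically at $\xi=-N-1$); this produces a Hessian correction of the form $c_{\rm b}\,\big(v(N-1)^2+v(-N+1)^2\big)$, where $c_{\rm b}$ is the second-order next-nearest-neighbour derivative of $V$ evaluated at the homogeneous state $F\Rc$, and this term is \emph{not} small relative to $\Vert\nabla v\Vert_{L^2}^2$. Hence coercivity of the truncated, boundary-clamped model cannot be obtained by a naive perturbation of \eqref{All-Atomistic strong local minimizer}; it has to be extracted from the lattice (phonon) stability of the homogeneous far field encoded in $c_0$ together with the decay of $\bar{u}^{\rm a}$ near $\pm N$. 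A secondary difficulty is the sharp tracking of the constant $16\,M^{(2,0)}\,\CDH$ through Steps~2--4, in which the boundary contributions must be accounted for exactly rather than merely bounded up to $\lesssim$.
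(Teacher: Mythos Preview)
The paper does not give a proof; it cites \cite[Theorem~3.14]{2013_ML_CO_AC_Coupling_ACTANUM} directly. Your outline---trial function in $\Yn$, consistency, inherited stability, quantitative inverse function theorem---is precisely the argument of that reference, so at the level of strategy you are on target.

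The obstacle you flag in Step~3 dissolves once the truncated problem is read, as the paper itself states, as a \emph{Galerkin approximation}: one works with the full energy $\bEa$ restricted to the subspace $\Un\subset\Ua$, and the coercivity \eqref{All-Atomistic strong local minimizer} at $\yai$ then restricts to $\Un$ for free, with the change of base point to $\tilde y$ absorbed by Lipschitz continuity of $\delta^2\bEa$ together with the Step~1 smallness. No separate phonon-stability extraction is needed. You are correct that the paper's explicit truncated sum $\Ea=\sum_{\xi=-N}^{N}$ differs from $\bEa|_{\Yn}$ by the two site energies at $\xi=\pm(N{+}1)$, and that this discrepancy would contribute a Hessian term $V_{-2,-2}(F\Rc)\,v(N{-}1)^2$ which is not $o(1)$; but this is a minor notational looseness between the paper and the cited result, not a gap in the underlying proof, which in the reference is carried out with $\bEa$ on $\Un$.

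A smaller point you pass over: the naive cutoff $\tilde u(\xi)=\bar u^{\rm a}(\xi)$ for $|\xi|<N$ and $0$ otherwise does \emph{not} achieve your displayed Step~1 bound, since it creates a jump of size $|\bar u^{\rm a}(\pm N)|\lesssim N^{1-\alpha}$ in $\nabla\tilde u$ at the boundary, and $N^{1-\alpha}$ dominates the target $N^{\frac12-\alpha}$. The reference first subtracts the affine interpolant of the boundary values $\bar u^{\rm a}(\pm N)$ before restricting, so that only the tail of $\nabla\bar u^{\rm a}$ and a contribution $|\bar u^{\rm a}(\pm N)|^2/N\sim N^{1-2\alpha}$ enter the squared error; this is what your displayed estimate is implicitly using.
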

	


	\subsubsection{Continuous approximation}
	
	Next, we consider the second approximation, which involves using a local continuum model to approximate the original nonlocal atomistic model. To further reduce the number of degrees of freedom, we apply the continuum approximation to the atomistic potential \eqref{All-Atomistic Energy} based on the Cauchy-Born rule~\cite{2013_CO_FT_Cauchy_Born_ARMA}, where we define the strain energy density from the interaction potential $V$ as 
	\begin{equation}
		\label{Cauchy-Born site energy}
		W(F) := V(F\Rc).
	\end{equation}

	\subsubsection{Domain decomposition}
	
	Finally, we will achieve the third approximation through the atomistic-to-continuum coupling method. The Atomistic-to-Continuum coupling method achieves an quasi-optimal trade-off between computational cost and accuracy by coupling the atomistic model with its continuum approximation. In this paper, the coupling scheme we use is the reflection method, which is show to be universally stable  \cite{2014_CO_AS_LZ_Stabilization_MMS}, and we will discuss in stability analysis. Because we fix $\rcut = 2$, the reflection method is equivalent to the Quasi-nonlocal method (QNL method) in this case \cite{2013_ML_CO_AC_Coupling_ACTANUM}. The QNL method eliminates the ``ghost force" term by providing a precise definition of the energy on the interface \cite{2011_CO_1D_QNL_MATHCOMP}. Therefore, for the sake of simplicity in naming, the remaining sections of this paper will use the QNL method to uniformly refer to the reflection method.
	
	We first of all decompose the computational domain $\Omega$ into three different regions. The first one is the atomistic region $\OmeA$ in which we assume the defect core is contained. The second one is the continuum region $\OmeC$ where we assume the deformation is smooth enough. The third one is th interface region $\OmeI$ which consists of a small number of layers of atoms between $\OmeA$ and $\OmeC$. The lattice points in the three regions are defined by $\Ac:= \OmeA \cap \Lambda$, $\Cc:= \OmeC \cap \Lambda$ and $\Ic := \OmeI \cap \Lambda$. To be more specific, we denote them by (for simplicity, we denote $\bar{K} = K+\rcut$)
	\begin{align*}
		\Ac &:=\{-K,-K+1,\dots,K\}, \qquad \qquad \qquad \qquad \qquad \qquad \qquad \quad ~~  \text{(Atomistic region)},\\
		\Ic &:=\{-\bar{K},-\bar{K}+1\}\cup \{\bar{K}-1,\bar{K}\}, \qquad \qquad \qquad \qquad \qquad \qquad ~~ \text{(Interface region)},\\
		\Cc &:=\{-N,-N+1,\dots,-\bar{K}-1\}\cup\{\bar{K}+1,\bar{K}+2,\dots,N\}, \qquad ~ \text{(Continuum region)}.
	\end{align*}
	
	Since we will calculate the integral on continuum region, we define the following three continuous intervals:
	\begin{align*}
		\OmeA &=[-K-1,K+1],\\
		\OmeI &=[-\bar{K},-\bar{K}+1] \cup [\bar{K}-1,\bar{K}],\\
		\OmeC&=[-N,-\bar{K}]\cup [\bar{K},N].
	\end{align*}
	
	%
	%
	%

	Moreover, we define, for any $x\in (\xi-1,\xi), \nabla y(x) = y(\xi)-y(\xi-1)$, which is a piecewise constant interpolation of $y$ with respect to lattice sites. The specific construction of the QNL model is given as follows:
	\begin{equation}\label{QNL energy}
		\Erfl (y)=\sum_{\xi \in \Ac}\Phia_{\xi}(y) + \sum_{\xi \in \Ic}\Phii_{\xi}(y) + \int_{\OmeC} W(\nabla y)\,\d x,
	\end{equation}
	where the interface energy reads
	\begin{align*}
		\Phii_{-L}(y)&=V(-D_{2}y,-D_{1}y,D_{1}y,D_{2}y)+\int_{-L-1}^{-L-\frac{1}{2}}W(\nabla y)\,\d x,\\
		\Phii_{-L+1}(y)&=V(2D_{-1}y,D_{-1}y,D_{1}y,D_{2}y),\\
		\Phii_{L-1}(y)&=V(D_{-2}y,D_{-1}y,D_{1}y,2D_{1}y),\\
		\Phii_{L}(y)&=V(D_{-2}y,D_{-1}y,-D_{-1}y,-D_{-2}y)+\int_{L+\frac{1}{2}}^{L+1}W(\nabla y)\,\d x.
	\end{align*}
	
	Given a dead load $f\in \Ya$, we seek solution
	\begin{equation}\label{QNL solution conditon}
		\yrfl \in \arg \min \{\Erfl (y) - \langle f,y\rangle_{N} \ | \ y \in \Yn\}.
	\end{equation}
	Similarly, if $\yrfl$ solves \eqref{QNL solution conditon}, then it satisfies the first order optimal condition
	\begin{equation}\label{QNL solution first order optimality condition}
		\langle \delta \Erfl(\yrfl ),v \rangle = \langle f,v\rangle_{N}, \quad \text{for all }v \in \Un.
	\end{equation}
	
	\subsection{QNL with linearized Cauchy-Born (QNLL) method}
	\label{sec: introduction_qnll}
	
	In this section, we will introduce the QNL Methods with Linearized Cauchy-Born (QNLL). The idea behind QNLL method is to replace a nonlinear elasticity model with a linear elasticity model to further simplify the computation. To that end, we make further approximations: based on the QNL method, we use a linear elasticity model to approximate the nonlinear elasticity model. Compared to the nonlinear model, the linear model can effectively reduce computational costs. However, this also introduces new errors, which will be analyzed in the next section.
	
	First, we construct the linear elasticity model. Here, we obtain the linear elasticity model by performing a Taylor expansion of the Cauchy-Born energy \eqref{Cauchy-Born site energy} around the uniform deformation. After introducing linearization, we will introduce the nonlinear-linear elasticity coupling (QNLL) model, which essentially replaces a portion of the nonlinear continuum energy functional $W$ in a continuum region $\OmeC$ with a linear energy functional $\WL$. Here, we first provide the form of $\WL$. For simplicity, we denote $W^{(k)}_{F} = W^{(k)}(F),\ k =0,1,2$. We use Taylor's expansion in order to linearize Cauchy-Born strain energy density: 
	\begin{equation*}
		W(\nabla y) = \Wf+\Wpf\nabla u+\frac{1}{2}\Wppf(\nabla u)^{2} +\dots.
	\end{equation*}
	We then define the linearized Cauchy-Born strain energy density:
	\begin{equation*}
		\WL(\nabla y) = \Wf+\Wpf\nabla u+\frac{1}{2}\Wppf(\nabla u)^{2}.
	\end{equation*}
	
	Next, based on the region partitioning of the QNL method, we will further subdivide $\Cc$ into nonlinear and linear regions. We denote them by
	\begin{align*}
		\Cnl &= \{- L,L+1,\dots,-\bar{K}-1\}\cup\{\bar{K}+1, \bar{K}+2,\dots,L\},\\
		\Cl &=\{-N,-N+1,\dots,L-1\}\cup \{L+1,L+2,\dots,N\}.
	\end{align*}
	Similarly, we will also divide $\OmeC$ into nonlinear and linear parts:
	\begin{align*}
		\OmeNL &=[-L,-\bar{K}] \cup [\bar{K},L],\\
		\OmeL &=[-N,L] \cup [L,N].
	\end{align*}
	
	In the linearized continuouum region, by approximating the nonlinear elasticity model with the linear elasticity model, we obtain the QNLL model:
	\begin{align}\label{Nonlinear-linear energy}
		\El (y) &= \sum_{\xi = -N}^{N}\PhiNLL_{\xi}(y) \nonumber \\
		&=\sum_{\xi \in \Ac}\Phia_{\xi}(y) + \sum_{\xi \in \Ic}\Phii_{\xi}(y) + \int_{\OmeNL} W(\nabla y)\,\d x + \int_{\OmeL} \WL(\nabla y)\,\d x.
	\end{align}

	Given a dead load $f\in \Ya$, we seek
	\begin{equation}\label{Nonlinear-linear solution condition}
		\ynll \in \arg \min \{\El (y) - \langle f,y\rangle_{N} \ | \ y \in \Yn\}.
	\end{equation}
	If $\ynll$ solves \eqref{Nonlinear-linear solution condition}, then it satisfies the first order optimal condition
	\begin{equation}\label{QNLL solution first order optimality condition}
		\langle \delta \El(y),v \rangle = \langle f,v\rangle_{N}, \quad \text{for all }v \in \Un.
	\end{equation}
	
	\section{A Priori Analysis for the QNLL Method}
\label{sec: anal_qnll_ncg}


In this section, we will provide the {\it a priori} error estimate for the atomistic to nonlinear-linear elasticity coupling model $\Vert \nabla \yai - \nabla \ynll \Vert_{L^{2}}$, following analytical framework shown in~\cite{2013_ML_CO_AC_Coupling_ACTANUM,2011_CO_1D_QNL_MATHCOMP,2011_CO_HW_QC_A_Priori_1D_M3AS}. We will first present the consistency error estimate and stability analysis results of the QNLL model, and then give the {\it a priori} error estimate based on the inverse function theorem (Lemma \ref{Inverse function theorem}) provided in the Appendix. The detailed consistency error analysis is provided in Section \ref{sec: consistency_qnll_ncg}, stability analysis in Section \ref{sec: stability_qnll_ncg}, and the final {\it a priori} error estimate in Section \ref{sec: priori_qnll_ncg}. Finally, in Section~\ref{sec: balance_of_qnll_ncg_model}, based on the {\it a priori} error estimate and the $ \DH $ assumption, we propose a method to balance the lengths of the regions in the QNLL model to achieve the same convergence order as the QNL model.

\subsection{Consistency error}
\label{sec: consistency_qnll_ncg}

In the consistency error estimate, we decompose the total error using the triangle inequality into two components: (i) the coupling error, which quantifies the difference between the atomistic model and the QNL model, and (ii) the linearization error, which arises from the transition from the QNL model to the QNLL model. By estimating these two errors separately, we derive the overall consistency error between the atomistic model and the QNLL model. Specifically, we have
\begin{align*}
	T(\ya) &= \langle \delta \Ea (\ya),v\rangle -\langle \delta \El (\ya),v\rangle\\
	&=\langle \delta \Ea (\ya),v\rangle -\langle \delta \Erfl (\ya),v\rangle ~~\qquad \text{(the coupling error)}\\
	&\ \ +\langle \delta \Erfl (\ya),v\rangle -\langle \delta \El (\ya),v\rangle. \quad \text{(the linearization error)}
\end{align*}
In the following, we establish an error bound for $T(y^{\rm a})$.

\subsubsection{QNL coupling error}

We define the weighted characteristic function of a bond $(\xi ,\xi +\rho)$ by
\begin{equation}\label{Definition of Ki}
	\Ki:=\left\{
	\begin{aligned}
		&\vert\rho\vert^{-1}, &x\in \text{int} (\conv \{\xi,\xi+\rho\}), \\
		&\frac{1}{2}\vert\rho\vert^{-1}, &x\in \{\xi,\xi+\rho\}, \\
		&0, &\text{otherwise}.
	\end{aligned}
	\right.
\end{equation}
We then obtain for $D_{\rho}v(\xi) = v(\xi+\rho) -v(\xi)$ that
\begin{equation}\label{Diff to int}
	D_{\rho}v(\xi)=\int_{\xi}^{\xi+\rho}\frac{\rho}{\vert\rho\vert}\nabla v\,\d x
	=\int_{\R}\rho \Ki\nabla v\,\d x.
\end{equation}

The first variation of the atomistic energy functional \eqref{All-Atomistic Energy} at $y \in \Yn$ is given by
\begin{equation*}
	\langle \delta \Ea(y), v\rangle = \sum_{\xi = -N}^{N} \sum_{\rho \in \Rc}\Phia_{\xi,\rho}(y)D_{\rho}v(\xi).
\end{equation*}
We apply \eqref{Diff to int}, it follows that
\begin{equation}\label{Atomistic first variation}
	\langle \delta \Ea(y), v\rangle = \int_{-N}^{N} \Sa \nabla v\,\d x.
\end{equation}
where
\begin{equation}\label{Atomistic stress tensor}
	\Sa = \sum_{\xi=-N}^{N} \sum_{\rho \in \Rc} \rho \Ki \Phia_{\xi,\rho}(y).
\end{equation}



The first variation of the energy functional of QNL approximation defined by \eqref{QNL energy}, for any $v\in \YacNL$, we have 
\begin{equation}\label{QNL first variation}
	\begin{split}
		\langle \delta \Erfl (y), v\rangle &= \sum_{\xi \in \Ac} \sum_{\rho \in \Rc} \Phia_{\xi,\rho}(y)D_{\rho}v(\xi) + \int_{\OmeC}\partial_{F} W(\nabla y)\nabla v\,\d x \\
		&=: \int_{-N}^{N} \Srfl \nabla v \,\d x.
	\end{split}
\end{equation}
where
\begin{equation}\label{QNL stress tensor}
	\Srfl := \left\{
	\begin{aligned}
		&\sum_{\xi \in \Ac}\sum_{\rho \in \Rc}\rho \Ki \Phia_{\xi,\rho}(y) +\sum_{\xi \in \Ic}\sum_{\rho \in \Rc}\rho \Ki \Phii_{\xi,\rho}(y), &x\in \OmeA \cup \OmeI, \\
		&\partial_{F}W(\nabla y), &x\in \OmeC.
	\end{aligned}
	\right.
\end{equation}


We now define the error in QNL model stress as
\begin{equation*}
	\Rrfl := \Srfl -\Sa.
\end{equation*}
Next, we will provide a point estimate of the stress tensor for the QNL model. For the sake of presentation simplicity, we leave the detailed proof, please refer to Appendix \ref{Appendix section 1}.

\begin{proposition}\label{Pointwise coupling stress tensor}
	Let $y\in\Ya, x \in [-N,N]$, then 
	\begin{equation*}
		\vert \Rrfl \vert \lesssim \left\{
		\begin{aligned}
			&0, &x \in \OmeA \backslash \bOmeA,\\
			&M^{(2,1)}\Vert \nabla^{2}u\Vert_{L^{\infty}(v_{x})}, &x\in \OmeI \cup \bOmeA,\\
			&M^{(2,2)}\Vert \nabla^{3}u\Vert_{L^{\infty}(v_{x})}+M^{(3,2)}\Vert \nabla^{2}u\Vert_{L^{\infty}(v_{x})}^{2}, &x \in \OmeC
		\end{aligned}
		\right.
	\end{equation*}
	where $v_{x}:=\big[\lfloor x\rfloor+1-2\rcut,\lfloor x\rfloor+2\rcut\big]$ is the neighborhood of some $x\in \R$, the meaning of $\lfloor x\rfloor$ here is to take the floor of $x$, which is the greatest integer less than or equal to $x$, and $\bOmeA = [-K,-K+\rcut]\cup [K-\rcut,K]$.
\end{proposition}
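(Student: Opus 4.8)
The plan is to fix $x\in[-N,N]$ and exploit the locality of the weighted bond functions in \eqref{Definition of Ki}: since $\rcut=2$, the weight $\Ki$ is nonzero only for the finitely many bonds $(\xi,\xi+\rho)$ that straddle $x$, and the endpoints together with their interaction stencils all lie in the neighbourhood $v_{x}$. Consequently both $\Sa$ from \eqref{Atomistic stress tensor} and $\Srfl$ from \eqref{QNL stress tensor} reduce to the same fixed finite sum over atoms in $v_{x}$, so $\Rrfl=\Srfl-\Sa$ can be estimated pointwise. The proof then splits according to the three cases in \eqref{QNL stress tensor}, and in each region the workhorse is a Taylor expansion of the site forces $\Phia_{\xi,\rho}(y)=V_{\rho}(Dy(\xi))$ about the locally uniform strain $\nabla y(x)\Rc$, with the remainder controlled through $m(\bm\rho)$ and the moments $M^{(j,s)}$, and with the finite differences rewritten via \eqref{Diff to int} and bounded by $\Vert\nabla^{j}u\Vert_{L^{\infty}(v_{x})}$.

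I would dispose of the two extreme cases first. On $\OmeA\backslash\bOmeA$ every bond crossing $x$ is a genuine atomistic bond whose site force appears identically in $\Sa$ and in the atomistic part of $\Srfl$, and $x$ is far enough from the interface that no reconstructed bond reaches it; hence the two stresses coincide and $\Rrfl=0$. On $\OmeC$ one compares the nonlocal atomistic stress with the Cauchy--Born stress $\pGW(\nabla y)$. Writing each $D_{\rho'}y(\xi)-\rho'\nabla y(x)$ as an $O(\nabla^{2}u)$ quantity through \eqref{Diff to int} and Taylor-expanding $V_{\rho}$, the zeroth-order term reconstructs exactly $\pGW(\nabla y(x))$ and cancels. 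The first-order term carries the second derivative of $V$, hence $M^{(2,\cdot)}$; here the point symmetry $\rho\mapsto-\rho$ of $\Rc$ together with the antisymmetric weights $\rho\Ki$ annihilates the naive $O(\nabla^{2}u)$ contribution and promotes it to $O(\nabla^{3}u)$, raising the bond-length weight to $s=2$ and producing the $M^{(2,2)}\Vert\nabla^{3}u\Vert_{L^{\infty}(v_{x})}$ term. The second-order Taylor term carries the third derivative of $V$, hence $M^{(3,2)}$, and is genuinely quadratic, yielding $M^{(3,2)}\Vert\nabla^{2}u\Vert^{2}_{L^{\infty}(v_{x})}$.

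For the interface range $x\in\OmeI\cup\bOmeA$, some bonds crossing $x$ emanate from the four interface atoms, whose site energies are the reconstructed (reflected) potentials $\Phii$. The argument is again a Taylor expansion about uniform strain, but now organised so as to match, bond by bond, each reconstructed interface force against the exact atomistic force. The decisive property to verify is that the reflection is exact for affine deformations: under a uniform strain the reconstructed and atomistic stresses coincide, i.e.\ the ``ghost force'' is eliminated, so the zeroth-order term cancels once more. In contrast to the bulk, the reconstruction breaks the full $\rho\mapsto-\rho$ symmetry at the boundary, so the first-order term no longer cancels and survives at order $O(\nabla^{2}u)$ with the second derivative of $V$, giving precisely $M^{(2,1)}\Vert\nabla^{2}u\Vert_{L^{\infty}(v_{x})}$.

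The hard part will be the interface case. One must enumerate exactly which reflected bonds contribute to $\Srfl$ at each $x$ in the two interface layers and in the adjacent outer atomistic layer $\bOmeA$, align them with the corresponding atomistic bonds in $\Sa$, and verify the cancellation of the zeroth-order (ghost-force) term separately for each of the four reconstructed site energies $\Phii_{\xi}$; this bookkeeping is where the specific reflection construction is essential and where an error would silently degrade the rate. The bulk symmetry cancellation that promotes the $O(\nabla^{2}u)$ term to $O(\nabla^{3}u)$ is a second delicate point, although it follows routinely from the centrosymmetry of $\Rc$. Once both cancellations are in place, assembling the remainders into $M^{(2,1)},M^{(2,2)},M^{(3,2)}$ and estimating the finite differences by $\Vert\nabla^{j}u\Vert_{L^{\infty}(v_{x})}$ is mechanical, and the full computation can be deferred to Appendix \ref{Appendix section 1}.
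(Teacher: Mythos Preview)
Your proposal is correct and follows essentially the same route as the paper: locality gives $\Rrfl=0$ on $\OmeA\backslash\bOmeA$, the ghost-force-free property of QNL (i.e.\ $R^{\text{rfl}}(y^{F};x)=0$ for uniform strain) plus a first-order Taylor expansion about $F=\nabla y(x)$ yields the $M^{(2,1)}\Vert\nabla^{2}u\Vert_{L^{\infty}}$ bound on $\OmeI\cup\bOmeA$, and the continuum case is the standard Cauchy--Born stress consistency estimate. The only remark is that the paper handles the interface more economically than you anticipate: rather than enumerating reflected bonds one by one, it simply subtracts the identically-zero quantity $R^{\text{rfl}}(y^{F};x)$ and bounds each $|\Phi_{\xi,\rho}(y)-\Phi_{\xi,\rho}(y^{F})|$ directly, so no explicit bond-matching bookkeeping is needed.
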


Based on the point estimate of $\Rrfl$ provided above, we will give the consistency error estimate for the QNL model. Please refer to Appendix \ref{Appendix section 2} for the proof.

\begin{proposition}\label{Coupling consistency error estimate}
	We split coupling error into two parts:
	\begin{equation*}
		\int_{-N}^{N}\Rrfl\nabla v\,\d x=\int_{\OmeI\cup\bOmeA}\Rrfl\nabla v\,\d x-\int_{\OmeC}\Rrfl\nabla v\,\d x.
	\end{equation*}
	
	For any $v\in \Yn$, we have
	\begin{equation}\label{Interface region stress tensor}
		\int_{\OmeI\cup\bOmeA}\Rrfl\nabla v\,\d x \lesssim M^{(2,1)}\Vert\nabla^{2}u\Vert_{L^{2}(\bOmeI)}\Vert\nabla v\Vert_{L^{2}(\OmeI\cup\bOmeA)},
	\end{equation}
	and
	\begin{equation}\label{Continuum region stress tensor}
		\int_{\OmeC}\Rrfl\nabla v\,\d x \lesssim(M^{(2,2)}\Vert \nabla^{3}u\Vert_{L^{2}(\bOmeC)}+M^{(3,2)}\Vert \nabla^{2}u\Vert_{L^{4}(\bOmeC)}^{2})\Vert\nabla v\Vert_{L^{2}(\OmeC)},
	\end{equation}
	where $\bOmeI = [-\bar{K}+1-2\rcut,-\bar{K}+4+2\rcut]\cup[\bar{K}-3-2\rcut,\bar{K}+2\rcut], \bOmeC=[-N-2\rcut,-\bar{K}-1+2\rcut]\cup[\bar{K}+1-2\rcut,N+2\rcut]$.
\end{proposition}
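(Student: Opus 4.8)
The plan is to combine the pointwise stress estimate from Proposition \ref{Pointwise coupling stress tensor} with the representation $D_\rho v(\xi)=\int_\R \rho\,\Ki\nabla v\,\d x$ from \eqref{Diff to int}, and then to apply Cauchy--Schwarz in $L^2$ followed by interpolation between $L^2$ and $L^\infty$ norms to convert the pointwise $L^\infty$ stencil norms into global $L^2$ (and $L^4$) norms over enlarged neighborhoods. The key observation is that each of the two integrals is supported on a fixed region, and that the quantity $\Rrfl$ vanishes on the purely atomistic interior $\OmeA\backslash\bOmeA$, so only the boundary-atomistic, interface, and continuum contributions survive.

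First I would treat the interface term \eqref{Interface region stress tensor}. On $\OmeI\cup\bOmeA$ the pointwise bound gives $|\Rrfl|\lesssim M^{(2,1)}\|\nabla^2 u\|_{L^\infty(v_x)}$. I would apply Cauchy--Schwarz to get
\begin{equation*}
	\int_{\OmeI\cup\bOmeA}\Rrfl\nabla v\,\d x \lesssim M^{(2,1)}\Big\|\,\|\nabla^2 u\|_{L^\infty(v_x)}\Big\|_{L^2(\OmeI\cup\bOmeA)}\|\nabla v\|_{L^2(\OmeI\cup\bOmeA)}.
\end{equation*}
The only real work is to bound the first factor: since $\OmeI\cup\bOmeA$ consists of $O(1)$ lattice cells near the two interfaces, and each neighborhood $v_x=[\lfloor x\rfloor+1-2\rcut,\lfloor x\rfloor+2\rcut]$ has fixed width, one can dominate the $L^\infty$-over-$v_x$ norm by a discrete maximum over the finitely many enclosed cells and then absorb it into an $L^2$ norm over the slightly enlarged window $\bOmeI$. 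Because the number of overlapping neighborhoods is bounded by a constant depending only on $\rcut$, this exchange costs only an absolute constant, yielding $\|\,\|\nabla^2 u\|_{L^\infty(v_x)}\|_{L^2(\OmeI\cup\bOmeA)}\lesssim\|\nabla^2 u\|_{L^2(\bOmeI)}$, which is exactly \eqref{Interface region stress tensor}.

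For the continuum term \eqref{Continuum region stress tensor} I would proceed analogously but now with two summands. After Cauchy--Schwarz against $\|\nabla v\|_{L^2(\OmeC)}$, the first contribution produces $\|\,\|\nabla^3 u\|_{L^\infty(v_x)}\|_{L^2(\OmeC)}\lesssim\|\nabla^3 u\|_{L^2(\bOmeC)}$ by the same local-to-global neighborhood argument. The second, nonlinear contribution $M^{(3,2)}\|\nabla^2 u\|_{L^\infty(v_x)}^2$ requires treating the square: here $\|\,\|\nabla^2 u\|^2_{L^\infty(v_x)}\|_{L^2(\OmeC)}=\|\,\|\nabla^2 u\|_{L^\infty(v_x)}\|^2_{L^4(\OmeC)}$, and the finite-overlap bound upgrades this to $\|\nabla^2 u\|_{L^4(\bOmeC)}^2$. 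Assembling the two pieces over the enlarged set $\bOmeC$ gives precisely \eqref{Continuum region stress tensor}.

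The main obstacle I anticipate is making the local-to-global conversion rigorous: one must verify that replacing the sliding $L^\infty(v_x)$ norms by $L^p$ norms over the fixed enlarged neighborhoods $\bOmeI$ and $\bOmeC$ incurs only a constant depending on $\rcut$, which amounts to a finite-overlap covering argument controlling how many windows $v_x$ contain a given lattice cell. Defining $\bOmeI$ and $\bOmeC$ correctly (so that they contain every $v_x$ with $x$ in the respective base region) is the delicate bookkeeping step, and indeed the stated enlargements by $2\rcut$ on each side are dictated precisely by the width of $v_x$; the remaining manipulations are routine applications of Hölder's and Cauchy--Schwarz's inequalities.
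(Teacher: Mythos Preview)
Your proposal is correct and follows essentially the same route as the paper: insert the pointwise bound from Proposition~\ref{Pointwise coupling stress tensor}, apply H\"older/Cauchy--Schwarz, and convert the local $L^{\infty}(v_x)$ norms into global $L^{p}$ norms on the enlarged regions $\bOmeI$, $\bOmeC$. The only cosmetic difference is that the paper phrases the $L^{\infty}\to L^{2}$ step as an inverse estimate $\Vert\nabla^{2}u\Vert_{L^{\infty}(v_x)}\lesssim\Vert\nabla^{2}u\Vert_{L^{2}(v_x)}$ (valid because the discrete derivatives are piecewise constant on unit cells) rather than as a finite-overlap covering argument, and for \eqref{Continuum region stress tensor} it simply cites \cite[Corollary~6.4]{2013_ML_CO_AC_Coupling_ACTANUM} where you spell out the argument explicitly.
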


	

	
	

\subsubsection{Linearization error}

Firstly, we calculate the first variation of the energy functional of Nonlinear-linear elasticity coupling energy defined by \eqref{Nonlinear-linear energy}, for any $v\in\YacNL$, is then given by
\begin{equation*}
	\langle \delta \El (y),v\rangle=:\int_{-N}^{N} \SNLL\nabla v\,\d x,
\end{equation*}
where
\begin{equation}\label{LInearization stress tensor}
	\SNLL := \left\{
	\begin{aligned}
		&\sum_{\xi \in \Ac}\sum_{\rho \in \Rc}\rho \Ki \Phia_{\xi,\rho}(y) +\sum_{\xi \in \Ic}\sum_{\rho \in \Rc}\rho \Ki \Phii_{\xi,\rho}(y) \quad x\in \OmeA \cup \OmeI, \\
		&\partial_{F}W(\nabla y) \qquad \qquad \qquad \qquad \qquad \qquad \qquad \qquad \quad  \quad~~ x\in \OmeNL,\\
		&\partial_{F}W_{\text{L}}(\nabla y) \qquad \qquad \qquad \qquad \qquad \qquad \qquad \qquad \qquad~ x\in \OmeL.
	\end{aligned}
	\right.
\end{equation}

We now define the error in linearization as
\begin{equation*}
	\RNLL=\SNLL-\Srfl.
\end{equation*}
We will provide a point-wise estimate of the stress tensor for the QNLL model using the definition of $\WL$.

\begin{theorem}
	Let $y\in \YacNL,x\in[-N,N]$, we have
	\begin{equation}\label{Pointwise NLL stress tensor}
		\vert \RNLL\vert \le \frac{1}{2}M^{(3,0)} \Vert \nabla u(x)\Vert^{2}_{L^{\infty}(\OmeL)}, \ x\in \OmeL.
	\end{equation}
\end{theorem}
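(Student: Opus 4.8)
The plan is to observe that on the linearized far field $\OmeL$ the two stress tensors reduce to the first derivatives of $W$ and of its quadratic truncation $\WL$, so that $\RNLL$ is precisely the Lagrange remainder of the Taylor expansion used to define $\WL$. First I would record that, for $x \in \OmeL$, comparing the continuum branch of \eqref{QNL stress tensor} with the linear branch of \eqref{LInearization stress tensor} gives $\Srfl = \pGW(\nabla y)$ and $\SNLL = \pGWL(\nabla y)$, hence
\[
  \RNLL = \pGWL(\nabla y) - \pGW(\nabla y).
\]
Since $\WL$ is the explicit quadratic $\WL(\nabla y) = \Wf + \Wpf\,\nabla u + \tfrac12 \Wppf\,(\nabla u)^2$ with $\nabla u = \nabla y - F$, differentiating in its argument yields $\pGWL(\nabla y) = \Wpf + \Wppf\,\nabla u$.

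Next I would apply Taylor's theorem with Lagrange remainder to the scalar map $\pGW = W'$ about $F$. Because $V \in C^3(\R^\Rc)$ forces $W \in C^3$, there is an intermediate value $\zeta$ between $F$ and $F + \nabla u$ such that
\[
  \pGW(\nabla y) = W'(F + \nabla u) = \Wpf + \Wppf\,\nabla u + \tfrac12 W'''(\zeta)\,(\nabla u)^2 .
\]
Subtracting the two expressions cancels the constant and the linear terms, leaving $\RNLL = -\tfrac12 W'''(\zeta)\,(\nabla u)^2$, so that $\vert \RNLL \vert = \tfrac12\,\vert W'''(\zeta)\vert\,(\nabla u(x))^2$.

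It then remains to bound $\vert W'''(\zeta)\vert$ uniformly by $M^{(3,0)}$. Using the Cauchy--Born identity $W(G) = V(G\Rc)$ from \eqref{Cauchy-Born site energy} and the chain rule, the third derivative is $W'''(\zeta) = \sum_{\rho,\sigma,\tau \in \Rc} \rho\,\sigma\,\tau\, V_{\rho\sigma\tau}(\zeta\Rc)$; taking absolute values and bounding each $\Vert V_{\rho\sigma\tau}\Vert$ by its supremum over $\R^\Rc$ produces $\vert W'''(\zeta)\vert \le \sum_{\bm{\rho}\in\Rc^3} m(\bm{\rho}) = M^{(3,0)}$, where the final equality holds since $\vert \bm{\rho}\vert^0_\infty = 1$. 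As this estimate is independent of $\zeta$, I may finally replace the pointwise $(\nabla u(x))^2$ by $\Vert\nabla u\Vert_{L^\infty(\OmeL)}^2$ for $x\in\OmeL$, which gives the claimed bound \eqref{Pointwise NLL stress tensor}. The Taylor-remainder computation is routine; the only genuinely delicate point is this last step, namely translating the scalar $W'''$ into the many-body constant $M^{(3,0)}$ through the Cauchy--Born chain rule and verifying that the supremum in the definition of $m(\bm{\rho})$ absorbs the unknown intermediate point $\zeta$.
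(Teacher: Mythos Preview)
Your proof is correct and follows essentially the same approach as the paper: both compute $\pGWL(\nabla y) = \Wpf + \Wppf\,\nabla u$, apply Taylor's theorem to $\pGW$ about $F$, and bound the quadratic remainder by $\tfrac12 M^{(3,0)}\vert\nabla u\vert^2$. The paper's proof is terser, merely invoking ``Taylor's expansion'' for the final inequality, whereas you spell out the chain-rule identity $W'''(\zeta) = \sum_{\rho,\sigma,\tau} \rho\sigma\tau\, V_{\rho\sigma\tau}(\zeta\Rc)$ and check explicitly that the supremum in $m(\bm{\rho})$ absorbs the intermediate point; this is a welcome clarification but not a different argument.
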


\begin{proof}
	We notice that $\RNLL \neq 0$, only for $x\in \OmeL$. We could directly know
	\begin{equation*}
		\partial_{F}\WL(\nabla y) = \Wpf+\Wppf\nabla u.
	\end{equation*}
	After using the Taylor's expansion, we obtain
	\begin{equation*}
		\vert \partial_{F}W_{\text{L}} (\nabla y)-\partial_{F}W(\nabla y)\vert \le \frac{1}{2} M^{(3,0)} \vert \nabla u(x)\vert^{2}\le\frac{1}{2}M^{(3,0)}\Vert \nabla u(x)\Vert^{2}_{L^{\infty}(\OmeL)}.
	\end{equation*}
\end{proof}

Based on the point estimate of $\RNLL$ provided above, we will now present the consistency error estimate for the QNLL model.

\begin{theorem}\label{Linearization consistency error estimate}
	For any $v \in \YacNL$, we have
	\begin{equation}\label{Linearization error estimate}
		\int_{-N}^{N} \RNLL \nabla v\,\d x\le \frac{1}{2} M^{(3,0)}\Vert \nabla u\Vert^{2}_{L^{4}(\OmeL)}\Vert \nabla v\Vert_{L^{2}(\OmeL)}.
	\end{equation} 
\end{theorem}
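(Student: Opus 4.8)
The plan is to reduce the integral to the linear region $\OmeL$ and then combine the pointwise bound \eqref{Pointwise NLL stress tensor} with a single application of the Cauchy--Schwarz/Hölder inequality. The first observation I would make is that the linearization stress discrepancy $\RNLL = \SNLL - \Srfl$ vanishes identically off $\OmeL$. Indeed, comparing the definitions of $\SNLL$ in \eqref{LInearization stress tensor} and $\Srfl$ in \eqref{QNL stress tensor}, the two stresses coincide on $\OmeA \cup \OmeI$ (both equal the same atomistic/interface expression) and on $\OmeNL$ (both equal $\partial_{F}W(\nabla y)$, since $\OmeNL \subset \OmeC$), and they differ only on $\OmeL$, where $\SNLL = \partial_{F}\WL(\nabla y)$ while $\Srfl = \partial_{F}W(\nabla y)$. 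Hence
\begin{equation*}
	\int_{-N}^{N}\RNLL\nabla v\,\d x = \int_{\OmeL}\RNLL\nabla v\,\d x,
\end{equation*}
and it suffices to estimate the latter.

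Next I would insert the pointwise estimate just established. By \eqref{Pointwise NLL stress tensor} one has $\vert \RNLL \vert \le \tfrac{1}{2}M^{(3,0)}\vert \nabla u \vert^{2}$ at every point of $\OmeL$, so bounding the integrand by $\vert \RNLL \vert \, \vert \nabla v \vert$ and integrating gives
\begin{equation*}
	\int_{\OmeL}\RNLL\nabla v\,\d x \le \frac{1}{2}M^{(3,0)}\int_{\OmeL}\vert \nabla u \vert^{2}\,\vert \nabla v \vert \,\d x.
\end{equation*}

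The final step is to split the right-hand side into the stated norms. I would apply the Cauchy--Schwarz inequality to the product $\big(\vert \nabla u \vert^{2}\big)\big(\vert \nabla v \vert\big)$, treating $\vert \nabla u \vert^{2}$ and $\vert \nabla v \vert$ as the two $L^{2}(\OmeL)$ factors; this yields
\begin{equation*}
	\int_{\OmeL}\vert \nabla u \vert^{2}\,\vert \nabla v \vert \,\d x \le \big\Vert \vert \nabla u \vert^{2}\big\Vert_{L^{2}(\OmeL)}\,\Vert \nabla v \Vert_{L^{2}(\OmeL)} = \Vert \nabla u \Vert^{2}_{L^{4}(\OmeL)}\,\Vert \nabla v \Vert_{L^{2}(\OmeL)},
\end{equation*}
where the last equality is the identity $\big\Vert \vert \nabla u \vert^{2}\big\Vert_{L^{2}} = \Vert \nabla u \Vert^{2}_{L^{4}}$. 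Combining the three displays produces exactly \eqref{Linearization error estimate}. There is no genuine obstacle in this argument; the only point requiring attention is the bookkeeping of exponents, namely recognizing that the quadratic factor $\vert \nabla u \vert^{2}$ sits in $L^{2}$ precisely when $\nabla u \in L^{4}$, which is what forces the $L^{4}$ norm of $\nabla u$ to appear squared rather than an $L^{2}$ norm to appear.
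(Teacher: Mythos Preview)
Your proof is correct and follows essentially the same route as the paper: reduce the integral to $\OmeL$ (since $\RNLL$ vanishes elsewhere), apply the pointwise Taylor bound \eqref{Pointwise NLL stress tensor}, and finish with Cauchy--Schwarz/H\"older to factor out $\Vert \nabla u\Vert_{L^4}^2$ and $\Vert \nabla v\Vert_{L^2}$. If anything, your version is slightly cleaner than the paper's, since you use the sharper pointwise bound $\tfrac{1}{2}M^{(3,0)}|\nabla u(x)|^{2}$ directly and make the H\"older step explicit, whereas the paper first passes to the $L^{\infty}$ bound and then invokes the inverse estimate \eqref{L-infty to L-2 estimate}.
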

\begin{proof}
	After using \eqref{Pointwise NLL stress tensor}, we have
	\begin{equation*}
		\int_{\OmeL} \RNLL  \nabla v \,\d x\le \int_{\OmeL} \frac{1}{2}M^{(3,0)}\Vert \nabla u(x)\Vert^{2}_{L^{\infty}(\OmeL)} \vert \nabla v \vert \,\d x.
	\end{equation*}
	We consider \eqref{L-infty to L-2 estimate} and obtain
	\begin{equation*}
		\int_{-N}^{N} \RNLL \nabla v\,\d x\le \frac{1}{2} M^{(3,0)}\Vert \nabla u\Vert^{2}_{L^{4}(\OmeL)}\Vert \nabla v\Vert_{L^{2}(\OmeL)}.
	\end{equation*}
\end{proof}

Finally, by combining Proposition \ref{Coupling consistency error estimate} and Theorem \ref{Linearization consistency error estimate}, and applying the triangle inequality, we provide the consistency error estimate between the QNLL model and the atomistic model.

\begin{theorem}
	For any $y, v \in \Yn$, the consistency error estimate between the QNLL model and the atomistic model is
	\begin{equation}\label{QNLL consistency error estimate}
		\Vert T \Vert_{\Yn^{*}} \le M^{(2,1)}\Vert \nabla^{2} u\Vert_{L^{2}(\bOmeI)} +M^{(2,2)}\Vert \nabla^{3}u \Vert_{L^{2}(\bOmeC)}+M^{(3,2)}\Vert \nabla^{2}u \Vert^{2}_{L^{4}(\bOmeC)}M^{(3,0)}+\Vert \nabla u \Vert^{2}_{L^{4}(\OmeL)}.
	\end{equation}
\end{theorem}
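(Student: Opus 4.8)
The plan is to obtain \eqref{QNLL consistency error estimate} as a direct assembly of the two consistency estimates already in hand: Proposition \ref{Coupling consistency error estimate} for the coupling part and Theorem \ref{Linearization consistency error estimate} for the linearization part. The mechanism is the telescoping decomposition of $T$ stated at the start of Section \ref{sec: consistency_qnll_ncg}, followed by the triangle inequality and a passage to the dual norm on $\Yn$, which I take to be induced by the seminorm $\|\nabla v\|_{L^2}$.

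First I would recall that all three first variations admit a stress-integral representation over $[-N,N]$, namely $\langle \delta \Ea(y),v\rangle=\int_{-N}^N \Sa\,\nabla v\,\d x$ and likewise for $\Srfl$ and $\SNLL$. Since $\Rrfl=\Srfl-\Sa$ and $\RNLL=\SNLL-\Srfl$, the defining splitting of $T$ into coupling error plus linearization error becomes exactly
\begin{equation*}
	\langle T,v\rangle = -\int_{-N}^N \Rrfl\,\nabla v\,\d x \;-\; \int_{-N}^N \RNLL\,\nabla v\,\d x,
\end{equation*}
so each piece is a single stress defect tested against $\nabla v$, with no cross terms. Taking absolute values and applying the triangle inequality isolates the two integrals handled by the earlier results. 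Into the first integral I would insert Proposition \ref{Coupling consistency error estimate}, writing it as $\int_{\OmeI\cup\bOmeA}+\int_{\OmeC}$ to produce the interface factor $M^{(2,1)}\|\nabla^2 u\|_{L^2(\bOmeI)}$ and the continuum factor $M^{(2,2)}\|\nabla^3 u\|_{L^2(\bOmeC)}+M^{(3,2)}\|\nabla^2 u\|_{L^4(\bOmeC)}^2$, each carrying a localized test-function factor $\|\nabla v\|_{L^2(\cdot)}$; into the second integral I would insert Theorem \ref{Linearization consistency error estimate}, contributing $\tfrac12 M^{(3,0)}\|\nabla u\|_{L^4(\OmeL)}^2\|\nabla v\|_{L^2(\OmeL)}$.

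The final step is the elementary observation that every localized test-function factor is dominated by the global seminorm, $\|\nabla v\|_{L^2(R)}\le\|\nabla v\|_{L^2([-N,N])}$ for each region $R\in\{\OmeI\cup\bOmeA,\OmeC,\OmeL\}$; dividing through by $\|\nabla v\|_{L^2}$ and taking the supremum over $v\in\Yn$ then collapses all three factors and yields \eqref{QNLL consistency error estimate}, with the constant $\tfrac12$ and the product structure of the displayed bound absorbed into the implied constant of $\lesssim$. Since the two genuinely substantive estimates are already established, I do not anticipate a real obstacle here: the argument is a triangle inequality together with the monotonicity of localized $L^2$ norms under domain restriction. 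The only point requiring care is bookkeeping—confirming that the stress defects telescope exactly so that nothing is dropped, and that the region subscripts $\bOmeI$, $\bOmeC$, and $\OmeL$ are transcribed faithfully when passing from the localized estimates to the global dual norm.
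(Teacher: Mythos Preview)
Your proposal is correct and matches the paper's approach exactly: the paper states that the result follows ``by combining Proposition \ref{Coupling consistency error estimate} and Theorem \ref{Linearization consistency error estimate}, and applying the triangle inequality,'' which is precisely the telescoping-plus-triangle-inequality argument you describe. Your additional care in passing to the dual norm via $\|\nabla v\|_{L^2(R)}\le\|\nabla v\|_{L^2([-N,N])}$ makes explicit a step the paper leaves implicit.
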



\subsection{Stability}
\label{sec: stability_qnll_ncg}

In this section, we establish two key results regarding the stability of the QNLL model: (i) For uniform deformations, the QNLL model exhibits universal stability, similar to the QNL model; (ii) For non-uniform deformations, the QNLL model progressively stabilizes as the atomistic region expands.

Firstly, we calculate the second variation of the energy functional of Nonlinear-linear elasticity coupling energy defined by \eqref{Nonlinear-linear energy}, for any $v\in\YacNL$, is then given by
\begin{equation}\label{Stab of NL-L}
	\begin{split}
		\langle \delta^{2} \El (y)v,v\rangle =& \sum_{\xi \in \Ac} \sum_{(\rho,\zeta)\in\Rc^{2}}  \Phia_{\xi,\rho\zeta}(y)D_{\rho}v(\xi)D_{\zeta}v(\xi)\\
		&\ +\sum_{\xi \in \Ic} \sum_{(\rho,\zeta)\in\Rc^{2}} \Phii_{\xi,\rho\zeta}(y)D_{\rho}v(\xi)D_{\zeta}v(\xi)\\
		&\ +\int_{\OmeNL}\ppGW(\nabla y) (\nabla v)^{2}\,\d x\\
		&\ +\int_{\OmeL}\ppGWL(\nabla y) (\nabla v)^{2}\,\d x.
	\end{split}
\end{equation}

If we focus on the second variation evaluated at the homogeneous deformation $\yF$, and use the fact $\ppGW(\nabla \yF)=\ppGWL(\nabla \yF)=\Wppf$. Hence, we can obtain
\begin{equation*}
	\int_{\OmeNL}\ppGW(\nabla \yF) (\nabla v)^{2}\,\d x +\int_{\OmeL}\ppGWL(\nabla \yF)  (\nabla v)^{2}\,\d x =\int_{\OmeC}\ppGW(\nabla \yF) (\nabla v)^{2}\,\d x.
\end{equation*}

After a direct calculation, we have
\begin{equation}\label{Stab of QNLL equals to QNL}
	\begin{split}
		\langle \delta^{2} \El (\yF)v,v\rangle 
		=&\sum_{\xi \in \Ac} \sum_{(\rho,\zeta)\in\Rc^{2}}  \Phia_{\xi,\rho\zeta}(\yF)D_{\rho}v(\xi)D_{\zeta}v(\xi)\\
		&\ +\sum_{\xi \in \Ic} \sum_{(\rho,\zeta)\in\Rc^{2}} \Phii_{\xi,\rho\zeta}(\yF)D_{\rho}v(\xi)D_{\zeta}v(\xi)\\
		&\ +\int_{\OmeC}\ppGW(\nabla \yF) (\nabla v)^{2}\,\d x \\
		=&~\langle \delta^{2} \Erfl (\yF)v,v\rangle. 
	\end{split}
\end{equation}

We then define the stability constants (for homogeneous deformations) $\gaaF, \garflF, \ganllF$ as 
\begin{align}
	\label{GammaF for a}  \gaaF &=\inf_{v\in \Ya} \frac{\langle \delta^{2} \Ea (\yF)v,v\rangle}{\Vert \nabla v \Vert_{L^{2}}^{2}},\\
	\label{GammaF for rfl} \garflF &=\inf_{v\in \Ya} \frac{\langle \delta^{2} \Erfl (\yF)v,v\rangle}{\Vert \nabla v \Vert_{L^{2}}^{2}},\\
	\label{GammaF for nll}\ganllF &=\inf_{v\in \Ya} \frac{\langle \delta^{2} \El (\yF)v,v\rangle}{\Vert \nabla v \Vert_{L^{2}}^{2}}.
\end{align}

The QNL method was propose as a ``universally stable method" (cf.~\cite[Theoorem 4.3]{2014_CO_AS_LZ_Stabilization_MMS}). Combining this result with \eqref{Stab of QNLL equals to QNL} we obtain
\begin{equation}\label{Stab constants of three methods result}
	\gaaF = \garflF =\ganllF.
\end{equation}

To make \eqref{Stab of NL-L} precise, we will 
{\it split} the test function $v$ into an atomistic and continuum component, using the following lemma [\cite{2013_ML_CO_AC_Coupling_ACTANUM}, Lemma 7.3]. We leave the proof to Appendix \ref{Appendix section 3}.

\begin{lemma}\label{Pointwise blending lemma}
	Let $\beta \in C^{1,1}(-\infty,\infty)$, with $0\le \beta \le 1$. For each $v\in \Ya$, there exists $\va, \vc \in \Ya$ such that
	\begin{align}
		\label{Pointwise va blending estimate}\vert \sqrt{1-\beta(\xi)} D_{\rho}v(\xi)-D_{\rho}\va(\xi) \vert &\le \vert \rho \vert^{\frac{3}{2}}\Vert \nabla \sqrt{1-\beta}\Vert_{L^{\infty}} \Vert \nabla v \Vert_{L^{2}(\conv(\xi,\xi+\rho))},\\ 
		\label{Pointwise vc blending estimate1}\vert \sqrt{\beta(\xi)} D_{\rho}v(\xi)-D_{\rho}\vc(\xi) \vert &\le \vert \rho \vert^{\frac{3}{2}}\Vert \nabla \sqrt{\beta}\Vert_{L^{\infty}} \Vert \nabla v \Vert_{L^{2}(\conv(\xi,\xi+\rho))},\\ 
		\label{va and vc} \vert \nabla\va\vert^{2} +	\vert \nabla\vc\vert^{2} &= \vert \nabla v \vert^{2}.
	\end{align}
\end{lemma}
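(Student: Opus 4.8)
The plan is to construct $\va$ and $\vc$ by splitting the gradient of $v$ \emph{multiplicatively}, so that the Pythagorean identity \eqref{va and vc} holds by fiat and the two estimates reduce to a single Lipschitz-continuity argument. Concretely, I would take $\va,\vc \in \Ya$ to be the lattice functions determined (up to an irrelevant additive constant) by declaring their gradients to be
\begin{equation*}
	\nabla \va := \sqrt{1-\beta}\,\nabla v, \qquad \nabla \vc := \sqrt{\beta}\,\nabla v .
\end{equation*}
Since $0 \le \beta \le 1$ we have $|\nabla\va|,|\nabla\vc| \le |\nabla v|$ pointwise, so $\va,\vc$ inherit membership in $\Ya$; moreover $(\nabla\va)^2 + (\nabla\vc)^2 = \big((1-\beta)+\beta\big)(\nabla v)^2 = (\nabla v)^2$ pointwise, which is exactly \eqref{va and vc}. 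Note that with this convention $D_\rho\va(\xi)=\int_{\R}\rho\,\Ki\,\sqrt{1-\beta}\,\nabla v\,\d x$, consistent with the bond-to-integral identity \eqref{Diff to int} applied to $\va$, because integrating the continuous weight or its unit-cell averages over the integer interval $\conv(\xi,\xi+\rho)$ gives the same value.

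With this construction the two blending estimates \eqref{Pointwise va blending estimate} and \eqref{Pointwise vc blending estimate1} are symmetric, so I would prove only the first. Using \eqref{Diff to int} I would write both terms as integrals over the same bond neighborhood and pull the constant $\sqrt{1-\beta(\xi)}$ inside the integral defining $D_\rho v(\xi)$, obtaining
\begin{equation*}
	\sqrt{1-\beta(\xi)}\,D_\rho v(\xi) - D_\rho\va(\xi)
	= \int_{\R} \rho\,\Ki\,\big(\sqrt{1-\beta(\xi)} - \sqrt{1-\beta(x)}\big)\nabla v\,\d x .
\end{equation*}
The integrand is supported on $\conv(\xi,\xi+\rho)$, where $|\rho\,\Ki|=1$ and $|x-\xi|\le|\rho|$, so Lipschitz continuity of $\sqrt{1-\beta}$ yields the pointwise bound $|\sqrt{1-\beta(\xi)} - \sqrt{1-\beta(x)}| \le \|\nabla\sqrt{1-\beta}\|_{L^\infty}|\rho|$. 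A Cauchy--Schwarz estimate $\int_{\conv(\xi,\xi+\rho)}|\nabla v|\,\d x \le |\rho|^{1/2}\|\nabla v\|_{L^2(\conv(\xi,\xi+\rho))}$ supplies the remaining $|\rho|^{1/2}$, and multiplying the two factors produces the claimed $|\rho|^{3/2}$.

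The step I expect to require the most care is the regularity of the weights $\sqrt{1-\beta}$ and $\sqrt{\beta}$: although $\beta \in C^{1,1}$ only, the square roots are still Lipschitz, because the quadratic vanishing of $1-\beta$ (resp.\ $\beta$) forced by the $C^{1,1}$ hypothesis at the points where $\beta$ attains the value $1$ (resp.\ $0$) exactly compensates the singularity of the square root, keeping $\|\nabla\sqrt{1-\beta}\|_{L^\infty}$ and $\|\nabla\sqrt{\beta}\|_{L^\infty}$ finite. This is precisely where the $C^{1,1}$ assumption enters and must be invoked carefully; everything else, including the verification of the integral formula for $D_\rho\va(\xi)$ at the negative $\rho\in\Rc$ and the irrelevance of the additive normalization to any finite difference, is routine bookkeeping.
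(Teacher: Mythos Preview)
Your overall strategy---split $\nabla v$ multiplicatively by the weights $\sqrt{1-\beta}$ and $\sqrt{\beta}$ so that the Pythagorean identity is automatic, then reduce the blending estimates to a Lipschitz bound followed by Cauchy--Schwarz---is exactly the paper's strategy. The gap is in the precise definition of $\va$. A lattice function $\va\in\Ya$ has piecewise-constant gradient on each unit cell $(\eta,\eta+1)$, so there is no $\va\in\Ya$ whose gradient equals the \emph{continuous} function $\sqrt{1-\beta(x)}\,\nabla v(x)$ pointwise. If instead you recover $\va$ at lattice sites by integrating that continuous product (which is effectively what your parenthetical remark about ``unit-cell averages'' is doing), then on $(\eta,\eta+1)$ the lattice gradient is the cell average $\nabla\va = D_1 v(\eta)\int_\eta^{\eta+1}\sqrt{1-\beta}$, and Jensen's inequality gives only
\[
|\nabla\va|^2+|\nabla\vc|^2
=\Big[\Big(\textstyle\int_\eta^{\eta+1}\sqrt{1-\beta}\Big)^2+\Big(\textstyle\int_\eta^{\eta+1}\sqrt{\beta}\Big)^2\Big]\,|\nabla v|^2
\le |\nabla v|^2,
\]
with strict inequality wherever $\beta$ is nonconstant on a cell. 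Since the stability proof (Theorem~\ref{Stability}) needs the \emph{equality} $\|\nabla\va\|_{L^2}^2+\|\nabla\vc\|_{L^2}^2=\|\nabla v\|_{L^2}^2$ to turn lower bounds on the two pieces into a lower bound on $\|\nabla v\|_{L^2}^2$, this is a genuine (if easily repaired) defect.

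The paper's fix is to sample the weight at lattice points rather than average it: set $D_1\va(\eta):=\sqrt{1-\beta(\eta)}\,D_1 v(\eta)$ and $D_1\vc(\eta):=\sqrt{\beta(\eta)}\,D_1 v(\eta)$. Then $(D_1\va(\eta))^2+(D_1\vc(\eta))^2=(D_1 v(\eta))^2$ exactly, and the estimate proceeds via the discrete telescoping
\[
\sqrt{1-\beta(\xi)}\,D_\rho v(\xi)-D_\rho\va(\xi)=\sum_{\eta=\xi}^{\xi+\rho-1}\big(\psi(\xi)-\psi(\eta)\big)D_1 v(\eta),\qquad \psi:=\sqrt{1-\beta},
\]
followed by the same Lipschitz-plus-Cauchy--Schwarz argument you outlined. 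Your bookkeeping for the $|\rho|^{3/2}$ factor and your remark that the $C^{1,1}$ hypothesis is what keeps $\|\nabla\sqrt{1-\beta}\|_{L^\infty}$ finite are both correct and carry over unchanged.
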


	%

	

According to Lemma \ref{Pointwise blending lemma}, we can obtain the conclusion about the stability of the QNLL model in the case of non-uniform deformations.

\begin{theorem}\label{Stability}
	Let $y\in\Ya$ satisfy the strong stability condition \eqref{All-Atomistic strong local minimizer} and suppose that there exists $\ganllF >0$ such that
	\begin{equation}\label{Uniform solution of NL-L stab assumption}
		\langle \delta^{2}\El (\yF)v,v\rangle \ge \ganllF \Vert \nabla v\Vert^{2}_{L^{2}(-N,N)} \text{ for all } v \in \Ua.
	\end{equation}
	
	Then
	\begin{equation}\label{Result of stability}
		\begin{split}
			\langle \delta^{2}\El (y)v,v\rangle &\ge \min(c_{0},\ganllF)\Vert \nabla v \Vert_{L^{2}(-N,N)}^{2}\\
			&\ - 2M^{(2,\frac{1}{2})}K^{-1} \Vert \nabla v \Vert_{L^{2}(-N,N)}^{2} - \CDH M^{(3,0)}K^{-\alpha} \Vert \nabla v \Vert_{L^{2}(-L,L)}^{2}, \quad \text{for all } v \in \Ua.
		\end{split}
	\end{equation}
\end{theorem}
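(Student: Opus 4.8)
The plan is to prove \eqref{Result of stability} by a blending (localization) argument that couples the atomistic strong stability near the defect with the homogeneous stability of the QNLL model in the far field. Starting from the second-variation identity \eqref{Stab of NL-L}, I would introduce a cutoff $\beta\in C^{1,1}$ with $0\le\beta\le1$ that equals $0$ on the defect core and $1$ outside the atomistic region, arranged so that the transition takes place across $\Ac$; since the atomistic region has radius $\sim K$, this yields $\|\nabla\sqrt{\beta}\|_{L^\infty}+\|\nabla\sqrt{1-\beta}\|_{L^\infty}\lesssim K^{-1}$. Writing $1=(1-\beta(\xi))+\beta(\xi)$ and inserting this factor into every bond product $D_\rho v(\xi)D_\zeta v(\xi)$ (and the corresponding $(\nabla v)^2$ in the continuum integrals) splits the quadratic form \eqref{Stab of NL-L} into a $(1-\beta)$-weighted \emph{atomistic part} and a $\beta$-weighted \emph{continuum part}.

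Next I would apply Lemma \ref{Pointwise blending lemma} to replace, in the atomistic part, each weighted difference $\sqrt{1-\beta(\xi)}\,D_\rho v(\xi)$ by $D_\rho\va(\xi)$, and in the continuum part each $\sqrt{\beta(\xi)}\,D_\rho v(\xi)$ by $D_\rho\vc(\xi)$. The discrepancies are bounded pointwise by \eqref{Pointwise va blending estimate}--\eqref{Pointwise vc blending estimate1}, each carrying a factor $|\rho|^{3/2}\|\nabla\sqrt{\beta}\|_{L^\infty}$; summing these against the potential weights (which is where the moment $M^{(2,1/2)}$ assembles) and using $\|\nabla\sqrt{\beta}\|_{L^\infty}\lesssim K^{-1}$ produces precisely the loss $2M^{(2,1/2)}K^{-1}\|\nabla v\|_{L^2(-N,N)}^2$. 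For the atomistic part, $\va$ has gradient concentrated in the core, where $\El$ agrees with the atomistic energy; hence this part equals the atomistic second variation at $y$ applied to $\va$ (up to the interface terms $\Phii_\xi$, which are absorbed), and the strong-stability hypothesis \eqref{All-Atomistic strong local minimizer} bounds it below by $c_0\|\nabla\va\|_{L^2}^2$.

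For the continuum part I would first freeze the coefficients at the homogeneous deformation $\yF$. A key simplification is that on the linear region $\OmeL$ the density $\WL$ is quadratic, so $\ppGWL(\nabla y)=\Wppf$ is \emph{independent of $y$} and this replacement is exact there; the freezing is therefore only needed on the nonlinear part, and by the decay hypothesis $\DH$ the coefficient discrepancy $|\ppGW(\nabla y)-\Wppf|$ (and the analogous discrepancies of the atomistic coefficients in the transition zone) is controlled by $\|\nabla u\|_{L^\infty}\lesssim\CDH K^{-\alpha}$, since every site where $\beta\neq0$ and the model is nonlinear lies at distance $\gtrsim K$ from the origin and within $[-L,L]$. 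This accounts for the term $\CDH M^{(3,0)}K^{-\alpha}\|\nabla v\|_{L^2(-L,L)}^2$. After freezing, the continuum part becomes $\langle\delta^2\El(\yF)\vc,\vc\rangle$, which by \eqref{Stab of QNLL equals to QNL} and the hypothesis \eqref{Uniform solution of NL-L stab assumption} is bounded below by $\ganllF\|\nabla\vc\|_{L^2}^2$.

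Finally, I would combine the two lower bounds using the orthogonality identity \eqref{va and vc}, namely $|\nabla\va|^2+|\nabla\vc|^2=|\nabla v|^2$, so that $c_0\|\nabla\va\|_{L^2}^2+\ganllF\|\nabla\vc\|_{L^2}^2\ge\min(c_0,\ganllF)\|\nabla v\|_{L^2(-N,N)}^2$; subtracting the two accumulated error terms then yields \eqref{Result of stability}. I expect the main obstacle to be the error bookkeeping rather than any single estimate: one must verify that the bond-wise blending bounds of Lemma \ref{Pointwise blending lemma}, once summed against the many-body coefficients, collapse \emph{exactly} into the constants $M^{(2,1/2)}$ and $M^{(3,0)}$ with the stated powers $K^{-1}$ and $K^{-\alpha}$, and in particular confirm that the quadratic linear-elastic region contributes no $\DH$ error so that the $K^{-\alpha}$ loss is confined to $[-L,L]$. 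Tracking the cross terms generated by the $\beta/(1-\beta)$ split and the interface site energies $\Phii_\xi$ without degrading these rates is the delicate point.
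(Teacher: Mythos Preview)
Your proposal is correct and follows essentially the same approach as the paper's proof: the same $C^{1,1}$ cutoff supported in $\Ac$ with $\|\nabla\sqrt{\beta}\|_{L^\infty}\lesssim K^{-1}$, the same splitting via Lemma~\ref{Pointwise blending lemma} into $\va$ and $\vc$ parts, the same freezing of the $\vc$-part at $\yF$ using $\DH$ (with your observation that $\ppGWL$ is constant on $\OmeL$ making that replacement exact there), and the same recombination through \eqref{va and vc}. The paper's version differs only in that it writes out the explicit cutoff $\hat\beta(s)=3s^3-2s^2$ on $[K',K]$ with $K'=\lfloor K/2\rfloor$ and tracks the $\OmeA$, $\OmeI$, $\OmeNL$, $\OmeL$ contributions separately, but the structure and the constants $M^{(2,1/2)}K^{-1}$, $\CDH M^{(3,0)}K^{-\alpha}$ arise exactly as you describe.
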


\begin{proof}
	According to Lemma \ref{Pointwise blending lemma}, let $K^{'}:=\lfloor K/2\rfloor <K$, and let
	\begin{equation*}
		\beta (x):=\left\{
		\begin{aligned}
			&0, &-K^{'}\le x \le K^{'},\\
			&\hat{\beta}(\frac{x+K^{'}}{K^{'}-K}), &-K\le x \le -K^{'},\\
			&\hat{\beta} (\frac{x-K^{'}}{K-K^{'}}), &K^{'} \le x \le K,\\
			&1, &-N\le x \le -K \text{ or } K\le x\le N.
		\end{aligned}
		\right.
	\end{equation*}
	where $\hat{\beta}(s)=3s^{3}-2s^{2}$. We know from \cite[Section 8.3]{2013_ML_CO_AC_Coupling_ACTANUM} that 
	\begin{equation}\label{Property of beta}
		\Vert \nabla \sqrt{\beta} \Vert_{L^{\infty}} + \Vert \nabla \sqrt{1-\beta} \Vert_{L^{\infty}} \leq 	C_{\beta} K^{-1}.
	\end{equation}
	
	We can now write
	\begin{subequations}
		\begin{align}
			\langle \delta^{2} \El (y)v,v\rangle &= \sum_{\xi=-N}^{N} \sum_{(\rho,\zeta)\in\Rc^{2}} \Phia_{\xi,\rho\zeta} (y)\big(1-\beta(\xi)\big) D_{\rho}v(\xi) D_{\zeta}v(\xi) \nonumber\\
			&\quad +\sum_{\xi=-N}^{N} \sum_{(\rho,\zeta)\in\Rc^{2}} \PhiNLL_{\xi,\rho\zeta} (y)\big(\beta(\xi)\big) D_{\rho}v(\xi) D_{\zeta}v(\xi) \nonumber\\
			&= 
			\label{All atomistic stab}
			\sum_{\xi=-N}^{N} \sum_{(\rho,\zeta)\in\Rc^{2}} \Phia_{\xi,\rho\zeta} (y)\big(1-\beta(\xi)\big) D_{\rho}v(\xi) D_{\zeta}v(\xi)\\
			\label{NL-L atomistic stab} 
			&\quad +\sum_{\xi \in \Ac} \sum_{(\rho,\zeta)\in\Rc^{2}} \Phia_{\xi,\rho\zeta} (y)\big(\beta(\xi)\big)D_{\rho}v(\xi)D_{\zeta}v(\xi)\\
			\label{NL-L interaction stab} 
			&\quad +\sum_{\xi \in \Ic} \sum_{(\rho,\zeta)\in\Rc^{2}} \Phii_{\xi,\rho\zeta} (y)\big(\beta(\xi)\big)D_{\rho}v(\xi)D_{\zeta}v(\xi)\\
			\label{NL-L nonlinear stab} 
			&\quad + \int_{\OmeNL} \ppGW (\nabla y)\big(\beta (x)\big) (\nabla v)^{2}\,\d x\\
			\label{NL-L linear stab} 
			&\quad +\int_{\OmeL}  \ppGWL (\nabla y)\big(\beta (x)\big)(\nabla v)^{2}\,\d x.
		\end{align}
	\end{subequations}
	where we also use the fact that according to our definition of $\beta$, the first sum ranges only over those sites where $\PhiNLL_{\xi} = \Phia_{\xi}$.
	
	We apply estimate \eqref{Pointwise va blending estimate} to \eqref{All atomistic stab}, and we obtain
	\begin{equation}\label{Atomistic va stab result}
		\begin{split}
			\sum_{\xi=-N}^{N} \sum_{(\rho,\zeta)\in\Rc^{2}} \Phia_{\xi,\rho\zeta} (y)\big(1-\beta(\xi)\big) D_{\rho}v(\xi) D_{\zeta}v(\xi) &\ge \langle \delta^{2} \Ea (y)\va,\va\rangle 
			- 2M^{(2,\frac{1}{2})}K^{-1}\Vert \nabla v \Vert_{L^{2}(-N,N)}^{2}\\
			&\ge c_{0} \Vert \nabla \va \Vert_{L^{2}(-N,N)}^{2}-2M^{(2,\frac{1}{2})}K^{-1}\Vert \nabla v \Vert_{L^{2}(-N,N)}^{2}.
		\end{split}
	\end{equation}
	
	We apply the estimate \eqref{Pointwise vc blending estimate1} to \eqref{NL-L atomistic stab} to get 
	\begin{equation}\label{NL-L atomistic stab 1}
		\begin{split}
			\sum_{\xi \in \Ac} \sum_{(\rho,\zeta)\in\Rc^{2}} \Phia_{\xi,\rho\zeta} (y)\big(\beta(\xi)\big)D_{\rho}v(\xi)D_{\zeta}v(\xi) \ge& \sum_{\xi \in \Ac} \sum_{(\rho,\zeta)\in\Rc^{2}} \Phia_{\xi,\rho\zeta} (y)D_{\rho}\vc(\xi)D_{\zeta}\vc(\xi) \\
			&\ - 2M^{(2,\frac{1}{2})} K^{-1} \Vert \nabla v \Vert_{L^{2}(\OmeA)}^{2}.
		\end{split}
	\end{equation}
	
	By the definition of $\vc$, we notice that for $x \in[-K^{'},K^{'}]$, $\nabla \vc = 0$. After using Taylor's expansion at $\yF$ and $\DH$ assumption, we obtain
	\begin{equation}\label{NL-L atomistic stab 2}
		\begin{split}
			\sum_{\xi \in \Ac} \sum_{(\rho,\zeta)\in\Rc^{2}} \Phia_{\xi,\rho\zeta} (y)D_{\rho}\vc(\xi)D_{\zeta}\vc(\xi)\ge& \sum_{\xi \in \Ac} \sum_{(\rho,\zeta)\in\Rc^{2}} \Phia_{\xi,\rho\zeta} (\yF)D_{\rho}\vc(\xi)D_{\zeta}\vc(\xi)\\
			&\ -2^{\alpha}\CDH M^{(3,0)}  (K)^{-\alpha} \Vert\nabla \vc \Vert_{L^{2}(\OmeA)}^{2}.
		\end{split}
	\end{equation}
	
	Combining \eqref{NL-L atomistic stab 1} with \eqref{NL-L atomistic stab 2}, we can obtain
	\begin{equation}\label{NL-L atomistic stab result}
		\begin{split}
			\sum_{\xi \in \Ac} \sum_{(\rho,\zeta)\in\Rc^{2}} \Phia_{\xi,\rho\zeta} (y)\big(\beta(\xi)\big)D_{\rho}v(\xi)D_{\zeta}v(\xi) &\ge \sum_{\xi \in \Ac} \sum_{(\rho,\zeta)\in\Rc^{2}} \Phia_{\xi,\rho\zeta} (\yF)D_{\rho}\vc(\xi)D_{\zeta}\vc(\xi) \\
			&\ - 2M^{(2,\frac{1}{2})} K^{-1} \Vert \nabla v \Vert_{L^{2}(\OmeA)}^{2}-2^{\alpha}\CDH M^{(3,0)}  K^{-\alpha}\Vert\nabla \vc \Vert_{L^{2}(\OmeA)}^{2}.
		\end{split}
	\end{equation}
	
	Similarly, for the term \eqref{NL-L interaction stab}, we have
	\begin{equation}\label{NL-L interaction stab result}
		\begin{split}
			\sum_{\xi \in \Ic} \sum_{(\rho,\zeta)\in\Rc^{2}} \Phii_{\xi,\rho\zeta} (y)\big(\beta(\xi)\big)D_{\rho}v(\xi)D_{\zeta}v(\xi) &\ge \sum_{\xi \in \Ic} \sum_{(\rho,\zeta)\in\Rc^{2}} \Phii_{\xi,\rho\zeta} (\yF)D_{\rho}\vc(\xi)D_{\zeta}\vc(\xi) \\
			&\ - M^{(2,\frac{1}{2})} K^{-1} \Vert \nabla v \Vert_{L^{2}(\OmeI)}^{2}-\CDH M^{(3,0)}  K^{-\alpha} \Vert\nabla \vc \Vert_{L^{2}(\OmeI)}^{2}.
		\end{split}
	\end{equation}
	
	After considering the definition of $\vc$ and assumption $\DH$, for~\eqref{NL-L nonlinear stab}, we have 
	\begin{equation}\label{NL-L nonlinear stab result}
		\int_{\OmeNL} \ppGW (\nabla y)\big(\beta(x)\big)(\nabla v)^{2}\,\d x \ge 	\int_{\OmeNL} \ppGW (\nabla \yF)(\nabla\vc)^{2}\,\d x - 2\CDH M^{(3,0)}  K^{-\alpha} \Vert \nabla \vc \Vert_{L^{2}(\OmeNL)}^{2}.
	\end{equation}

	We use the fact that $\ppGWL (\nabla y) = \Wppf = \ppGWL (\nabla \yF)$ again, and we can rewrite \eqref{NL-L linear stab} as
	\begin{equation}\label{NL-L linear stab result}
		\int_{\OmeL}\ppGWL (\nabla y)\big(\beta(x)\big) (\nabla v)^{2} \,\d x = \int_{\OmeL}\ppGWL (\nabla \yF) (\nabla \vc)^{2} \,\d x.
	\end{equation}
	
		%
		%
		%
		%

	By considering the definition of $\langle \delta^{2} \El (\yF) \vc,\vc \rangle$, \eqref{va and vc} and \eqref{Property of beta}, we conclude that 
	\begin{equation*}
		\langle \delta^{2} \El (y)v,v\rangle \ge \langle \delta^{2} \El (\yF) \vc,\vc \rangle + \langle \delta^{2}\Ea(y)\va,\va \rangle.
	\end{equation*}
	
\end{proof}


\subsection{A priori existence and error estimate}
\label{sec: priori_qnll_ncg}

In this section, based on the consistency error estimate \eqref{QNLL consistency error estimate} and stability analysis \eqref{Result of stability} of the QNLL model, we will provide a priori error analysis for the QNLL model using the inverse function theorem.

\begin{theorem}\label{Priori of NCG}
	Let $\yai \in \Ya$ be a strongly stable atomistic solution satisfying \eqref{All-Atomistic strong local minimizer} and $\DH$. Consider the QNLL problem \eqref{Nonlinear-linear solution condition}, supposing, moreover, that $\El$ is stable in the reference state Theorem \ref{Stability}. Then there exists $K_0$ such that, for all $K \ge K_0$, \eqref{Nonlinear-linear solution condition} has a locally unique, strongly stable solution $\ynll$ which satisfies
	\begin{equation}
		\begin{aligned}
			\Vert \nabla\yai - \nabla \ynll \Vert_{L^2} \lesssim 8&M^{(3,0)}(\Vert \nabla^{2} u\Vert_{L^{2}(\bOmeI)} +\Vert \nabla^{3}u \Vert_{L^{2}(\bOmeC)}+\Vert \nabla^{2}u \Vert^{2}_{L^{4}(\bOmeC)}\\
			&+ \Vert \nabla u \Vert^{2}_{L^{4}(\OmeL)}+N^{\frac{1}{2}-\alpha})/\big(\min(c_{0},\ganllF)\big)^2.
		\end{aligned}
	\end{equation}
	
\end{theorem}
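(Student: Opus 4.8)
The plan is to run the standard consistency--stability--inverse-function-theorem argument, using the error separation \eqref{eq: error separation} to reduce everything to the difference between the truncated atomistic solution $\ya$ and the QNLL solution $\ynll$. First I would write
\[
	\Vert \nabla \yai - \nabla \ynll\Vert_{L^{2}} \le \Vert \nabla \yai - \nabla \ya\Vert_{L^{2}} + \Vert \nabla \ya - \nabla \ynll\Vert_{L^{2}},
\]
and dispose of the first term immediately via the truncation estimate \eqref{Truncation error}, which contributes the $N^{\frac{1}{2}-\alpha}$ term and is unrelated to the coupling. All the genuine work then lies in bounding $\Vert \nabla \ya - \nabla \ynll\Vert_{L^{2}}$.

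For that term I would apply the quantitative inverse function theorem, Lemma \ref{Inverse function theorem}, to the map $F(y) := \delta \El(y) - f$ on the displacement space $\Un$, taking $\ya$ as the approximate solution. The residual is $F(\ya) = \delta\El(\ya) - \delta\Ea(\ya) = -T(\ya)$, since $\ya$ solves the truncated atomistic equation \eqref{Atomistic solution condition}; hence $\Vert F(\ya)\Vert_{\Yn^{*}} = \Vert T\Vert_{\Yn^{*}}$ is exactly the consistency error already bounded in \eqref{QNLL consistency error estimate}, evaluated at the displacement $u$ of $\ya$. The stability hypothesis of the theorem is supplied by Theorem \ref{Stability}: evaluated at the strongly stable $\ya$, the inequality \eqref{Result of stability} gives coercivity of $\delta^{2}\El(\ya)$ with constant $\min(c_{0},\ganllF) - 2M^{(2,\frac{1}{2})}K^{-1} - \CDH M^{(3,0)}K^{-\alpha}$, which I would force to be positive (indeed bounded below by $\tfrac12\min(c_{0},\ganllF)$) for all $K \ge K_{0}$ by choosing $K_{0}$ large. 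The last ingredient is a local Lipschitz bound on $y \mapsto \delta^{2}\El(y)$ near $\ya$, which follows from $V\in C^{3}$ together with the explicit quadratic form of $\WL$, with Lipschitz constant controlled by $M^{(3,0)}$; this is what feeds the $M^{(3,0)}$ prefactor and the power of $\min(c_{0},\ganllF)$ into the final bound through the smallness condition of Lemma \ref{Inverse function theorem}.

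Assembling these, Lemma \ref{Inverse function theorem} produces, for $K \ge K_{0}$, a locally unique strongly stable $\ynll$ with $\Vert \nabla\ya - \nabla\ynll\Vert_{L^{2}} \lesssim \Vert T\Vert_{\Yn^{*}}/\min(c_{0},\ganllF)$ (the precise power of the stability constant and the $M^{(3,0)}$ factor being dictated by the stated form of the inverse function theorem). Substituting \eqref{QNLL consistency error estimate} for $\Vert T\Vert_{\Yn^{*}}$ and combining with the truncation contribution then yields the claimed estimate.

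The main obstacle I anticipate is verifying the smallness condition of the inverse function theorem uniformly in $K$: the theorem requires the product of the Lipschitz constant and the residual $\Vert T\Vert_{\Yn^{*}}$ to be dominated by (a power of) the stability constant, so one must confirm that the consistency error is genuinely small for large $K$. This is not automatic from \eqref{QNLL consistency error estimate} alone---it requires feeding the decay hypothesis $\DH$ into the right-hand side so that $\Vert\nabla^{2} u\Vert_{L^{2}(\bOmeI)}$, $\Vert\nabla^{3} u\Vert_{L^{2}(\bOmeC)}$, and the $L^{4}$ terms on $\bOmeC$ and $\OmeL$ all vanish as the atomistic, interface, and nonlinear regions grow, which is what eventually guarantees that the hypotheses of Lemma \ref{Inverse function theorem} are met. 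A secondary technical point is promoting the coercivity of $\delta^{2}\El(\ya)$ from the single point $\ya$ to a neighborhood, so that the branch produced by the theorem is itself strongly stable; this again rests on the Lipschitz continuity of $\delta^{2}\El$ established above.
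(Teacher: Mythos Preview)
Your proposal is correct and follows essentially the same route as the paper: define $\Ghc(y)=\delta\El(y)-\langle f,\cdot\rangle_N$, bound the residual at $\ya$ by the consistency estimate \eqref{QNLL consistency error estimate}, obtain coercivity of $\delta^2\El(\ya)$ from Theorem~\ref{Stability} for $K$ large, supply a Lipschitz bound on $\delta^2\El$ with constant $\lesssim M^{(3,0)}$, invoke Lemma~\ref{Inverse function theorem}, and finish with the triangle inequality and \eqref{Truncation error}. Your discussion of the smallness condition (using $\DH$ to drive the consistency terms to zero) and of propagating coercivity to $\ynll$ via the Lipschitz bound is in fact more explicit than the paper's own proof.
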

\begin{proof}
	We will first provide the a priori error estimate for $\Vert \nabla \ynll - \nabla \ya\Vert_{L^2}$ using the quantitative inverse function theorem, with
	\begin{equation*}
		\Ghc( \ya):=\delta \El( \ya) - \langle f,\cdot\rangle_{N}.
	\end{equation*}
	We first apply that the scaling condition implies a Lipschitz bound for $\delta^{2}\El$, 
	\begin{equation}\label{scaling assumption}
		\Vert \delta^2 \El (y) - \delta^2 \El(v)\Vert_{\mathcal{L}(\Ya,\Ya^{*})}\le M \Vert \nabla y-\nabla v\Vert_{L^{\infty}}, \quad \text{for all } v \in \Ua,
	\end{equation}
	where $M \lesssim M^{(3,0)}$. Since $\Vert \cdot \Vert_{\infty} \lesssim \Vert \cdot \Vert_{L^2}$, we can also replace the $L^\infty$- norm on the right-hand side with the $L^2$-norm.
	The residual estimate \eqref{QNLL consistency error estimate} gives
	\begin{equation*}
		\Vert \Ghc( \ya)\Vert_{\Yn}\lesssim 
		M^{(2,1)}\Vert \nabla^{2} u\Vert_{L^{2}(\bOmeI)} +M^{(2,2)}\Vert \nabla^{3}u \Vert_{L^{2}(\bOmeC)}+M^{(3,2)}\Vert \nabla^{2}u \Vert^{2}_{L^{4}(\bOmeC)}+ M^{(3,0)}\Vert \nabla u \Vert^{2}_{L^{4}(\OmeL)}
	\end{equation*}
	From Theorem \ref{Stability} we obtain that
	\begin{equation*}
		\langle \delta^{2}\El (\ya)v,v\rangle \ge (\min(c_{0},\ganllF)-CK^{-\min(1,\alpha)})\Vert \nabla v \Vert_{L^{2}}^{2}.
	\end{equation*}
	Let $\gamma:=\frac{1}{2}\min(c_{0},\ganllF)$. Applying the Lipschitz bound \eqref{scaling assumption}, and we obtain
	\begin{equation*}
		\langle \delta^{2}\El (\ya)v,v\rangle \ge (2\gamma-CK^{-\min(1,\alpha)}-CK^{-1/2-\alpha})\Vert \nabla v \Vert_{L^{2}}^{2}.
	\end{equation*}
	Hence, for $K$ sufficiently large, we obtain that
	\begin{equation*}
		\langle \delta\Ghc (\ya)v,v\rangle \ge \gamma\Vert \nabla v \Vert_{L^{2}}^{2}, \quad \text{for all }v\in\Un.
	\end{equation*}
	Thus, we deduce the existence of $\ynll$ satisfying $\Ghc(\ynll)=0$. The error estimate implies
	\begin{equation*}
		\begin{aligned}
			\Vert \nabla \ynll - \nabla \ya\Vert_{L^2}&\lesssim \frac{2M\eta}{\gamma^2} \\
			&\lesssim  2M^{(3,0)}(\Vert \nabla^{2} u\Vert_{L^{2}(\bOmeI)} +\Vert \nabla^{3}u \Vert_{L^{2}(\bOmeC)}+\Vert \nabla^{2}u \Vert^{2}_{L^{4}(\bOmeC)}
			+ \Vert \nabla u \Vert^{2}_{L^{4}(\OmeL)})/\gamma^2.
		\end{aligned}
	\end{equation*}
	Finally, by using the triangle inequality and truncation error \eqref{Truncation error}, we yield the stated result.
	
\end{proof}

\subsection{Discussion of the (quasi-)optimal choice of the length of regions}
\label{sec: balance_of_qnll_ncg_model}

In this subsection, we will discuss how to achieve the quasi-optimal choice of the lengths for nonlinear continuum region, linear continuum region, and computational region to obtain quasi-optimal convergence order for the QNLL model.

\subsubsection{The quasi-optimal choice of $L$}
\label{sec: choice_of_L_ncg}

We aim to balance the lengths of nonlinear continuum region and linear continuum region by incorporating coupling error estimates \eqref{Interface region stress tensor}, \eqref{Continuum region stress tensor} and linearization error estimate \eqref{Linearization error estimate}, under the assumption of $\DH$.

First, we use the $\DH$ assumption to obtain the convergence order of coupling error estimates \eqref{Interface region stress tensor}, \eqref{Continuum region stress tensor} concerning the length of atomistic region and nonlinear continuum region :
\begin{equation*}
	\begin{split}
		M^{(2,1)}\Vert \nabla^{2} u&\Vert_{L^{2}(\bOmeI)} +M^{(2,2)}\Vert \nabla^{3}u \Vert_{L^{2}(\bOmeC)}+M^{(3,2)}\Vert \nabla^{2}u \Vert^{2}_{L^{4}(\bOmeC)}M^{(3,0)}\\
		&\lesssim \CDH M^{(2,1)}K^{-\alpha -1}+\CDH M^{(2,2)}\bar{K}^{-\alpha-\frac{3}{2}} +\CDH^{2}M^{(3,2)}\bar{K}^{-2\alpha -\frac{3}{2}}.
	\end{split}
\end{equation*}
Here we need to note the fact that $K + 2 = \bar{K}$, so we can assume $K\approx \bar{K}$. The lowest-order term among them is $\Vert \nabla^{2} u\Vert_{L^{2}(\bOmeI)} \lesssim K^{-\alpha-1}(\bar{K}^{-\alpha-1})$.

Next, we will similarly apply the $\DH$ assumption to the linearization error estimate \eqref{Linearization error estimate} to obtain its convergence order with respect to the length of the linear continuum region :

\begin{equation*}
	M^{(3,0)}\Vert \nabla u \Vert^{2}_{L^{4}(\OmeL)} \lesssim \CDH^{2}M^{(3,0)}L^{-2\alpha+\frac{1}{2}}.
\end{equation*}

The lowest-order term of $L$ is $\Vert \nabla u \Vert^{2}_{L^{4}(\OmeL)} \lesssim L^{-2\alpha+\frac{1}{2}}$. We balance this term with $\Vert \nabla^{2} u\Vert_{L^{2}(\bOmeI)} \lesssim \bar{K}^{-\alpha-1}$ to get (by noticing the fact that $\bar{K}\le L$)
\begin{align}
	L &\lesssim \bar{K}^{\frac{1}{2}+\frac{5}{8\alpha-2}}(\frac{1}{2}<\alpha<\frac{3}{2}) \label{Balance of L NCG 1},\\
	L &\approx \bar{K}(\alpha \ge \frac{3}{2})\label{Balance of L NCG 2}.
\end{align}

Because through balancing we have made the orders of the lowest order terms of $\bar{K}$ and $L$ equal, for simplicity in this section, we will uniformly use linearization error $L^{-2\alpha+\frac{1}{2}}$ to represent the lowest-order term of coupling error and linearization error.

\subsubsection{The quasi-optimal choice of $N$}
\label{sec: choice_of_N_ncg}

After balancing the lengths of nonlinear continuum region and linear continuum region, we will now balance the computational domain length, which will follow the principles:

\begin{enumerate}
	\item We should ensure that the truncation error term $N^{\frac{1}{2}-\alpha}$ do not dominate among the various types of errors after balancing the length of the computational domain;
	
	\item We choose the length of the computational domain as small as possible for computational simplicity.
\end{enumerate}

According to the first principle mentioned above, we understand that the truncation error $N^{\frac{1}{2}-\alpha}$ must be balanced against one of the terms of coupling error or linearization error (or higher-order terms). According to the second principle, to select the computational domain length $N$ as small as possible, we must balance it against the lowest-order term of coupling error or linearization error (balancing against higher-order terms would need a longer computational domain length).

After balancing the lengths of nonlinear continuum region and linear continuum region, the lowest-order term is $L^{\frac{1}{2}-2\alpha}$, we should choose $N$ such that
\begin{equation*}
	L^{\frac{1}{2}-2\alpha}\approx N^{\frac{1}{2}-\alpha}, \quad \text{that is}, \ N\approx L^{\frac{2\alpha-1/2}{\alpha-1/2}}.
\end{equation*}

\subsection{Numerical validation}
\label{sec: experiments_qnll_ncg}

In this section, we present numerical experiments to illustrate the result of our analysis. With slight adjustments, the problem we consider here is a typical testing case in one dimension. We fix $F=1$ and let $V$ be the site energy given by the embedded atom method (EAM)~\cite{1984_Daw_Baskes_EAM_PRB}:
\begin{equation}\label{EAM of numerical experiments}
	V\big(Dy(l)\big)=\frac{1}{2} \sum_{i\in\{1,2\};j\in\{-1,-2\}}\big(\phi(D_{i}y_{l})+\phi(-D_{j}y_{l})\big)+\widetilde{F}\left(\sum_{i\in\{1,2\};j\in\{-1,-2\}}\big[\psi(D_{i}y_{l})+\psi(-D_{j}y_{l})\big]\right),
\end{equation}
where $\phi(r) = \exp\big(-2a(r-1)\big)-2\exp\big(-a(r-1)\big), \psi(r) = \exp(-br)$, and $\tilde{F}(\rho) = c[(\rho-\rho_{0})^{2}+(\rho-\rho_{0})^{4}]$, with the parameter $a=4.4, b=3, c=5,\rho_{0} =2\exp(-b)$.

We fix an exact solution
\begin{equation}\label{External force of numerical experiments}
	\ya(\xi):=F\xi + \frac{1}{10}(1+\xi^2)^{\alpha/2}\xi,
\end{equation}
and compute the external forces $f(\xi)$ to be the equal to the internal forces under the deformation $\ya$. The parameter $\alpha$ is a prescribed decay exponent. One may readily check that this solution and the associated external forces satisfy the decay hypothesis $\DH$.

We will demonstrate the method of controlling the length of non-linear continuum region in the QNLL model to achieve quasi-optimal convergence order, as introduced in Section~\ref{sec: balance_of_qnll_ncg_model}. We will conduct numerical experiments with the atomistic model length of 100,000 atoms. We set energy functional and external force to \eqref{EAM of numerical experiments} and \eqref{External force of numerical experiments}. The experiments will be carried out for $\alpha$ values of $1.2, 1.5$ and $1.8$.


\begin{figure}[h!]
	\centering
	\subfloat[$\alpha = 1.8$]{
		\includegraphics[width=0.3\textwidth]{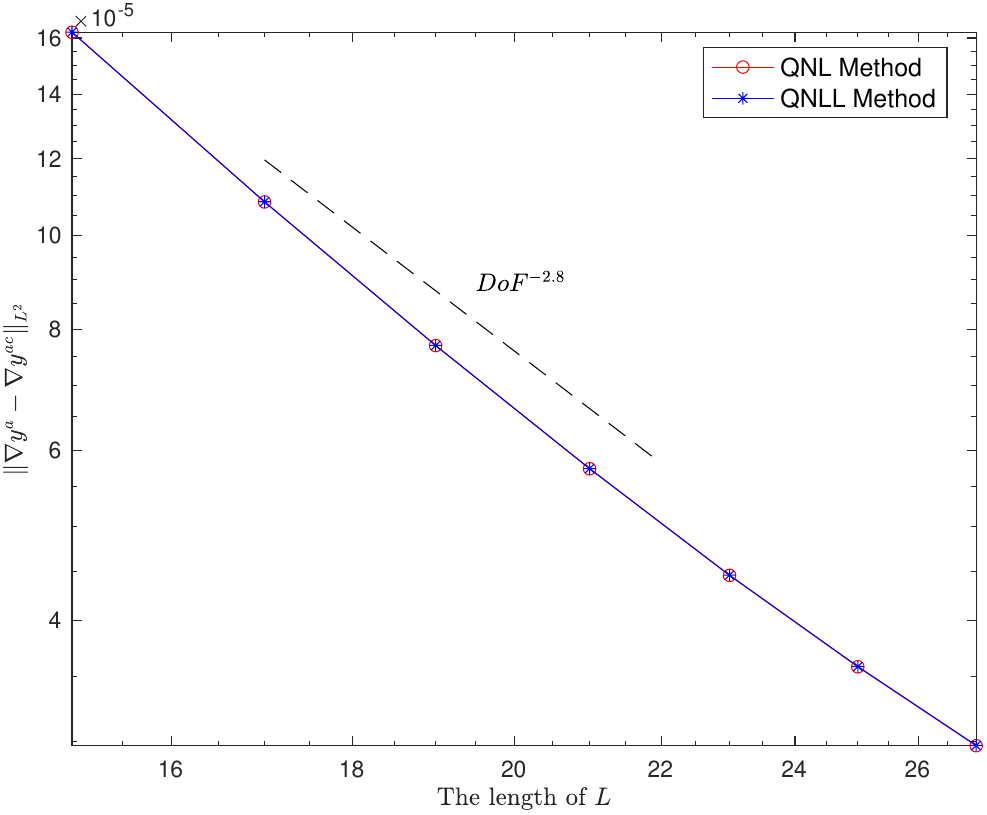}
		\label{fig: convergence_QNL_QNLL_alpha18_NCG}
	}
	\subfloat[$\alpha = 1.5$]{
		\includegraphics[width=0.3\textwidth]{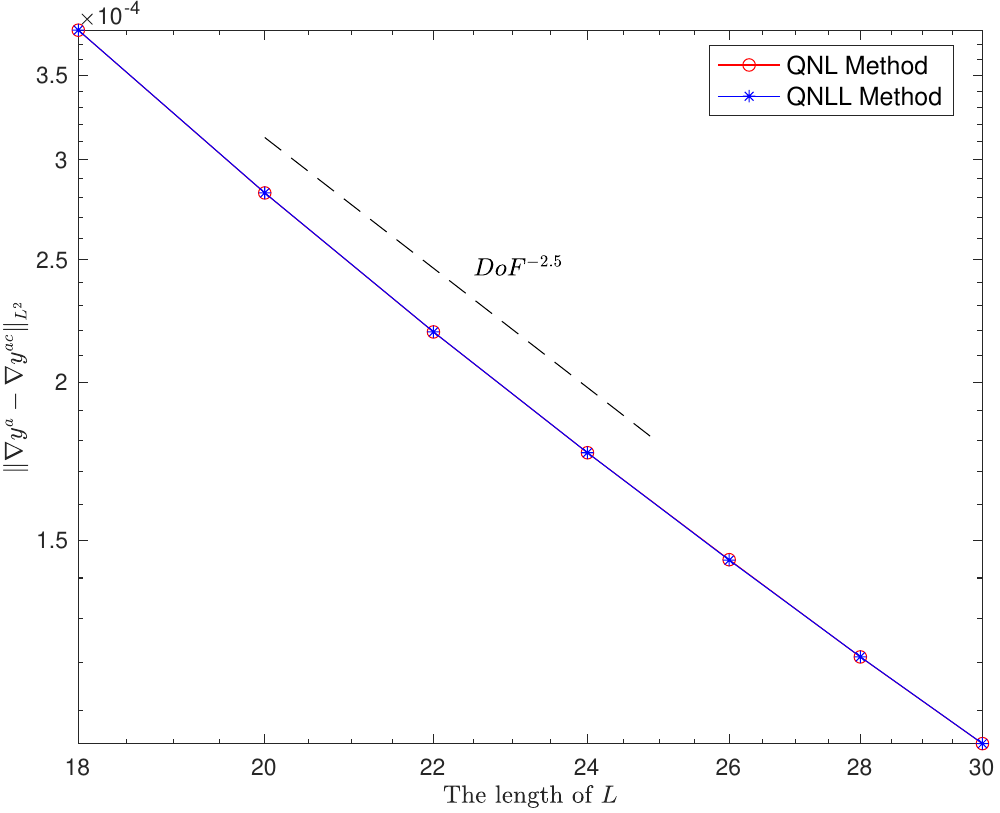}
		\label{fig: convergence_QNL_QNLL_alpha15_NCG}
	}
	\subfloat[$\alpha = 1.2$]{
		\includegraphics[width=0.3\textwidth]{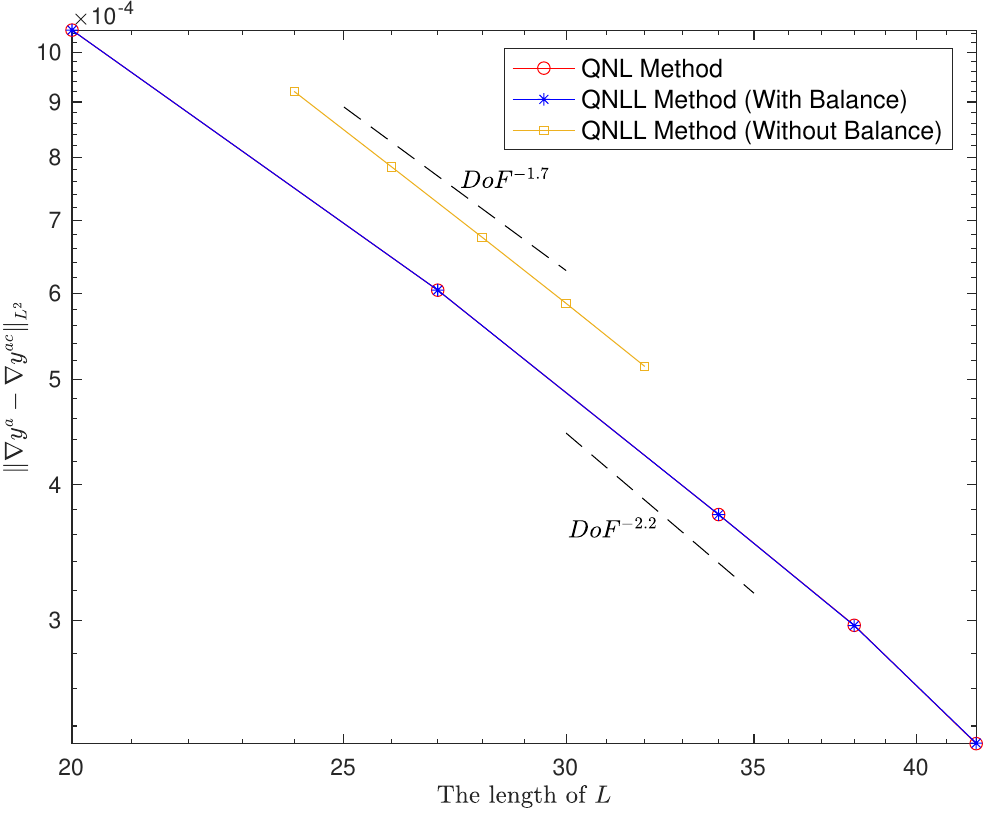}
		\label{fig: convergence_QNL_QNLL_alpha12_NCG}
	}
	\caption{The convergence order of QNL and QNLL method with different $\alpha$ (without coarse-graining)}
	\label{fig: convergence_QNL_QNLL_NCG}
\end{figure}

Firstly, let us consider the experiment with alpha set to $1.8$: In this case, according to \eqref{Balance of L NCG 2}, by setting the length of the nonlinear continuum region to a few atoms ($\bar{K} \approx L$), the convergence order of the QNLL method matches that of the QNL method. In the Figure below, the $x$-axis represents the length of $L$, while the $y$-axis shows the absolute error $\Vert \nabla \yai - \nabla y^{\text{ac}} \Vert_{L^{2}}~\text{(ac} = \text{QNL, QNLL)}$ between the reference atomistic solution $\yai$ and the AC solutions $y^{\text{ac}}$. It can be observed that the two convergence order lines in the graph nearly overlap, indicating that the difference between the two AC solutions $\Vert \nabla y^{\text{QNL}} - \nabla y^{\text{QNLL}}\Vert$ is between $10^{-6} $and $10^{-7}$.


When $\alpha = 1.5$, the results are similar to when $\alpha = 1.8$. The figure above compares the convergence order of the QNLL method and the QNL method. The information represented on the axes is the same as in Figure \ref{fig: convergence_QNL_QNLL_alpha12_NCG}. We observe a similar outcome to Figure \ref{fig: convergence_QNL_QNLL_alpha18_NCG}, where the convergence lines of the QNLL method closely overlap with those of the QNL method.


Furthermore, we will now consider the case where $\alpha=1.2$. In this setting, according to \eqref{Balance of L NCG 1} and \eqref{Balance of L NCG 2}, there are two mesh generation schemes for the QNLL method:
\begin{enumerate}
	\item In the first scheme, we focus on the accuracy of the QNLL method. According to \eqref{Balance of L NCG 1}, we precisely balance the atomistic region, nonlinear continuum region, linear continuum region, and the total length of the computational domain to achieve convergence order identical to those of the QNL method.
	
	\item In the second scheme, we prioritize the computational efficiency of the QNLL method. Therefore, after balancing the lengths of the atomistic region and the total length of the computational domain, we minimize the length of the nonlinear continuum region as much as possible, even down to just a few atoms.
\end{enumerate}


In the figure below, we represent the first mesh generation scheme with red dashed squares for the QNLL method, and the second generation scheme with blue dashed squares. To demonstrate the accuracy of the QNLL method, the QNL method also adopts the first mesh generation scheme, depicted in the figure with red dashed star symbols. The information represented on the axes is the same as in Figure \ref{fig: convergence_QNL_QNLL_alpha18_NCG}. We observe that, after balancing the lengths of the atomisticc region, nonlinear continuum region, linear continuum region, and the total length of the computational domain, the absolute errors and convergence order obtained by the QNLL method are consistent with those of the QNL method. However, after reducing the length of the nonlinear continuum region in pursuit of computational efficiency, there is a noticeable increase in absolute errors and a decrease in convergence speed.

Next, to demonstrate the computational efficiency of the QNLL method where $\alpha = 1.2$, we test the variation in computation time by progressively increasing the degrees of freedom of the nonlinear continuum region $\Nnl$ of the QNLL method, while keeping the finite element mesh fixed, meaning the continuum region remains unchanged. The results are as shown in the table below: the first column lists the method names, with parentheses indicating the proportion of the degrees of the freedom of nonlinear continuum region $\Nl$ to that of the total continuum region $\Nc$; the second column denotes the total degrees of freedom of the mesh and the third column records the ratio of the computing time of the QNLL method to the computing time of the QNL method on a device with an M1 CPU and 16 GB of RAM:


\begin{table}
	\centering
	\renewcommand{\arraystretch}{1.5} 
	\begin{tabular}{|c|c|} 
		\hline 
		Method ($\Nnl/\Nc$) & The ratio of the computing time\\ 
		\hline 
		QNLL ($24.99\%$) & $66.68\%$ \\ 
		QNLL ($40.97\%$) & $77.66\%$ \\ 
		QNLL ($74.96\%$) & $88.88\%$ \\ 
		QNL ($100\%$) & $100\%$ \\ 
		\hline 
	\end{tabular}
	\caption{The computing time (without coarse graining) of QNL and QNLL method without coarse-graining ($\alpha = 1.2$), with Degree of Freedom (DoF) set to 500000 for all methods.}
	\label{tab:computing time alpha12 NCG}
\end{table}

Table \ref{tab:computing time alpha12 NCG} clearly shows that, with fixed lengths of the Atomistic and Continuum regions, the computing time increases significantly as the proportion of nonlinear elements in the Continuum region rises. However, in practical applications, the proportion of nonlinear elements will be lower (below 5$\%$) according to the balancing method described in Section \ref{sec: balance_of_qnll_ncg_model}. The ratio of the difference between the absolute errors of the QNLL method and the QNL method to the absolute errors of the QNL method: $( \Vert \nabla \yai - \nabla y^{\text{QNLL}} \Vert_{L^{2}} - \Vert \nabla \yai - \nabla y^{\text{QNL}} \Vert_{L^{2}}) / \Vert \nabla \yai - \nabla y^{\text{QNL}} \Vert_{L^{2}}$ is in a narrow range. Here, the ratio, as defined above, is within the range of $10^{-5}$ to $10^{-6}$. This indicates that the QNLL method maintains high accuracy while still offering computational efficiency advantages.
	
	\section{QNLL Method with Coarse-Graining}
\label{sec: qnll_cg}

The QNLL method we analyze in Section~\ref{sec: anal_qnll_ncg} is not a computable scheme since it considers every atom in the computational domain as a degree of freedom and the computational cost gets high fast as the computational domain or $N$ goes large. Therefore, as a common practice of the a/c method, we need to coarse grain the continuum region to reduce the number of degrees of freedom and consequently the computational cost. 

In this section, we follow the same analysis framework as Section~\ref{sec: anal_qnll_ncg}. However, the difference is that in the consistency error part, we incorporate the error introduced by coarse graining. First, we give the formulation and analysis of the coarse-grained QNLL method. Then, we focus on the balancing of the atomistic, nonlinear, and linear regions so that the (quasi-)optimal convergence of the QNLL method, comparable to that of the QNL method, is achieved. Finally, we present several numerical experiments to demonstrate that the QNLL method, with proper balance of the different regions, retains the same level of accuracy as the QNL method while substantially lowering the computational cost measured by CPU time.

\subsection{Coarse-graining and analysis of the QNLL method}
\label{sec: anal_qnll_cg}


Let $\ThNL = \{T_j\}_{j = 1}^{J} := \big\{[v_{j-1},v_{j}]\ | \ j=1,\dots,J\big\}$ be a regular partition of the computational domain $[-N,N]$ into closed intervals or elements $T_j$. We assume that the vertices of the partition are all at atoms or lattice sites and are denote  by $\NhNL := \{v_{0},\dots,v_J \} \subset \Z_{+}$ (which are often termed as rep-atoms in the language of the quasicontinuum method). We define the coarse-grained space of displacements by
\begin{equation}\label{UhNL space}
	\UhNL:=\{u_{h}\in\Un \ | \ u_{h}(-N)=u_{h}(N)=0 \text{ and }u_{h} \text{ is piecewise affine with respect to }\ThNL\},
\end{equation}
and subsequently the admissible set of deformations by
\begin{equation}\label{YhNL space}
	\YhNL:=\{y\ | \  y = Fx+u_{h}, \ u_{h}\in\UhNL\}.
\end{equation}

We define interpolation operator $I_h: \Yn \rightarrow \YhNL$ by $\IhNL v(\zeta) =v(\zeta), \forall \zeta \in \NhNL$ and $\IhNL v \in \text{P1} (\ThNL)$, which is the piecewise affine nodal interpolation with respect to $\ThNL$. We firstly introduce the following proposition obtained from Poincare's inequality for future usage.

\begin{proposition}
	Let $T\in \ThNL , \ T \subset [-N,-\bar{K}]\cup[\bar{K},N]$ and $u\in \mathscr{U}$. Then
	\begin{equation*}
		\Vert \nabla u-\nabla \IhNL u\Vert_{L^{2}(T)}\lesssim h_{T} \Vert \nabla^{2} u\Vert_{L^{2}(T)}.
	\end{equation*}
\end{proposition}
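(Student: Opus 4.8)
The plan is to localize to the single element $T=[v_{j-1},v_j]$ and reduce the claim to a one–dimensional Poincaré–Wirtinger inequality for $\nabla u$. First I would record the explicit form of the interpolant on $T$: since $\IhNL u$ is affine on $T$ and matches $u$ at the two lattice endpoints $v_{j-1},v_j$, its gradient is the constant $\nabla \IhNL u|_T=(u(v_j)-u(v_{j-1}))/h_T$, with $h_T=v_j-v_{j-1}$. The hypothesis $T\subset[-N,-\bar{K}]\cup[\bar{K},N]$ places $T$ entirely in the continuum region and, together with $\NhNL\subset\Z_+$, guarantees that the endpoints are lattice sites, so $T$ is a union of full unit cells on which $\nabla u$ is piecewise constant.

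The key identity is that this interpolant slope is exactly the mean of $\nabla u$ over $T$. Because the vertices sit at lattice sites, the piecewise-constant function $\nabla u$ telescopes, $\int_T \nabla u\,\d x = u(v_j)-u(v_{j-1})$, so $\nabla \IhNL u|_T = \frac{1}{h_T}\int_T \nabla u\,\d x$. Consequently $\nabla u-\nabla \IhNL u = \nabla u - \frac{1}{h_T}\int_T \nabla u$ is precisely the deviation of $\nabla u$ from its average on $T$, hence a zero-mean function on $T$, and the proposition becomes the statement $\Vert \nabla u - \frac{1}{h_T}\int_T \nabla u\Vert_{L^2(T)}\lesssim h_T\Vert \nabla^2 u\Vert_{L^2(T)}$.

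I would prove this last inequality by a direct discrete argument rather than invoking the continuum Poincaré inequality, since $\nabla u$ is only piecewise constant. Writing $a_i$ for the value of $\nabla u$ on the $i$-th unit cell of $T$ (for $i=1,\dots,m$ with $m=h_T$) and $\bar a$ for its mean, the telescoping identity $a_i-a_k=\sum_l (a_{l+1}-a_l)$ gives $|a_i-\bar a|\le \sum_{l=1}^{m-1}|a_{l+1}-a_l|\le \sqrt{m-1}\big(\sum_l (a_{l+1}-a_l)^2\big)^{1/2}$ by Cauchy–Schwarz; squaring and summing over $i$ yields $\sum_i (a_i-\bar a)^2\le m^2\sum_l (a_{l+1}-a_l)^2$. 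Since each unit cell has length one, the left side is exactly $\Vert \nabla u-\bar a\Vert_{L^2(T)}^2$ and the right side is $h_T^2\Vert\nabla^2 u\Vert_{L^2(T)}^2$, which is the desired bound with an $h$-independent constant.

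The only genuine obstacle is this final step, and it is conceptual rather than computational: one must fix the meaning of $\nabla^2 u$ for a piecewise-constant $\nabla u$ (here the finite-difference second derivative read as a piecewise-constant function, not the classical distributional derivative, which would be a sum of Dirac masses) and check that the resulting discrete Poincaré constant does not degenerate as $h_T\to\infty$. Once the mean-value identity of the first two paragraphs is established, the estimate is a routine summation-by-parts/Cauchy–Schwarz computation, so the proof is essentially self-contained.
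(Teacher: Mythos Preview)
Your argument is correct and follows the same route the paper indicates: the paper does not actually write out a proof but simply flags the proposition as ``obtained from Poincar\'e's inequality,'' which is exactly the reduction you carry out after observing that $\nabla\IhNL u|_T$ equals the mean of $\nabla u$ on $T$. Your concern about the meaning of $\nabla^2 u$ is not an obstacle in this setting: throughout the paper (and the a/c literature it builds on) the higher differences $\nabla^j u$ of a lattice function are by convention the piecewise-constant extensions of the finite differences, so your discrete Poincar\'e computation with $a_{l+1}-a_l$ is precisely $\Vert\nabla^2 u\Vert_{L^2(T)}^2$ as intended.
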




For each $T\in \ThNL$, we let $h_{T}:=\text{diam} (T)$. Thus, for $f,g \in \Un$, we define 
\begin{equation*}
	\langle f,g\rangle_{h}:= \int_{-N}^{N} \IhNL (f\cdot g)\,\d x=\sum_{j=1}^{J} \frac{1}{2} h_{T}\big\{f(v_{j-1})\cdot g(v_{j-1}) + f(v_{j})\cdot g(v_{j})\big\}.
\end{equation*}

The coarse-grained QNLL model we aim to solve is the following: 
\begin{equation}
	\label{Yh solution}
	\yh \in \argm \{\El (y_{h})-\langle f,y_{h} \rangle_{h}\ | \ y_{h}\in \YhNL\}.
\end{equation}

%
%
%


\subsubsection{Coarsening error of the internal forces}
\label{sec: internal_forces_qnll_cg}
The first variation of the continuum energy contribution $\int_{\OmeC} W(\nabla y)\text{d}x$ is given by
\begin{equation*}
	\int_{\OmeNL} \partial_{F} W(\nabla y)\nabla v \text{d}x+\int_{\OmeL} \partial_{F} W_{\text{L}}(\nabla y)\nabla v \,\textrm{d}x.
\end{equation*}
Elements of $\UhNL$ are defined pointwise, but give rise to lattice functions through point evaluation. Since finite element nodes lie on lattice sites, this is compatible with our interpolation of lattice functions.

The following lemma estimates the error contribution from this operator induced by finite element coarsening and reduction to a finite domain.

\begin{theorem}\label{Internal forces of continuum region}
	Let $u \in \mathscr{U}$ satisfy $(\mathbf{DH})$, and $0<\bar{K} <L \le N/2$. Then
	\begin{equation*}
		\begin{aligned}
			\Bigg \vert \int_{\OmeNL} \big(\partial_{F} W(\nabla \IhNL y) &- \partial_{F} W(\nabla y)\big) \nabla v_{h} \, \text{d} x + \int_{\OmeL} \big(\partial_{F} \WL (\nabla \IhNL y) - \partial_{F} \WL (\nabla y)\big) \nabla v_{h} \,\text{d} x \Bigg \vert \\
			&\lesssim M^{(2,0)}\Vert h\nabla^{2}u\Vert^{2}_{L^{4}(\OmeNL)} \Vert \nabla v_{h} \Vert_{L^{2}}, \quad \text{for all}\  v_{h} \in \text{P1}(\mathcal{T}_{h}).\\
		\end{aligned}
	\end{equation*}
\end{theorem}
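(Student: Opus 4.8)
The plan is to localise everything to a single element $T\in\ThNL$ and to exploit that $v_h\in\text{P1}(\ThNL)$ has an element-wise constant gradient, so that $\nabla v_h|_T=:c_T$ can be pulled out of each elemental integral. The decisive structural fact, special to one dimension, is that the nodal interpolant reproduces the element average of the gradient: on $T=[v_{j-1},v_j]$ one has $\nabla\IhNL y|_T=\frac{1}{h_T}\int_T\nabla y\,\d x=:\bar F_T$, so that $\int_T(\nabla\IhNL y-\nabla y)\,\d x=0$. This orthogonality is what promotes a naive first-order interpolation bound into the quadratic (squared) bound on the right-hand side.

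First I would dispose of the linear region entirely. Since $\pGWL(\nabla y)=\Wpf+\Wppf\nabla u$ is affine in $\nabla u$, on each $T\subset\OmeL$ we get $\pGWL(\nabla\IhNL y)-\pGWL(\nabla y)=\Wppf(\nabla\IhNL u-\nabla u)$, hence $\int_T\big(\pGWL(\nabla\IhNL y)-\pGWL(\nabla y)\big)\nabla v_h\,\d x=c_T\Wppf\int_T(\nabla\IhNL u-\nabla u)\,\d x=0$ by the orthogonality above. Thus the $\OmeL$ contribution vanishes exactly, which explains why the stated estimate only involves norms over $\OmeNL$. For the nonlinear region I would Taylor expand $\pGW$ about $\bar F_T$ on $T\subset\OmeNL$, writing $\pGW(\nabla y)=\pGW(\bar F_T)+\ppGW(\bar F_T)(\nabla y-\bar F_T)+\tfrac12 R(x)$. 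Integrating against the constant $c_T$, the zeroth-order term cancels against $\pGW(\nabla\IhNL y)=\pGW(\bar F_T)$, the linear term integrates to zero by orthogonality, and only the remainder survives. Bounding $R$ by the Lipschitz constant of $\ppGW$ (of order $M^{(2,0)}$) times $|\nabla y-\bar F_T|^2$ gives the elemental estimate $\big|\int_T(\pGW(\nabla\IhNL y)-\pGW(\nabla y))\nabla v_h\,\d x\big|\lesssim M^{(2,0)}|c_T|\int_T|\nabla y-\bar F_T|^2\,\d x$.

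To reach the $L^4$-based bound I would use the pointwise interpolation estimate $|\nabla y(x)-\bar F_T|=|\nabla u(x)-\overline{\nabla u}_T|\le\|\nabla^2 u\|_{L^1(T)}$ followed by Hölder, $\|\nabla^2 u\|_{L^1(T)}\le h_T^{3/4}\|\nabla^2 u\|_{L^4(T)}$, which yields $\int_T|\nabla y-\bar F_T|^2\,\d x\lesssim h_T^{5/2}\|\nabla^2 u\|^2_{L^4(T)}$. Combining this with the scaling $|c_T|=h_T^{-1/2}\|\nabla v_h\|_{L^2(T)}$ collapses the elemental factor to $\|\nabla v_h\|_{L^2(T)}\,\|h\nabla^2 u\|^2_{L^4(T)}$, since $h_T^2\|\nabla^2 u\|^2_{L^4(T)}=\|h\nabla^2 u\|^2_{L^4(T)}$. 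Summing over $T\subset\OmeNL$ and applying a discrete Cauchy--Schwarz inequality, $\sum_T \|\nabla v_h\|_{L^2(T)}\|h\nabla^2 u\|^2_{L^4(T)}\le\big(\sum_T\|\nabla v_h\|^2_{L^2(T)}\big)^{1/2}\big(\sum_T\|h\nabla^2 u\|^4_{L^4(T)}\big)^{1/2}$, then gives exactly $\|\nabla v_h\|_{L^2}\,\|h\nabla^2 u\|^2_{L^4(\OmeNL)}$.

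The hard part will not be any single estimate but the bookkeeping of the powers of $h_T$: the orthogonality forces us to the second-order remainder, and one must thread the Hölder exponent ($|T|^{3/4}$ for the $L^1\!\to\!L^4$ passage) together with the inverse scaling $h_T^{-1/2}$ of $c_T$ so that the $h_T$ powers cancel to leave precisely the weight $h$ inside the $L^4$ norm. I would also take care that the two observations used repeatedly — that $\nabla\IhNL y|_T$ is the \emph{exact} element average of $\nabla y$, and that $v_h$ has constant gradient per element — are the only places the P1 structure enters, so that the argument is transparently the reason the linear-region error is annihilated and the nonlinear-region error is quadratic.
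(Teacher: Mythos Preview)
Your proposal is correct and follows essentially the same route as the paper: Taylor expand $\pGW$ about the elementwise-constant value $\nabla\IhNL y|_T$, use the orthogonality $\int_T(\nabla\IhNL u-\nabla u)\,\d x=0$ (together with $\nabla v_h|_T$ constant) to kill both the entire $\OmeL$ integral and the linear term in $\OmeNL$, bound the quadratic remainder by the interpolation estimate, and finish with H\"older/Cauchy--Schwarz to reach the $L^4$ bound. One small wording point: the remainder constant is the Lipschitz bound of $\ppGW$, i.e.\ a third-derivative quantity $M^{(3,0)}$ (which is what the paper's proof actually uses), not $M^{(2,0)}$ as in the stated theorem --- your argument is unaffected, but you should label the constant accordingly.
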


\begin{proof}

	Firstly, we calculate the first integral on nonlinear region. After using the fact that $\int_{T}\nabla \IhNL u\,\d x=\int_{T} \nabla u\,\d x$, for any $T\in \OmeNL$, we have
	\begin{equation*}
		\begin{aligned}
			\Bigg \vert \int_{T} \partial_{F} W(\nabla \IhNL y) - \partial_{F} W(\nabla y) \,\d x \Bigg \vert &\le \Bigg \vert \int_{T}\partial^{2}_{F}W(\nabla\IhNL y)(\nabla\IhNL u -\nabla u)\,\d x \Bigg \vert\\
			&\quad + M^{(3,0)}\int_{T}\vert \nabla \IhNL u-\nabla u\vert^{2}\,\d x\\
			&\lesssim \Vert \nabla \IhNL u -\nabla u\Vert^{2}_{L^{2}(T)}\lesssim \Vert h\nabla^{2} u\Vert^{2}_{L^{2}(T)}.
		\end{aligned}
	\end{equation*}
	
	Summing over $T\subset \ThNL$ and again applying H$\ddot{\text{o}}$lder‘s inequality yields
	\begin{equation*}
		\sum_{T \in \ThNL} \Vert h\nabla^{2} u\Vert^{2}_{L^{2}(T)}\Vert \nabla v_{h} \Vert_{L^{2}(T)}
		\le \Vert h\nabla^{2} u\Vert^{2}_{L^{4}(\OmeNL)}\Vert \nabla v_{h} \Vert_{L^{2}(\OmeNL)}.
	\end{equation*}

	Next we focus on the linear region, after using the definition of $\WL(F)$, and we have
	\begin{equation*}
		\partial_{F} W_{\text{L}}(\nabla I_{h} u) - \partial_{F} W_{\text{L}}(\nabla u) =\Wppf(\nabla I_{h}u - \nabla u).
	\end{equation*}
	Moreover, using the fact that $\int_{T} \nabla I_{h}u \text{d}x = \int_{T} \nabla u\text{d}x$, since $\nabla v_{h} $ is constant in $T$, we have
	\begin{equation*}
		\begin{aligned}
			\int_{T} \Wppf (\nabla I_{h}u - \nabla u) \nabla v_{h} \,\text{d}x = \Wppf \  \nabla v_{h}\vert_{T} \cdot \int_{T} (\nabla I_{h}u - \nabla u)\,\text{d}x= 0.
		\end{aligned}
	\end{equation*}
	
\end{proof}

\subsubsection{Coarsening error of external forces}
\label{sec: external_forces_qnll_cg}
We now address the consistency error arising from approximating the external potential $\langle f, v_{h} \rangle_{\Z}$ using the trapezoidal rule, denoted as $\langle f, v_{h} \rangle_{h}$. A key challenge in this analysis is to avoid relying on the Poincaré inequality $\Vert v_{h} \Vert_{L^{2}} \lesssim N\Vert \nabla v_{h} \Vert_{L^{2}}$. Instead, we utilize weighted Poincaré inequalities, which are more suitable for unbounded domains. This approach leads to the following result. The lemma presented here is adapted from~\cite[Theorem 6.13]{2013_ML_CO_AC_Coupling_ACTANUM}, but we state the theorem directly and omit the proof for brevity.

\begin{lemma}
	Let $L>1,\omega(x) = x\log(x)$ and suppose that $h(x)\le \kappa x$ for almost every $x\in [-N,-\bar{K}\cup[\bar{K},N]$. And we note $[-\infty,-\bar{K}]\cup[\bar{K},+\infty]$ by $\tOmeC$Then there exists a constant $C_{\kappa}$ such that
	\begin{equation*}
		\Vert \eta_{\text{ext}}\Vert_{(\YhNL)^{*}} = \Vert \langle f,\cdot\rangle_{N} - \langle f,\cdot\rangle_{h}\Vert_{(\YhNL)^{*}} \lesssim \Vert h^{2} \nabla f\Vert_{L^{2}(\tOmeC)} +\frac{C_{\kappa}}{\log L} \Vert h^{2}\omega \nabla^{2}f\Vert_{L^{2}(\tOmeC)}.
	\end{equation*}
\end{lemma}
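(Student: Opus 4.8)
The plan is to recast $\eta_{\text{ext}}$ as an interpolation (quadrature) error and then to split the resulting dual-norm estimate into a gradient contribution, handled by a plain Cauchy--Schwarz inequality, and a Hessian contribution, handled by a weighted Poincar\'e inequality on the coarsened far field. First I would reformulate the functional. Writing $I_{1}$ for piecewise-affine nodal interpolation on $\Z$ and using that every $v_{h}\in\YhNL$ vanishes at $\pm N$, the boundary terms of the unit-spacing trapezoidal rule drop out, so that $\langle f,v_{h}\rangle_{N}=\sum_{\xi}f(\xi)v_{h}(\xi)=\int_{-N}^{N}I_{1}(fv_{h})\,\d x$, while by definition $\langle f,v_{h}\rangle_{h}=\int_{-N}^{N}\IhNL(fv_{h})\,\d x$. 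Hence $\eta_{\text{ext}}(v_{h})=\int_{-N}^{N}(I_{1}-\IhNL)(fv_{h})\,\d x$. On every element of unit length the two interpolants coincide, so the integrand vanishes on the fully resolved atomistic and interface region and the error localizes to the genuinely coarsened continuum, which lies in $\tOmeC$; this is the origin of the restriction of the right-hand side norms to $\tOmeC$.

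Next I would estimate the error elementwise. On a coarse element $T=[v_{j-1},v_{j}]$ the quantity $\int_{T}(I_{1}-\IhNL)(fv_{h})\,\d x$ is precisely the difference between the composite unit-spacing trapezoidal rule and the single-interval trapezoidal rule for $\int_{T}fv_{h}\,\d x$; since a single trapezoid over a long element carries the dominant error, the standard trapezoidal estimate gives $\big|\int_{T}(I_{1}-\IhNL)(fv_{h})\,\d x\big|\lesssim h_{T}^{2}\,\Vert(fv_{h})''\Vert_{L^{1}(T)}$, where the second derivative of the product is taken in the sense of a measure so as to account for the kinks of $f$ at lattice sites. Because $v_{h}$ is affine on $T$, its second difference vanishes there and the discrete Leibniz rule leaves $(fv_{h})''=(\nabla^{2}f)\,v_{h}+2(\nabla f)(\nabla v_{h})$. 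Summing over the coarse elements and writing $h$ for the piecewise-constant mesh-size function, I obtain $|\eta_{\text{ext}}(v_{h})|\lesssim\int_{\tOmeC}h^{2}\,|\nabla f|\,|\nabla v_{h}|\,\d x+\int_{\tOmeC}h^{2}\,|\nabla^{2}f|\,|v_{h}|\,\d x$.

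Finally I would bound the two integrals. For the gradient term a single Cauchy--Schwarz inequality gives $\int_{\tOmeC}h^{2}|\nabla f||\nabla v_{h}|\,\d x\le\Vert h^{2}\nabla f\Vert_{L^{2}(\tOmeC)}\Vert\nabla v_{h}\Vert_{L^{2}}$, producing the first term of the claim with no Poincar\'e inequality. The Hessian term is the crux: here the naive bound $\Vert v_{h}\Vert_{L^{2}}\lesssim N\Vert\nabla v_{h}\Vert_{L^{2}}$ must be avoided because of its growth in the domain size. Instead I would represent $v_{h}$ through its gradient from the outer boundary, where $v_{h}=0$, and pair the result against the weighted norm $\Vert h^{2}\omega\nabla^{2}f\Vert_{L^{2}(\tOmeC)}$ by a weighted Cauchy--Schwarz inequality, possibly after an integration by parts that transfers one derivative from $f$ onto $v_{h}$ (the transferred part folds back into the gradient contribution, while the mesh-variation remainder is controlled through the grading hypothesis $h\le\kappa x$). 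The borderline one-dimensional weighted Poincar\'e/Hardy inequality with the critical weight $\omega(x)=x\log x$ then supplies the logarithmic gain, so that this contribution is bounded by $\tfrac{C_{\kappa}}{\log L}\Vert h^{2}\omega\nabla^{2}f\Vert_{L^{2}(\tOmeC)}\Vert\nabla v_{h}\Vert_{L^{2}}$. Taking the supremum over $v_{h}$ with $\Vert\nabla v_{h}\Vert_{L^{2}}=1$ yields the stated estimate.

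I expect the main obstacle to be exactly this weighted Poincar\'e step together with the sharp constant $C_{\kappa}/\log L$: one must exploit the critical (logarithmic) Hardy inequality on the half-line, track how the grading constant $\kappa$ enters through $h\le\kappa x$, and verify that the weight $\omega=x\log x$ is compatible with the coarsened mesh. This is precisely the technical ingredient imported from the cited framework; by contrast the reformulation and the elementwise trapezoidal estimate are routine, the only minor bookkeeping being the correct half-weighting of shared mesh nodes when localizing $\langle f,v_{h}\rangle_{N}$ and $\langle f,v_{h}\rangle_{h}$, and the cancellation of boundary terms ensured by $v_{h}(\pm N)=0$.
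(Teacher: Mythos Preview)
The paper does not give its own proof of this lemma: immediately after the statement it says that the result is adapted from \cite[Theorem 6.13]{2013_ML_CO_AC_Coupling_ACTANUM} and that the proof is omitted for brevity. Your proposal reconstructs precisely the argument of that reference: rewrite $\langle f,\cdot\rangle_{N}-\langle f,\cdot\rangle_{h}$ as $\int (I_{1}-\IhNL)(fv_{h})\,\d x$, localize to the coarsened continuum elements, apply the elementwise trapezoidal error $\lesssim h_{T}^{2}\Vert (fv_{h})''\Vert_{L^{1}(T)}$, expand via Leibniz (using $v_{h}''=0$ on each coarse element), bound the $\nabla f\,\nabla v_{h}$ part by Cauchy--Schwarz, and control the $\nabla^{2}f\,v_{h}$ part through the critical logarithmic Hardy/Poincar\'e inequality with weight $\omega(x)=x\log x$, which is where the mesh-grading hypothesis $h\le\kappa x$ and the factor $C_{\kappa}/\log L$ enter. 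So your outline is correct and coincides with the approach the paper defers to; the one genuinely nontrivial step you flag, the weighted Poincar\'e inequality producing the $(\log L)^{-1}$ gain, is exactly the technical ingredient supplied by the cited framework.
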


\subsection{A priori existence and error estimate}
\label{sec: priori_anal_qnll_cg}

Next, we consider the a priori error estimation between the coarse-grained QNLL model and the atomistic model. Based on the inverse function theorem and Theorem~\ref{Internal forces of continuum region}, we can obtain the following result:

\begin{theorem}
	Let $\yai \in \Ya$ be a strongly stable atomistic solution satisfying \eqref{All-Atomistic strong local minimizer} and $\DH$. Consider the QNLL problem \eqref{Yh solution} with quasi-optimal choice of $N, \Th$. Suppose, moreover, that $\El$ is stable in the reference state Theorem \ref{Stability}. Then, there exists $K_0$ such that, for all $K \ge K_0$, \eqref{Yh solution} has a locally unique, strongly stable solution $y^{\text{NL-L}}_{h}$ which satisfies
	\begin{equation}
		\begin{aligned}
			\Vert \nabla\yai - \nabla y^{\text{NL-L}}_{h} \Vert_{L^2} \lesssim 8&M^{(3,0)}(\Vert \nabla^{2} u\Vert_{L^{2}(\bOmeI)} +\Vert \nabla^{3}u \Vert_{L^{2}(\bOmeC)}+\Vert \nabla^{2}u \Vert^{2}_{L^{4}(\bOmeC)}\\
			&+ \Vert \nabla u \Vert^{2}_{L^{4}(\OmeL)}+\Vert h \nabla^{2} u\Vert_{L^{2}(\OmeC)}+N^{\frac{1}{2}-\alpha})/\big(\min(c_{0},\ganllF)\big)^2.
		\end{aligned}
	\end{equation}
	
\end{theorem}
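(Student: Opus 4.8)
The plan is to run the argument of Theorem~\ref{Priori of NCG} essentially verbatim in structure, replacing the admissible set $\Yn$ by the coarse-grained set $\YhNL$ and linearizing the residual at the nodal interpolant $\IhNL\ya$ rather than at $\ya$ itself. Concretely, I would define the coarse-grained residual functional $\Ghc(y_{h}):=\delta\El(y_{h})-\langle f,\cdot\rangle_{h}$ on $\YhNL$ and apply the quantitative inverse function theorem (Lemma~\ref{Inverse function theorem}). It then suffices to (i) bound the residual $\Vert\Ghc(\IhNL\ya)\Vert_{(\YhNL)^{*}}$, (ii) establish a uniform coercivity bound $\langle\delta^{2}\El(\IhNL\ya)v_{h},v_{h}\rangle\ge\gamma\Vert\nabla v_{h}\Vert_{L^{2}}^{2}$ on $\UhNL$, and (iii) assemble the final estimate through the triangle inequality.

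For the residual I would exploit that every lattice site in $\Ac\cup\Ic$ is a mesh node, so $\IhNL\ya=\ya$ there and coarsening is confined to the continuum region. Inserting $\pm\langle\delta\El(\ya),v_{h}\rangle$, $\pm\langle\delta\Ea(\ya),v_{h}\rangle$ and $\pm\langle f,v_{h}\rangle_{N}$ gives, for $v_{h}\in\UhNL$,
\begin{align*}
  \langle\Ghc(\IhNL\ya),v_{h}\rangle
  &= \langle\delta\El(\IhNL\ya)-\delta\El(\ya),v_{h}\rangle
     + \big(\langle\delta\El(\ya),v_{h}\rangle-\langle\delta\Ea(\ya),v_{h}\rangle\big)\\
  &\quad + \big(\langle\delta\Ea(\ya),v_{h}\rangle-\langle f,v_{h}\rangle_{N}\big)
     + \big(\langle f,v_{h}\rangle_{N}-\langle f,v_{h}\rangle_{h}\big).
\end{align*}
The first bracket is the internal-force coarsening error, controlled by Theorem~\ref{Internal forces of continuum region}; the second is $-T(\ya)$, controlled by the consistency estimate~\eqref{QNLL consistency error estimate}; the third vanishes since $\ya$ solves~\eqref{Atomistic solution condition} and $\UhNL\subset\Un$; the fourth is the external-force (trapezoidal) coarsening error, controlled by the weighted-Poincar\'e estimate of the preceding lemma. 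Under the quasi-optimal choice of $N$ and $\Th$, both coarsening contributions carry an extra power of $h$ (respectively are genuinely quadratic in $h\nabla^{2}u$) and are therefore of higher order, so the residual is dominated by exactly the terms already appearing in~\eqref{QNLL consistency error estimate}.

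For stability I would first invoke Theorem~\ref{Stability}, which holds for every $v\in\Ua\supset\UhNL$, giving coercivity of $\delta^{2}\El(\ya)$ up to the $K^{-\min(1,\alpha)}$ defect; I then transfer this to $\IhNL\ya$ using the Lipschitz bound~\eqref{scaling assumption}, since $\Vert\nabla\IhNL\ya-\nabla\ya\Vert_{L^{2}}\lesssim\Vert h\nabla^{2}u\Vert_{L^{2}(\OmeC)}$ is small by the interpolation Proposition. Taking $\gamma:=\tfrac12\min(c_{0},\ganllF)$ and $K$ (and the mesh) large then yields coercivity on $\UhNL$, and the inverse function theorem delivers a locally unique, strongly stable $\yh$ with $\Vert\nabla\yh-\nabla\IhNL\ya\Vert_{L^{2}}\lesssim M^{(3,0)}\Vert\Ghc(\IhNL\ya)\Vert_{(\YhNL)^{*}}/\gamma^{2}$ (the factor $8$ in the statement arising from $\gamma^{2}=\tfrac14(\min(c_{0},\ganllF))^{2}$). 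Finally the triangle inequality
\[
  \Vert\nabla\yai-\nabla\yh\Vert_{L^{2}}
  \le \Vert\nabla\yai-\nabla\ya\Vert_{L^{2}}
     + \Vert\nabla\ya-\nabla\IhNL\ya\Vert_{L^{2}}
     + \Vert\nabla\IhNL\ya-\nabla\yh\Vert_{L^{2}}
\]
combines the truncation error~\eqref{Truncation error} ($N^{\frac12-\alpha}$), the interpolation error $\Vert h\nabla^{2}u\Vert_{L^{2}(\OmeC)}$, and the residual bound into the claimed estimate. It is worth noting that the new leading coarsening term $\Vert h\nabla^{2}u\Vert_{L^{2}(\OmeC)}$ enters not through the residual but through this interpolation step.

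The main obstacle I expect is the stability step, not the residual bookkeeping: one must guarantee that neither the finite-element coarsening nor the linearization erodes the coercivity constant below $\tfrac12\min(c_{0},\ganllF)$, which forces the mesh-grading condition $h\le\kappa x$ to be strong enough that $\Vert\nabla\IhNL\ya-\nabla\ya\Vert_{L^{2}}$ stays $o(1)$ while keeping the external-force term bounded via the weighted Poincar\'e inequality (the whole point of avoiding the $N$-dependent bound $\Vert v_{h}\Vert_{L^{2}}\lesssim N\Vert\nabla v_{h}\Vert_{L^{2}}$). The remaining careful point is reconciling the quadratic $L^{4}$ internal-force coarsening term of Theorem~\ref{Internal forces of continuum region} with the linear $L^{2}$ interpolation term that actually survives in the final bound; this is resolved by observing that the former is genuinely higher order under $\DH$ and is absorbed.
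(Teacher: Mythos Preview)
Your proposal is correct and follows essentially the same route as the paper, whose proof is a single sentence stating that one repeats Theorem~\ref{Priori of NCG} ``with the difference being the inclusion of coarse-grained error in $\eta$ during the application of the inverse function theorem.'' Your version is in fact considerably more explicit than the paper's: you spell out that the linearization point must be $\IhNL\ya\in\YhNL$ (which the paper leaves implicit), you give the four-term residual decomposition, and you correctly identify that the leading coarsening term $\Vert h\nabla^{2}u\Vert_{L^{2}(\OmeC)}$ enters through the interpolation leg of the triangle inequality rather than through the residual itself (the internal-force coarsening contribution of Theorem~\ref{Internal forces of continuum region} being genuinely quadratic), a nuance the paper's wording ``inclusion of coarse-grained error in $\eta$'' slightly obscures.
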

\begin{proof}
	
	The proof process here is similar to Theorem \ref{Priori of NCG}, with the difference being the inclusion of coarse-grained error in $\eta$ during the application of the inverse function theorem.

\end{proof}

\subsection{Discussion of the (quasi-)optimal choice of the length of the nonlinear and linear region}
\label{Balance of QNLL CG model}


In this subsection, we will discuss how to achieve the quasi-optimal choice of the lengths for finite element mesh $h$, nonlinear continuum region, linear continuum region, and computational region to obtain quasi-optimal convergence order for the QNLL model. We observe that due to this error balance, we only need a very short nonlinear region, this is the key motivation that we introduce this coupling methods to gain the same accuracy but a much more efficient method.

\subsubsection{Optimizing the finite element grid}
\label{sec: choice_of_fem_cg}
The finite mesh size $h$ is the first approximation parameter that we will optimize. In this section, we use a classical technique to optimize the mesh grading.

For each $x\in [-N,N], \ x\in \text{int} T$, let $h(x):=h_{T}$. For $x<-N$ or $x>N$, let $h(x):=1$. The coarse-grained error occurring in the coarsening analysis that depends on $\ThNL$ are the interpolation error term $\Vert h \nabla^{2}u\Vert_{L^{2}(\OmeC)}$. Suppose that $u\in\UhNL$ satisfies $\DH$ and $L >r_{0}$. Then
\begin{equation*}
	\Vert h \nabla^{2}u\Vert_{L^{2}(\OmeC)} \lesssim \Vert h x^{-\alpha-1}\Vert_{L^{2}(\OmeC)}.
\end{equation*}
We wish to choose $h$ to minimize this quantity, subject to fixing the number of degrees of freedom $\NhNL$, which is given by
\begin{equation*}
	\NhNL=\sum_{j=1}^{\NhNL}1=\sum_{j=1}^{\NhNL} h_{j}\frac{1}{h_{j}}=\int_{-N}^{N}\frac{1}{h} \,\d x.
\end{equation*}
We ignore the discreteness of the mesh size function and solve
\begin{equation*}
	\min \Vert h x^{-\alpha-1}\Vert_{L^{2}(\OmeC)} \quad \text{subject to }\int_{-N}^{N} \frac{1}{h}\,\d x = \text{const}.
\end{equation*}
The solution to this variational problem satisfies
\begin{equation*}
	h(x)=\lambda\vert x\vert^{\frac{2}{3}(\alpha+1)} \ \text{for }x\in \OmeC.
\end{equation*}
for some constant $\lambda>0$. This gives us an optimal scaling of the mesh size function.

We now impose the condition $h(L)\approx 1$, which yields
\begin{equation}\label{Mesh size fucntion of nonlinear h}
	h^{\text{NL}}(x)\approx(\frac{\vert x\vert}{L})^{\frac{2}{3}(\alpha+1)}=:\tilde{h}(x) \ \text{for }x\in\OmeC.
\end{equation}

If $\alpha':=\frac{2}{3}(\alpha+1)$, then $\alpha'>1$, and hence this choice of $h$ yields(for simplify, we only calculate the domain$[\bar{K},N]$)
\begin{equation}\label{int of nonlinear h}
	\int_{\bar{K}}^{N} \frac{1}{h} \,\d x \approx \frac{\bar{K}^{\alpha'}(N^{1-\alpha'}-\bar{K}^{1-\alpha'})}{1-\alpha'}\approx \frac{\bar{K}}{\alpha'-1}.
\end{equation}

Thus, the choice \eqref{Mesh size fucntion of nonlinear h} gives a comparable number of degrees of freedom in the linear region to that in the atomistic, interface and continuum regions. The resulting interpolation error bound can be estimated by
\begin{equation}\label{Decay of the nonlinear best approximation term}
	\Vert h x^{-\alpha-1}\Vert_{L^{2}(\OmeC)} \approx \frac{\bar{K}^{\frac{1}{2}-(\alpha+1)}}{(\alpha'-1)^{\frac{1}{2}}} \approx \bar{K}^{-\frac{1}{2}-\alpha}.
\end{equation}

\subsubsection{The quasi-optimal choice of $N$}
\label{sec: choice_of_N_cg}

Before introducing specific finite element mesh generation algorithm, we need to discuss how to determine the length of our computational domain $N$. We follow two principles:
\begin{enumerate}
	\item We should ensure that the truncation error term $N^{\frac{1}{2}-\alpha}$ do not dominate among the various types of errors after balancing the length of the computational domain;
	
	\item We choose the length of the computational domain as small as possible for computational simplicity.
\end{enumerate}

According to the first principle mentioned above, we understand that the truncation error $N^{\frac{1}{2}-\alpha}$ must be balanced against one of the terms of modelling error or coarsening error (or higher-order terms). According to the second principle, to select the computational domain length as small as possible, we must balance it against the lowest-order term of modelling error or coarsening error (balancing against higher-order terms would need a longer computational domain length).

For $\frac{1}{2}< \alpha< 1$, the lowest-order term is the linearization error $\Vert \nabla u \Vert^{2}_{L^{4}(\OmeL)}\lesssim L^{\frac{1}{2}-2\alpha}$, we should choose $N$ such that
\begin{equation*}
	L^{\frac{1}{2}-2\alpha}\approx N^{\frac{1}{2}-\alpha}, \quad \text{that is}, \ N\approx L^{\frac{2\alpha-1/2}{\alpha-1/2}}.
\end{equation*}

For $ \alpha\ge 1$, the lowest-order term is the coarse-grained error $\Vert h\nabla^2 u \Vert_{L^{2}(\OmeC)}\lesssim \bar{K}^{-\frac{1}{2}-\alpha}$, we choose $N$ such that
\begin{equation*}
	\bar{K}^{-\frac{1}{2}-\alpha}\approx N^{\frac{1}{2}-\alpha}, \quad \text{that is}, \ N\approx \bar{K}^{\frac{\alpha+1/2}{\alpha-1/2}}.
\end{equation*}

We now turn this formal motivation into an explicit construction of the finite element mesh. 
	
	

\begin{algorithm}[H]
	\caption{Finite element mesh construction algorithm}
	\label{alg:FEM}
	\begin{enumerate}
		\item[Step 1]: Set $N:= \lceil \bar{K}^{\frac{\alpha+1/2}{\alpha-1/2}}\rceil$($\alpha>1$); $N:= \lceil \bar{K}^{\frac{2\alpha-1/2}{\alpha-1/2}}\rceil$($\frac{1}{2}< \alpha< 1$). 
		\item[Step 2]: Set $\NhNL :=\{0, 1, \dots, \bar{K}\}$.  		
		\item[Step 3]: While $n:=\max (\NhNL) <L$:
		\begin{enumerate}
			\item[Step 3.1]: Set $\NhNL:=\NhNL \cup \{\min(L,n+\lfloor \tilde{h}(n)\rfloor ) \}$.
		\end{enumerate}
		\item[Step 4]: While $n:=\max (\NhNL) <N$:
		\begin{enumerate}
			\item[Step 4.1]: Set $\NhNL:=\NhNL \cup \{\min(N,n+\lfloor \tilde{h}(n)\rfloor ) \}$.
		\end{enumerate}
		\item[Step 5]: Set $\NhNL = (-\NhNL) \cup \NhNL$.
	\end{enumerate}
\end{algorithm}






Meshes construct via this algorithm qualitatively the same properties as predicted by the formal computations \eqref{Mesh size fucntion of nonlinear h} and \eqref{int of nonlinear h}.
\begin{theorem}\label{Decay result of nonlinear best approximation term}
	Let $u\in \Ua$ satisfy $\DH$ and let $N$ and $\ThNL$ be constructed by Algorithm \ref{alg:FEM}. Then, for $L$ sufficiently large, $\NhNL\le C_{1} L$,
	\begin{equation*}
		\begin{aligned}
			\Vert h\nabla^{2} u\Vert_{L^{2}(\OmeC)} &\le C_{2} \NhNL^{-\frac{1}{2}-\alpha},  \\
			\Vert h\nabla^{2} u\Vert^{2}_{L^{4}(\OmeNL)} &\le C_{3} \NhNL^{-\frac{1}{2}-2\alpha},
		\end{aligned}
	\end{equation*}
	where $C_{1}$ depends on $\alpha$ and $C_{2}, C_{3}$ depends on $\alpha$ and on $\CDH$.
\end{theorem}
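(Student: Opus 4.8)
The plan is to turn the heuristic computations \eqref{Mesh size fucntion of nonlinear h} and \eqref{int of nonlinear h} into rigorous bounds that account for the floor operations and the discreteness of the mesh built by Algorithm \ref{alg:FEM}. First I would establish the bound on the number of degrees of freedom $\NhNL \le C_1 L$. By construction the nodes in $[\bar{K},N]$ are generated by stepping forward by $\lfloor \tilde{h}(n)\rfloor$, so the count of nodes is controlled by $\int_{\bar K}^N \tilde h(x)^{-1}\,\d x$ up to a correction from replacing $\lfloor \tilde h\rfloor$ by $\tilde h$ and from the terminal clamping to $L$ and $N$. Since $\alpha' = \tfrac{2}{3}(\alpha+1) > 1$ for $\alpha > 1/2$, the estimate \eqref{int of nonlinear h} shows this integral is $\approx \bar K/(\alpha'-1) \lesssim L$; the floor only improves the count (it can only make steps shorter, hence more nodes, so one must be careful here and instead use $\lfloor \tilde h(n)\rfloor \ge \tfrac12 \tilde h(n)$ for $n$ large to get a matching upper bound on node count). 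Because $\bar K \le L$, this yields $\NhNL \le C_1 L$ with $C_1$ depending only on $\alpha$.

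Next I would prove the interpolation bound $\Vert h \nabla^2 u\Vert_{L^2(\OmeC)} \le C_2 \NhNL^{-1/2-\alpha}$. The key is that on each element $T\subset[\bar K,N]$ the actual mesh size satisfies $h_T \le \tilde h(v_{j-1})+1 \lesssim \tilde h(x)$ for $x\in T$ (using that $\tilde h$ is increasing and bounded below by $1$ on $\OmeC$), so up to constants we may replace $h$ by $\tilde h$. Then by the decay hypothesis $\DH$, $\vert\nabla^2 u(x)\vert \lesssim \CDH x^{-\alpha-1}$, and substituting $\tilde h(x) = (|x|/L)^{\alpha'}$ reduces the problem to evaluating $\int_{\bar K}^N (x/L)^{2\alpha'} x^{-2\alpha-2}\,\d x$, which is exactly the computation \eqref{Decay of the nonlinear best approximation term} giving $\approx \bar K^{-1/2-\alpha}$. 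Combining with $\bar K \approx \NhNL$ (from the DoF count, since the number of rep-atoms is comparable to $\bar K$ by \eqref{int of nonlinear h}) produces the stated rate. The $L^4$ estimate $\Vert h\nabla^2 u\Vert^2_{L^4(\OmeNL)} \le C_3 \NhNL^{-1/2-2\alpha}$ follows by the identical substitution, now with the fourth-power integral over $\OmeNL$, whose exponent bookkeeping yields the $-1/2-2\alpha$ rate.

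The main obstacle I anticipate is the careful treatment of the floor function $\lfloor \tilde h(n)\rfloor$ and the clamping in Steps 3.1 and 4.1, which break the clean scaling relations of the continuous heuristic. Specifically, near the inner edge $x \approx \bar K$ the mesh size $\tilde h$ is close to $1$, so $\lfloor \tilde h(n)\rfloor$ may equal $1$ and introduce an $O(\bar K)$ cluster of unit-spaced atoms; one must verify this does not inflate the DoF count beyond $C_1 L$ nor degrade the error rate. I would handle this by splitting $\OmeC$ into a region where $\tilde h \le 2$ (treated by the trivial bound $h_T = 1$ and the direct decay estimate, contributing a lower-order term) and a region where $\tilde h \ge 2$ (where $\lfloor \tilde h\rfloor \ge \tfrac12 \tilde h$ makes the continuous and discrete scalings comparable). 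The terminal clamps only affect a single element at $L$ and at $N$ and contribute negligibly. Once these edge cases are absorbed into the constants $C_2, C_3$, the global bounds follow by summing the elementwise estimates and applying $\bar K \approx \NhNL$.
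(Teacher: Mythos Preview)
Your proposal is correct and follows exactly the approach the paper intends: the paper does not give a detailed proof of this theorem, instead presenting only the formal computations \eqref{Mesh size fucntion of nonlinear h}, \eqref{int of nonlinear h}, and \eqref{Decay of the nonlinear best approximation term} as motivation and then asserting that meshes constructed by Algorithm~\ref{alg:FEM} inherit ``qualitatively the same properties.'' Your plan supplies precisely the missing rigor---handling the floor in $\lfloor\tilde h(n)\rfloor$ via the split $\tilde h\le 2$ versus $\tilde h\ge 2$, absorbing the terminal clamps at $L$ and $N$, and converting the continuous integral bounds into elementwise sums---and these are the standard steps one takes to upgrade such heuristics.

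One minor remark: your DoF argument actually yields the sharper bound $\NhNL\lesssim \bar K$ (since both the atomistic nodes and the continuum count \eqref{int of nonlinear h} scale like $\bar K$), which is what you need to pass from $\bar K^{-1/2-\alpha}$ to $\NhNL^{-1/2-\alpha}$; the stated bound $\NhNL\le C_1 L$ then follows trivially from $\bar K\le L$. Also be aware that the paper's normalization in \eqref{Mesh size fucntion of nonlinear h} appears to contain a typo---the computation \eqref{int of nonlinear h} only works if $\tilde h(x)=(|x|/\bar K)^{\alpha'}$ rather than $(|x|/L)^{\alpha'}$, and this is also required for $\lfloor\tilde h(\bar K)\rfloor\ge 1$ in Step~3.1 of the algorithm---so you should work with the $\bar K$ normalization throughout.
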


We now turn the external consistency error estimate into an estimate in terms of $\NhNL$ as well. Let $f$ satisfy $\DH$ and suppose that $\ThNL$ and $N$ are constructed using Algorithm \ref{alg:FEM}. Since $h(x)\le \frac{x}{2}, \ \omega(x)=x\log x$, a straightforward computation yields
\begin{align*}
	\Vert \eta_{\text{ext}} \Vert_{(\YhNL)^{*}} &\lesssim \Vert h^{2} \nabla f\Vert_{L^{2}(\tOmeC)} +\frac{C_{\kappa}}{\log \bar{K}} \Vert h^{2}\omega \nabla^{2}f\Vert_{L^{2}(\tOmeC)}\\
	&\lesssim \bar{K}^{-\alpha-\frac{3}{2}}+\frac{\bar{K}^{-\alpha-\frac{3}{2}}\log^{2}N}{\log \bar{K}}.
\end{align*}

We insert $N \lesssim \bar{K}^{\frac{2}{3}(\alpha+1)}$ to obtain the following result. In particular, we can conclude that the external consistency error is dominated by the interpolation error.

\begin{theorem}\label{Decay result of nonlinear external force}
	Let $f$ satisfy $\DH$ and let $\ThNL$, $N$ be constructed by Algorithm \ref{alg:FEM}. Then
	\begin{equation*}
		\Vert \eta_{\text{ext}} \Vert_{(\YhNL)^{*}} \le C_{\alpha} \bar{K}^{-\alpha-\frac{3}{2}} \log \bar{K}.
	\end{equation*}
	where $C_{\alpha}$ depends on $\alpha$ and on $\CDH$.
\end{theorem}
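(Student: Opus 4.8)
The plan is to read off the bound from the external-force coarsening lemma stated just above and to control its two right-hand terms using the decay of $f$ and the grading of the mesh produced by Algorithm~\ref{alg:FEM}. That lemma already reduces the dual-norm estimate to
\begin{equation*}
	\Vert \eta_{\text{ext}}\Vert_{(\YhNL)^{*}} \lesssim \Vert h^{2}\nabla f\Vert_{L^{2}(\tOmeC)} + \frac{C_{\kappa}}{\log \bar K}\,\Vert h^{2}\omega\,\nabla^{2}f\Vert_{L^{2}(\tOmeC)},
\end{equation*}
with $\omega(x)=x\log x$, so the whole task is to evaluate the two weighted $L^{2}$ norms.

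First I would invoke the hypothesis that $f$ satisfies $\DH$, which supplies the pointwise decay of $\nabla f$ and $\nabla^{2}f$ (with constants proportional to $\CDH$) needed to integrate against the mesh weight. Inserting the graded size function $h\approx\tilde h$ from \eqref{Mesh size fucntion of nonlinear h} on $[-N,N]$ and $h\equiv 1$ on the pinned tail, I would compute each weighted integral over $\tOmeC$ by splitting it at $x=\bar K$ and $x=N$; this is the same type of calculation already carried out for the interpolation term in Theorem~\ref{Decay result of nonlinear best approximation term}, except that the factor $h^{2}$ and the weight $\omega$ each introduce extra powers of $x$ and a $\log$. Tracking these, the first norm contributes a clean algebraic rate $\bar K^{-\alpha-3/2}$, while the second, because of the $\omega$ weight, carries an additional $\log^{2}N$, reproducing the intermediate bound
\begin{equation*}
	\Vert \eta_{\text{ext}}\Vert_{(\YhNL)^{*}} \lesssim \bar K^{-\alpha-\frac{3}{2}} + \frac{\bar K^{-\alpha-\frac{3}{2}}\log^{2}N}{\log \bar K}
\end{equation*}
displayed before the statement.

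Finally I would substitute the choice of $N$ fixed by Algorithm~\ref{alg:FEM}, which is a fixed polynomial power of $\bar K$; hence $\log N\lesssim\log\bar K$ with constant depending only on $\alpha$, so $\log^{2}N\lesssim(\log\bar K)^{2}$ and the weighted term collapses to $\bar K^{-\alpha-3/2}\log\bar K$. This dominates the first term, and absorbing the remaining constants (together with the $\CDH$ dependence coming from $\DH$) into $C_{\alpha}$ yields the claimed estimate. I expect the main obstacle to lie entirely in the middle step: keeping honest track of the logarithmic factors generated by the weight $\omega(x)=x\log x$ against the graded mesh across the full range $[\bar K,N]$, confirming that the weighted integral scales like $\bar K^{-\alpha-3/2}\log^{2}N$ rather than picking up a stray power of $x$, and checking that the pinned tail $x>N$ (where $h\equiv1$) is genuinely dominated by the inner contribution so that the algebraic rate $\bar K^{-\alpha-3/2}$ is not degraded. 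Once the logs are pinned down, the substitution for $N$ is immediate.
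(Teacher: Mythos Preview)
Your proposal is correct and follows essentially the same route as the paper: invoke the external-force lemma, bound the two weighted $L^{2}$ norms using $\DH$ and the graded mesh to reach the intermediate estimate $\bar K^{-\alpha-3/2} + \bar K^{-\alpha-3/2}\log^{2}N/\log\bar K$, then use that Algorithm~\ref{alg:FEM} chooses $N$ as a fixed power of $\bar K$ so that $\log N\lesssim\log\bar K$. The paper's argument is exactly this, with the intermediate bound already displayed before the theorem; your only addition is being explicit about the tail $x>N$, which the paper leaves implicit in ``a straightforward computation.''
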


\subsubsection{The quasi-optimal choice of $L$}
\label{sec: choice_of_L_cg}
We consider the nonlinear-linear elasticity coupling method quasi-optimal choice of approximation parameters $ \bar{K}, L, \NhNL$, for given $K$. For the reason we choose $\rcut=2$, we fix $\bar{K}=K+2$. So we wanna balance $\bar{K}, L$ and $\NhNL$. And the key idea is how to choose $L$ to balance the lowest  order term between $\bar{K},L$.

Firstly, we use $\DH$ assumption, and we could obtain the decay result of coupling error
\begin{equation*}
	\begin{split}
		&M^{(2,1)}\Vert \nabla^{2} u\Vert_{L^{2}(\bOmeI)} +M^{(2,2)}\Vert \nabla^{3}u \Vert_{L^{2}(\bOmeC)}+M^{(3,2)}\Vert \nabla^{2}u \Vert^{2}_{L^{4}(\bOmeC)} + M^{(3,0)}\Vert \nabla u \Vert^{2}_{L^{4}(\OmeL)}\\
		&\lesssim \CDH M^{(2,1)}K^{-\alpha -1}+\CDH M^{(2,2)}\bar{K}^{-\alpha-\frac{3}{2}} +\CDH^{2}M^{(3,2)}\bar{K}^{-2\alpha -\frac{3}{2}}+\CDH^{2}M^{(3,0)}L^{-2\alpha+\frac{1}{2}} .
	\end{split}
\end{equation*}

For coarsening error, we have assumption $\NhNL \lesssim \bar{K}$. From Theorem \ref{Decay result of nonlinear best approximation term} and Theorem \ref{Decay result of nonlinear external force}, we know the interpolation error $\Vert h\nabla^{2}u\Vert_{L^{2}(\OmeC)}$ is the dominant contribution
\begin{equation*}
	\Vert h\nabla^{2} u\Vert_{L^{2}(\OmeC)} \le C_{2}^{\text{NL}} \NhNL^{-\frac{1}{2}-\alpha}.
\end{equation*}

The lowest-order term of $L$ is $\Vert \nabla u \Vert^{2}_{L^{4}(\OmeL)} \lesssim L^{-2\alpha+\frac{1}{2}}$. We balance this term with $	\Vert h\nabla^{2} u\Vert_{L^{2}(\OmeC)} \lesssim \bar{K}^{-\frac{1}{2}-\alpha}$, and get (by noticing the fact that $\bar{K}\le L$)
\begin{align}
	L &\lesssim \bar{K}^{\frac{1}{2}+\frac{3}{8\alpha-2}} \qquad (\frac{1}{2}<\alpha<1) \label{Balance of L CG 1},\\
	L &\approx \bar{K} \qquad \qquad \quad (\alpha \ge 1)\label{Balance of L CG 2}.
\end{align}

\subsection{Numerical validation}
\label{sec: experiments_qnll_cg}

In this section, we present numerical experiments to illustrate our analysis. Same as in Section \ref{sec: experiments_qnll_ncg}, the problem is a typical one-dimensional test case, with the site energy modeled using the embedded atom method (EAM), a widely used atomistic model for solids. We fix the exact solution as defined in Section \ref{sec: experiments_qnll_ncg} and compute the external forces, which are equal to the internal forces under the deformation. The decay exponent ensures that the solution and forces satisfy the decay hypothesis $\DH$.

In this section, We will demonstrate the method of controlling the length of non-linear continuum region in the QNLL model to achieve quasi-optimal convergence order, as introduced in Section \ref{Balance of QNLL CG model}. We will conduct numerical experiments with the atomistic model length of 100,000 atoms.We set energy functional and external force to \eqref{EAM of numerical experiments} and \eqref{External force of numerical experiments}, and using Algorithm \ref{alg:FEM} to construct finite element mesh. The experiments will be carried out for $\alpha$ values of $0.8, 1.0$ and $1.2$.

Firstly, let's consider the experiment with alpha set to 1.2: In this case, according to \eqref{Balance of L CG 2}, by setting the length of the nonlinear continuum region to a few atoms ($\bar{K} \approx L$), the convergence order of the QNLL method matches that of the QNL method. In the Figure below, the $x$-axis represents the degrees of freedom (dof) in mesh, while the $y$-axis shows the absolute error $\Vert \nabla \yai - \nabla y^{\text{ac}} \Vert_{L^{2}}~\text{(ac} = \text{QNL, QNLL)}$ between the reference atomistic solution $\yai$ and the a/c solutions $y^{\text{ac}}$. It can be observed that the two convergence order lines in the graph nearly overlap, indicating that the difference between the two AC solutions $\Vert \nabla y^{\text{QNL}} - \nabla y^{\text{QNLL}}\Vert$ is between $10^{-7} $and $10^{-8}$.

\begin{figure}[h]
	\centering 
	\includegraphics[width=0.6\textwidth]{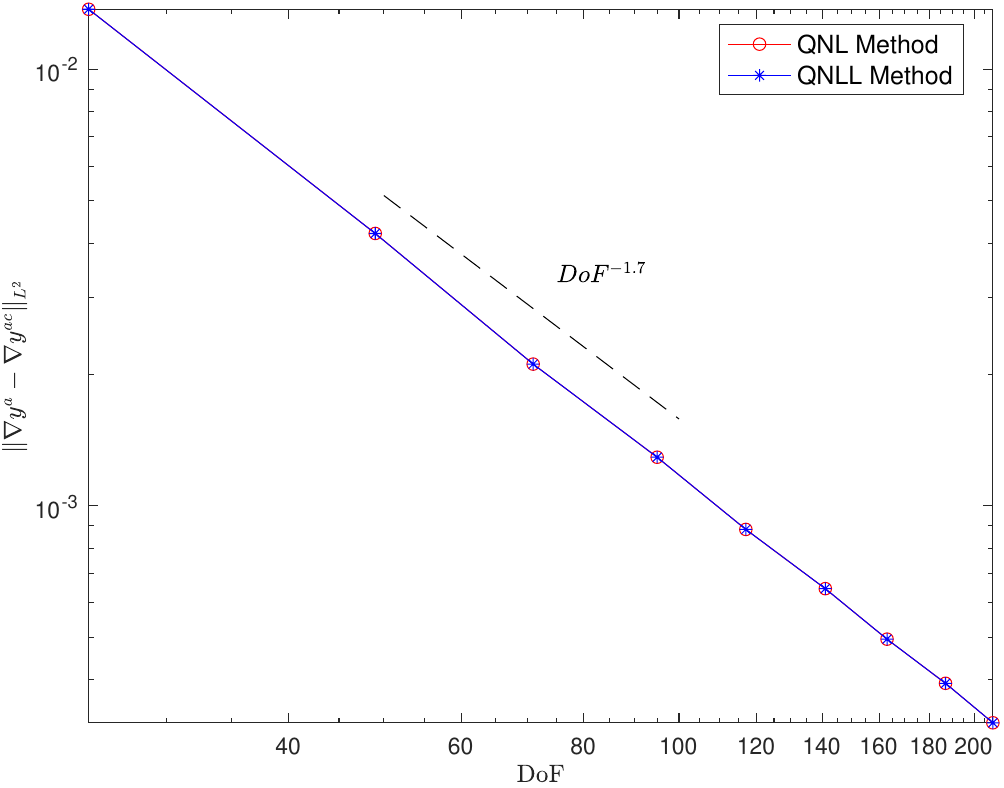}
	\caption{The convergence order of QNL and QNLL method ($\alpha = 1.2$)} 
	\label{fig: convergence_QNL_QNLL_alpha12_CG}
\end{figure}

Next, to demonstrate the computational efficiency of the QNLL method, we test the variation in computation time by progressively increasing the degrees of freedom of the nonlinear continuum region $ \Nnl$ of the QNLL method, while keeping the finite element mesh fixed, meaning the continuum region remains unchanged. The results are as shown in the table below: the first column lists the method names, with parentheses indicating the proportion of the degrees of the freedom of nonlinear continuum region$ \Nnl $ to that of the total continuum region$ \Nc$ and the second column records the ratio of the computing time of the QNLL method to the computing time of the QNL method on a device the same as in Section \ref{sec: experiments_qnll_ncg}.


\begin{table}
	\centering
	\renewcommand{\arraystretch}{1.5} 
	\begin{tabular}{|c|c|} 
		\hline 
		Method ($\Nnl/\Nc$) & The ratio of the computing time\\ 
		\hline 
		QNLL ($19.84\%$) & $69.38\%$ \\ 
		QNLL ($48.19\%$) & $77.60\%$ \\ 
		QNLL ($81.26\%$) & $87.95\%$ \\ 
		QNL ($100\%$) & $100\%$ \\ 
		\hline 
	\end{tabular}
	\caption{The computing time (with coarse graining) of QNL and QNLL method ($\alpha = 1.2$), with degree of freedom set to 1377 for all methods.}
	\label{tab:computing time alpha12 CG}
\end{table}


According to the Table \ref{tab:computing time alpha12 CG}, we can see that as the proportion of the nonlinear continuum region length to the total continuum region length increases, computing time clearly increases. However, in practical applications, the proportion of nonlinear elements will be lower (below 5$\%$) according to the balancing method described in Section \ref{Balance of QNLL CG model}. The ratio of the difference between the absolute errors of the QNLL method and the QNL method to the absolute errors of the QNL method: $( \Vert \nabla \yai - \nabla y^{\text{QNLL}} \Vert_{L^{2}} - \Vert \nabla \yai - \nabla y^{\text{QNL}} \Vert_{L^{2}}) / \Vert \nabla \yai - \nabla y^{\text{QNL}} \Vert_{L^{2}}$ is in a narrow range. Here, the ratio, as defined above, is within the range of $10^{-5}$ to $10^{-6}$. This indicates that the QNLL method maintains high accuracy while still offering computational efficiency advantages.

When $\alpha = 1$, the results are similar to when $\alpha = 1.2$. The following figure compares the convergence order of the QNLL method and the QNL method. The information represented on the axes is the same as in Figure \ref{fig: convergence_QNL_QNLL_alpha12_CG}. We observe a similar outcome to Figure \ref{fig: convergence_QNL_QNLL_alpha12_CG}, where the convergence lines of the QNLL method closely overlap with those of the QNL method.

\begin{figure}[h]
	\centering 
	\includegraphics[width=0.6\textwidth]{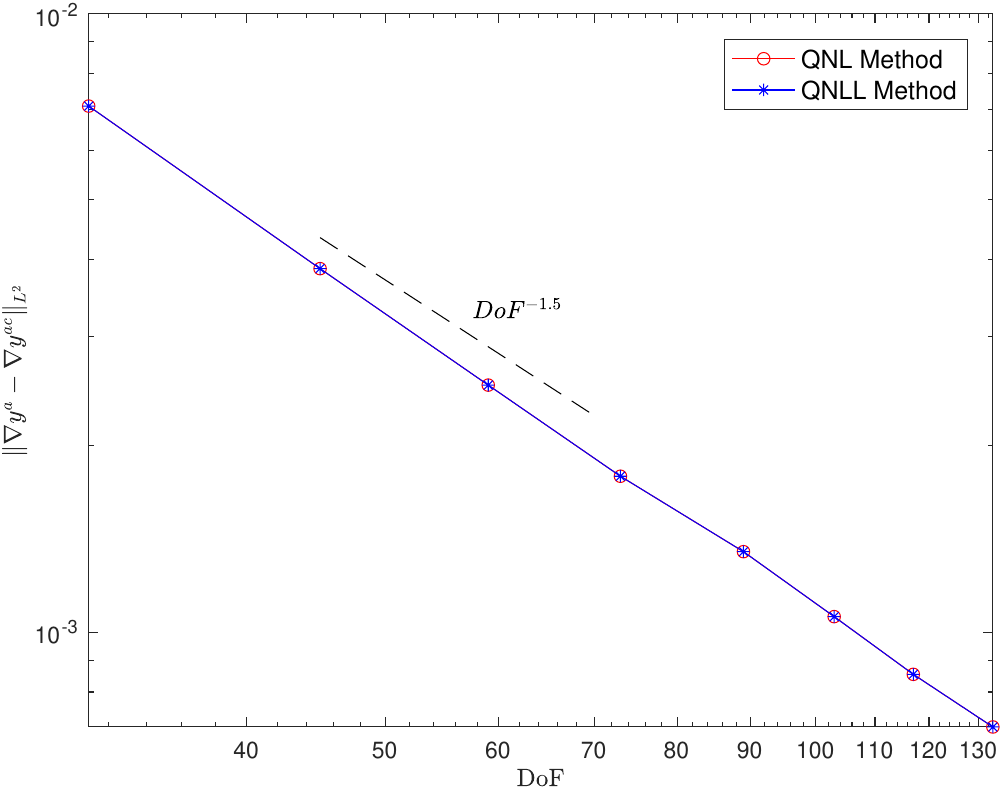}
	\caption{The convergence order of QNL and QNLL method ($\alpha = 1.0$)} 
	\label{fig: convergence_QNL_QNLL_alpha10_CG}
\end{figure}

Similar to the case when $\alpha = 0.8$, we test the gradual increase in length of the nonlinear continuum region under the condition of a fixed finite element mesh. The remaining configurations and the information represented in each column are the same as in Table  \ref{tab:computing time alpha12 CG}. We observed results similar to Table \ref{tab:computing time alpha12 CG}, where as the proportion of the nonlinear continuum region length to the continuum region length increases gradually, the computing time also increases gradually, but there is no significant reduction in error.

\begin{table}
	\centering
	\renewcommand{\arraystretch}{1.5} 
	\begin{tabular}{|c|c|} 
		\hline 
		Method ($\Nnl/\Nc$) & The ratio of the computing time\\ 
		\hline 
		QNLL ($27.89\%$) & $78.00\%$ \\ 
		QNLL ($49.67\%$) & $84.46\%$ \\ 
		QNLL ($76.91\%$) & $92.18\%$ \\ 
		QNL ($100\%$) & $100\%$ \\ 
		\hline 
	\end{tabular}
	\caption{The computing time (with coarse graining) of QNL and QNLL method ($\alpha = 1.0$), with degree of freedom set to 1943 for all methods.}
	\label{tab:computing time alpha10 CG}
\end{table}

Furthermore, we will now consider the case where $\alpha=0.8$. In this setting, according to \eqref{Balance of L CG 1} and \eqref{Balance of L CG 2}, there are two finite element mesh generation schemes for the QNLL method:
\begin{enumerate}
	\item In the first scheme, we focus on the accuracy of the QNLL method. According to \eqref{Balance of L CG 1}, we precisely balance the atomistic region, nonlinear continuum region, linear continuum region, and the total length of the computational domain to achieve convergence order identical to those of the QNL method.
	
	\item In the second scheme, we prioritize the computational efficiency of the QNLL method. Therefore, after balancing the lengths of the atomistic region and the total length of the computational domain, we minimize the length of the nonlinear continuum region as much as possible, even down to just a few atoms.
\end{enumerate}

In Figure~\ref{fig: convergence_QNL_QNLL_alpha08_CG}, we represent the first finite element mesh generation scheme with red dashed squares for the QNLL method, and the second generation scheme with blue dashed squares. To demonstrate the accuracy of the QNLL method, the QNL method also adopts the first finite element mesh generation scheme, depicted in the figure with red dashed star symbols. The information represent on the axes is the same as in Figure \ref{fig: convergence_QNL_QNLL_alpha12_CG}. We observe that, after balancing the lengths of the atomistic region, nonlinear continuum region, linear continuum region, and the total length of the computational domain, the absolute errors and convergence order obtained by the QNLL method are consistent with those of the QNL method. However, after reducing the length of the nonlinear continuum region in pursuit of computational efficiency, there is a noticeable increase in absolute errors and a decrease in convergence speed.

\begin{figure}
	\centering 
	\includegraphics[width=0.6\textwidth]{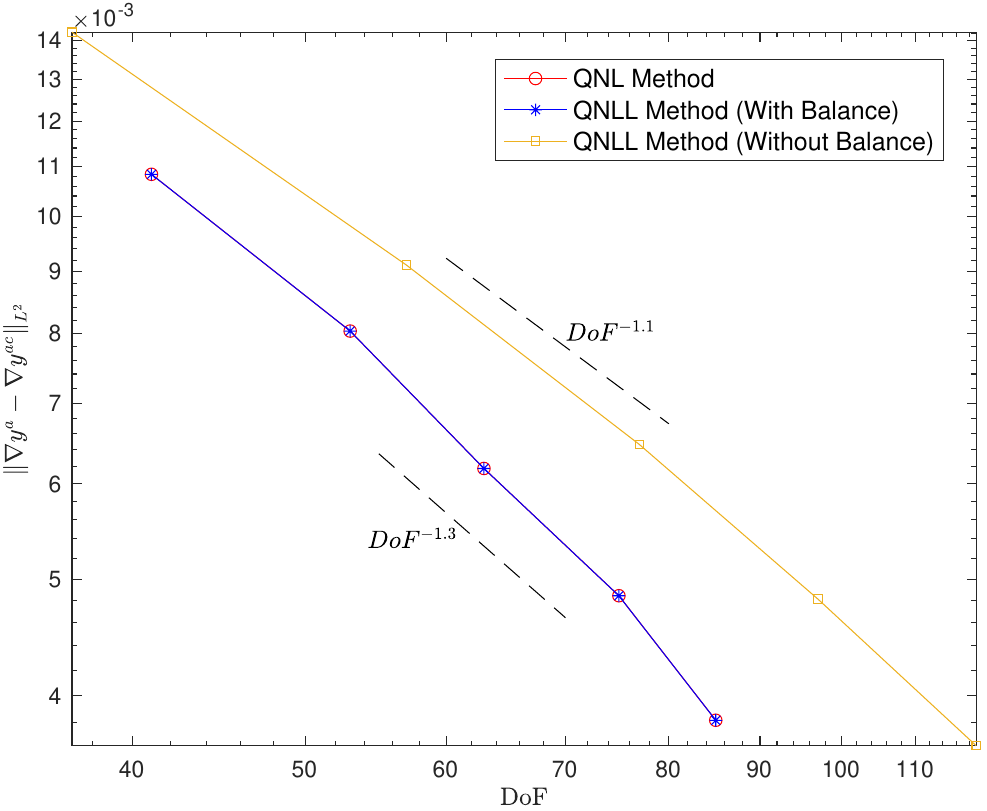}
	\caption{The convergence order of QNL and QNLL method ($\alpha = 0.8$)} 
	\label{fig: convergence_QNL_QNLL_alpha08_CG}
\end{figure}

The following table displays the changes in computation time as the length of the nonlinear continuum region gradually increase, with $\alpha$ set to 0.8. The remaining configurations and the information represented in each column are the same as in Table \ref{tab:computing time alpha12 CG}. Similar to Table \ref{tab:computing time alpha12 CG}, we observe that as the proportion of the nonlinear continuum region length to the continuum region length increases, computing time gradually increases, but there is no significant reduction in error.

\begin{table}[h]
	\centering
	\renewcommand{\arraystretch}{1.5} 
	\begin{tabular}{|c|c|} 
		\hline 
		Method ($\Nnl/\Nc$) & The ratio of the computing time\\ 
		\hline 
		QNLL ($28.25\%$) & $68.50\%$ \\ 
		QNLL ($49.13\%$) & $73.26\%$ \\ 
		QNLL ($76.97\%$) & $80.08\%$ \\ 
		QNL ($100\%$) & $100\%$ \\ 
		\hline 
	\end{tabular}
	\caption{The computing time (with coarse graining) of QNL and QNLL method ($\alpha = 0.8$), with Degree of Freedom (DoF) set to 2981 for all methods.}
	\label{tab:computing time alpha08 CG}
\end{table}

	
	\section{Conclusion and Future Work}
\label{sec: conclusion}

In this work, we incorporate the linear elasticity model to enhance the computational efficiency of the QNL method. Specifically, we introduce a coupling energy formulation that integrates the nonlinear QNL method with the linearized Cauchy-Born model, referred to as the QNLL method. Through a rigorous {\it a priori} error analysis, we demonstrate that the convergence of the QNLL method depends on the balance between the lengths of the computational, atomistic, nonlinear continuum, and linear continuum regions, as well as the finite element coarse-graining. Our analysis ensures that the QNLL method achieves the same convergence order as the original QNL method while significantly reducing computational cost. Numerical experiments validate these theoretical findings, highlighting the efficiency and practical advantages of the QNLL method.

However, several problems remain open which deserve further investigation.


\begin{itemize}
    \item {\it a Posteriori Error Control:} A natural extension of our approach is the development of \textit{a posteriori} error control methods, which have a rich body of literature~\cite{1996_RV_A_Post_Adapt,2014_CO_HW_A_Post_ACC_IMANUM, 2023_YW_HW_Efficient_Adaptivity_AC_JSC,2021_YW_HC_ML_CO_HW_LZ_A_Post_QMMM_SISC}. The main challenge in this context lies in adaptively selecting the lengths of the regions and constructing the corresponding error estimators or indicators for these lengths. Several strategies, such as balancing error estimates with mesh refinement techniques, could be explored to improve computational efficiency while maintaining accuracy in the error bounds. This extension is crucial for future work, where the focus will be on optimizing these error indicators and integrating them effectively into multiscale models.
    
	\item {\it Extension to Higher Dimensions:} Extending our framework to higher dimensions requires a shift from QNL to GRAC methods~\cite{2012_CO_LZ_GRAC_Construction_SIAMNUM,2014_CO_LZ_GRAC_Coeff_Optim_CMAME}. The introduction of linearized Cauchy-Born in this context raises significant concerns regarding the linearization errors, especially when extending to higher dimensions. Specifically, challenges arise in quantifying how these linearization errors propagate through the coupled system, and ensuring the stability of the method in higher dimensions is a direct issue. Further research is needed to assess the extent of these linearization errors in higher-dimensional settings and to determine whether additional modifications or error control techniques are required to maintain robustness and computational efficiency in the model.

	\item {\it Generalization to Other a/c Coupling Methods:} Expanding our approach to incorporate other a/c coupling techniques, such as blending methods, is an important next step~\cite{2008_SB_MP_PB_MG_AC_Blending_MMS,2011_BK_ML_BQCE_1D_SIMNUM,2016_XL_CO_AS_BK_BQC_Anal_2D_NUMMATH, wang2023adaptive}. The introduction of linearized Cauchy-Born in these methods requires further exploration, particularly with regard to how nonlinear to linear transitions are handled. A critical question is whether blending is necessary to manage this transition smoothly. Additionally, the force-based method formulation in the context of linearized Cauchy-Born is worth investigating~\cite{2008_MD_ML_Ana_Force_Based_QC_M2NA,2019_HW_SL_FY_A_Post_QCF_1D_NMTMA}, as this approach may offer significant computational speedup by reducing the problem to linear equations in the continuum region.
	
	\item {\it Theoretical Estimates for Computational Efficiency Improvements:} While we provide the priori anlysis of our method, further work is needed to quantify the theoretical estimate in computational efficiency. Specifically, optimization techniques can be applied to estimate how the proposed multiscale framework could lead to cheaper computational cost. Future research could focus on developing a theoretical framework for computational cost reduction based on the error estimates and meshing strategies, offering a more complete understanding of how to improve computational efficiency alongside accuracy.

    \item {\it Possible Connections with Sequential Multiscale Methods:} Although our current focus is on concurrent multiscale methods, future work should consider potential connections with sequential methods, particularly force-based methods like the Flexible Boundary Condition (FBC) Method~\cite{2002_CW_SR_FBC_Dislocation_PHYS,2021_MH_Anal_FBC_AC_MMS,2008_RT_LGF_Long-range_PHYS}. The FBC method, while widely used for simulating defects in materials, still faces challenges such as slow convergence speed. Our approach may offer useful insights for the convergence analysis of FBC methods, particularly regarding error propagation and the balance between the lengths of regions. A key area of future investigation is the adaptation of existing sequential methods by improving convergence rates, such as extending the relaxation region and applying finite element coarsening techniques. Furthermore, exploring hybrid methods that integrate both sequential and concurrent techniques could lead to new strategies that harness the strengths of both approaches, improving both accuracy and computational efficiency.

\end{itemize}

Both the theoretical and practical aspects discussed above will be explored in future work.

	\appendix
	\renewcommand\thesection{\appendixname~\Alph{section}}
	
	\section{Proof}
\label{sec: appendix}
\renewcommand{\theequation}{A.\arabic{equation}}

In this section, we first state the well-known inverse function theorem, which plays a crucial role in the {\it a priori} error estimates presented in this work. The result is a simplified and specialized version of~\cite[Lemma 2.2]{2011_CO_1D_QNL_MATHCOMP}, though similar formulations can be derived from standard proofs of the inverse function theorem.

\begin{lemma}\label{Inverse function theorem}
	\textbf{Inverse function theorem: }Let $\Yn$ be a subspace of $\Ya$, equipped with $\Vert \nabla \cdot \Vert_{L^{2}}$, and let $\Ghc \in C^{1}(\Yn,\Yn^{*})$ with Lipschitz-continuous derivative $\delta \Ghc$:
	\begin{equation*}
		\Vert\delta\Ghc(y) - \delta\Ghc(v)\Vert_\mathcal{L} \le M \Vert \nabla y - \nabla v \Vert_{L^{2}}, \quad \text{for all} \  v \in \Un,
	\end{equation*}
	where $\Vert \cdot \Vert_{\mathcal{L}}$ denotes the $\mathcal{L}(\Yn,\Yn^{*})$-operator norm.
	
	Let $\bar{y}\in \Yn$ satisfy
	\begin{align}
		\Vert \Ghc(\bar{y})\Vert_{\Yn^{*}} &\le \eta,\\
		\langle \delta \Ghc (\bar{y})v,v\rangle &\ge \gamma \Vert \nabla v \Vert^{2}_{L^{2}},  \quad \text{for all} \ v \in \UhNL,
	\end{align}
	such that $L,\eta,\gamma$ satisfy the relation
	\begin{equation}
		\frac{2M\eta}{\gamma^{2}}<1.
	\end{equation}
	
	Then there exists a (locally unique) $\ynll\in \Yn$ such that $\Ghc(\ynll)=0$,
	\begin{align}
		\Vert \nabla \ynll-\nabla \bar{y}\Vert_{L^{2}} &\le 2\frac{\eta}{\gamma}, \quad \text{and}\\
		\langle \delta \Ghc (\ynll)v,v\rangle &\ge (1-\frac{2M\eta}{\gamma^{2}}) \Vert \nabla v \Vert^{2}_{L^{2}},  \quad \text{for all} \ v \in \Un,
	\end{align}
\end{lemma}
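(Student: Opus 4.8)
The plan is to prove this quantitative inverse function theorem by a Banach fixed-point argument applied to a Newton-type iteration map whose fixed points are exactly the zeros of $\Ghc$. First I would invert the linearization at $\bar y$: the coercivity hypothesis $\langle \delta\Ghc(\bar y)v,v\rangle \ge \gamma \Vert \nabla v\Vert_{L^2}^2$, together with boundedness of $\delta\Ghc(\bar y)$ as an operator $\Yn \to \Yn^{*}$ (which follows from the regularity constants $M^{(j,s)}$ that control the second variation), yields via the Lax--Milgram theorem that $\delta\Ghc(\bar y)$ is boundedly invertible with $\Vert [\delta\Ghc(\bar y)]^{-1}\Vert_{\mathcal{L}(\Yn^{*},\Yn)} \le 1/\gamma$. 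I then define the map $T : \Yn \to \Yn$ by
\begin{equation*}
	T(y) := y - [\delta\Ghc(\bar y)]^{-1}\Ghc(y),
\end{equation*}
so that $T(y) = y$ if and only if $\Ghc(y) = 0$.

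The core of the argument is to show that $T$ is a contraction on a closed ball $\overline{B_r} := \{y \in \Yn : \Vert \nabla(y - \bar y)\Vert_{L^2} \le r\}$ for a suitable radius $r$. Writing $\Ghc(y) - \Ghc(\bar y) = \int_0^1 \delta\Ghc(\bar y + t(y-\bar y))(y - \bar y)\,\d t$ and using $(y - \bar y) = [\delta\Ghc(\bar y)]^{-1}\delta\Ghc(\bar y)(y-\bar y)$, I would rewrite
\begin{equation*}
	T(y) - \bar y = -[\delta\Ghc(\bar y)]^{-1}\Ghc(\bar y) + [\delta\Ghc(\bar y)]^{-1}\int_0^1\big[\delta\Ghc(\bar y) - \delta\Ghc(\bar y + t(y-\bar y))\big](y-\bar y)\,\d t.
\end{equation*}
Applying the residual bound $\Vert \Ghc(\bar y)\Vert_{\Yn^{*}} \le \eta$ and the Lipschitz bound on $\delta\Ghc$ then gives $\Vert \nabla(T(y) - \bar y)\Vert_{L^2} \le \eta/\gamma + (M/2\gamma)\,r^2$ for $y \in \overline{B_r}$. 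The self-mapping requirement $\eta/\gamma + (M/2\gamma)r^2 \le r$ is a quadratic inequality in $r$ whose discriminant is precisely $1 - 2M\eta/\gamma^2$; this is exactly where the hypothesis $2M\eta/\gamma^2 < 1$ enters, guaranteeing solvability. Taking $r$ equal to the smaller root $r_- = (\gamma/M)\big(1 - \sqrt{1 - 2M\eta/\gamma^2}\big)$ and using $1 - \sqrt{1-x} \le x$ for $x \in [0,1]$ gives $r_- \le 2\eta/\gamma$. An analogous computation for two points $y,z \in \overline{B_{r_-}}$ shows $\Vert \nabla(T(y) - T(z))\Vert_{L^2} \le (Mr_-/\gamma)\Vert \nabla(y-z)\Vert_{L^2}$ with $Mr_-/\gamma < 1$, so $T$ is a contraction.

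The Banach fixed-point theorem then furnishes a locally unique $\ynll \in \overline{B_{r_-}}$ with $\Ghc(\ynll) = 0$, and the error estimate $\Vert \nabla(\ynll - \bar y)\Vert_{L^2} \le r_- \le 2\eta/\gamma$ is immediate. For the coercivity of the solution I would perturb from $\bar y$: combining $\langle \delta\Ghc(\ynll)v,v\rangle = \langle \delta\Ghc(\bar y)v,v\rangle + \langle [\delta\Ghc(\ynll) - \delta\Ghc(\bar y)]v,v\rangle$ with the Lipschitz bound and the error estimate yields $\langle \delta\Ghc(\ynll)v,v\rangle \ge (\gamma - 2M\eta/\gamma)\Vert \nabla v\Vert_{L^2}^2 = \gamma\big(1 - 2M\eta/\gamma^2\big)\Vert \nabla v\Vert_{L^2}^2$, matching the stated constant up to the normalization factor $\gamma$.

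I expect the main obstacle to be the very first step, namely establishing boundedness of the second variation so that Lax--Milgram delivers invertibility of the (generally non-symmetric) operator $\delta\Ghc(\bar y)$ with the sharp constant $1/\gamma$. In the coarse-grained, finite-dimensional setting this is automatic from injectivity, but in the non-coarse-grained case one must verify operator boundedness on the full space $\{\nabla u \in L^2\}$ directly from the interaction regularity. Everything after this reduction is a routine contraction estimate in which the threshold $2M\eta/\gamma^2 < 1$ appears organically as the discriminant condition.
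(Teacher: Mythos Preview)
The paper does not actually prove this lemma; it states it in the appendix and cites \cite[Lemma~2.2]{2011_CO_1D_QNL_MATHCOMP}, remarking only that ``similar formulations can be derived from standard proofs of the inverse function theorem.'' Your Newton-iteration / Banach fixed-point argument is precisely that standard proof and is correct in all essentials: the invertibility of $\delta\Ghc(\bar y)$ via coercivity, the contraction estimate on the ball of radius $r_-$, and the identification of the discriminant condition $2M\eta/\gamma^2<1$ are exactly how this result is obtained in the cited reference and its antecedents.

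Your remark on the final coercivity constant is also apt: the perturbation argument you give yields $\langle\delta\Ghc(\ynll)v,v\rangle \ge \gamma\big(1 - 2M\eta/\gamma^2\big)\Vert\nabla v\Vert_{L^2}^2$, not $(1 - 2M\eta/\gamma^2)\Vert\nabla v\Vert_{L^2}^2$ as printed, so the statement in the paper appears to have dropped a factor of $\gamma$.
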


\subsection{Proof of Proposition \ref{Pointwise coupling stress tensor}}
\label{Appendix section 1}

\begin{proof}
	The proof of the coupling error estimate for the QNL model is derived from [\cite{2013_ML_CO_AC_Coupling_ACTANUM}, Lemma 6.12]. Since $\text{supp}(\Ki) = \conv\{\xi,\xi+\rho\}$, $\Rrfl=0$ for $x\in\OmeA \backslash \bOmeA$. As for $x\in \OmeI \cup \bOmeA$, we obtain
	\begin{align*}
		\Rrfl &=\Srfl-\Sa\\
		&=\sum_{\xi \in \Ic} \sum_{\rho \in \Rc}\rho\Ki\Phii_{\xi,\rho}(y)-\sum_{\xi \in\Ic\cup\Cc}\sum_{\rho \in \Rc} \rho \Ki \Phia_{\xi,\rho}(y).
	\end{align*}
	We define uniform deformation $\yF(\xi)=F\xi$. We notice that $R^{\text{rfl}}(\yF;x)=0$, hence we obtain
	\begin{equation}\label{Two parts of Rrfl}
		\begin{aligned}
			\Rrfl &= \Rrfl-R^{\text{rfl}}(\yF;x)\\
			&= \sum_{\xi \in \Ic} \sum_{\rho \in \Rc}\rho\Ki\big(\Phii_{\xi,\rho}(y)-\Phii_{\xi,\rho}(\yF)\big)\\
			&\ +\sum_{\xi \in\Ic\cup\Cc}\sum_{\rho \in \Rc} \rho \Ki \big(\Phia_{\xi,\rho}(\yF)-\Phia_{\xi,\rho}(y)\big).
		\end{aligned}
	\end{equation}
	
	For the first term of \eqref{Two parts of Rrfl}, we choose $F=\nabla y$ and use Taylor's expansion. We have
	\begin{equation}\label{Interface stess}
		\vert \Phii_{\xi,\rho}(y)-\Phii_{\xi,\rho}(\yF) \vert \le \sum_{\zeta \in \Rc} c(\rho,\zeta)\vert D_{\zeta}y(\xi)-\nabla_{\zeta}y(x)\vert.
	\end{equation}
	After using Taylor's expansion, for $x\in \conv\{\xi,\xi+\rho\}$, we obtain
	\begin{align*}
		\vert D_{\zeta}y(\xi)-\nabla_{\zeta}y(x)\vert&= \vert \zeta(\xi-x+\frac{\zeta}{2})\vert \cdot \Vert \nabla^{2}y\Vert_{L^{\infty}(\conv\{\xi,\xi+\rho\})}\\
		&\le\frac{\vert \zeta \vert^{2}}{2}\Vert \nabla^{2}u\Vert_{L^{\infty}(\conv\{\xi,\xi+\rho\})}.
	\end{align*}
	It is straightforward to calculate that
	\begin{align*}
		\sum_{\xi \in \Ic} \sum_{\rho \in \Rc} \vert\rho \vert\Ki \vert \Phii_{\xi,\rho}(y)-\Phii_{\xi,\rho}(\yF) \vert &\le \sum_{(\rho,\zeta)\in\Rc^{2}} \frac{1}{2}\vert \rho \zeta^{2}\vert c(\rho,\zeta) \Vert \nabla^{2}y\Vert_{L^{\infty}(v_{x})}\sum_{\xi \in \Ic}\Ki\\
		&\lesssim M^{(2,1)}\Vert \nabla^{2}y\Vert_{L^{\infty}(v_{x})}.
	\end{align*}
	
	Similarly, we could calculate that
	\begin{align*}
		\sum_{\xi \in\Ic\cup\Cc} \sum_{\rho \in \Rc} \vert\rho \vert\Ki \vert \Phia_{\xi,\rho}(y)-\Phia_{\xi,\rho}(\yF) \vert &\le \sum_{\rho \in \Rc} \frac{1}{2} \vert \rho \zeta^{2}\vert m(\rho,\zeta) \\
		&\le M^{(2,1)} \Vert \nabla^{2}y\Vert_{L^{\infty}(v_{x})}.
	\end{align*}
	
	For $x\in \OmeC \cap \Z+\frac{1}{2}$, we have
	\begin{equation*}
		\Rrfl = \partial_{F}W\big(\nabla y(x)\big)-\Sa,
	\end{equation*}
	which is the difference between the atomistic stress tensor and Cauchy-Born stress tensor. And we mention that the error estimate follows directly from[\cite{2013_ML_CO_AC_Coupling_ACTANUM}, Theorem6.2]
	\begin{align*}
		\Rrfl &= \partial_{F}W\big(\nabla y(x)\big)-\Sa\\
		&\lesssim M^{(2,2)}\Vert \nabla^{3}y\Vert_{L^{\infty}(v_{x})}+M^{(3,2)}\Vert \nabla^{3}y\Vert_{L^{\infty}(v_{x})}^{2}.
	\end{align*}
	This yields the stated results by noticing that $\nabla^{2}y =\nabla^{2}u,\ \nabla^{3}y=\nabla^{3}u$.
\end{proof}

\subsection{Proof of Proposition \ref{Coupling consistency error estimate}}\label{Appendix section 2}
\begin{proof}
	For \eqref{Interface region stress tensor}, the main point of this proof is to use the inverse estimates to obtain $L^{2}$-type from the $L^{\infty}$ bounds~\cite{2007_DB_FEM}
	\begin{equation}\label{L-infty to L-2 estimate}
		\Vert \nabla^{2}u\Vert_{L^{\infty}(v_{x})}\lesssim \Vert \nabla^{2}u\Vert_{L^{2}(v_{x})}.
	\end{equation}
	
	After a direct calculation, we have
	\begin{align*}
		\int_{\OmeI\cup\bOmeA}\Rrfl\nabla v\,\d x&\lesssim \int_{\OmeI\cup\bOmeA} M^{(2,1)}\Vert \nabla^{2}u\Vert_{L^{\infty}(v_{x})} \vert \nabla y \vert \,\d x\\
		&\le M^{(2,1)}\Vert \nabla^{2}u\Vert_{L^{\infty}(\bOmeI)}\Vert \nabla v \Vert_{L^{1}(\OmeI\cup\bOmeA)}\\
		&\lesssim  M^{(2,1)}\Vert \nabla^{2}u\Vert_{L^{\infty}(\bOmeI)}\Vert \nabla v \Vert_{L^{2}(\OmeI\cup\bOmeA)}.
	\end{align*}
	
	As for \eqref{Continuum region stress tensor}, combining Proposition \ref{Pointwise coupling stress tensor} and [\cite{2013_ML_CO_AC_Coupling_ACTANUM}, Corollary6.4], we yield the started results.
\end{proof}

\subsection{Proof of Lemma \ref{Pointwise blending lemma}}\label{Appendix section 3}
\begin{proof}
	Let $\psi(x):=\sqrt{1-\beta(x)}$ and assume, without loss of generality that $\rho>0$. Then,
	\begin{align*}
		\sqrt{1-\beta(\xi)} D_{\rho}v(\xi)&= \psi (\xi) \sum_{\eta = \xi}^{\xi+\rho-1}D_{1}V(\eta)\\
		&=\sum_{\eta = \xi}^{\xi+\rho-1}\psi(\eta)D_{1}V(\eta) + \sum_{\eta = \xi}^{\xi+\rho-1} \big(\psi(\xi)-\psi(\eta)\big)D_{1}V(\eta).\\
	\end{align*}
	If we define $\va$ by $D_{1}\va(\eta)=\psi(\eta)D_{1}v(\eta)$, then we obtain
	\begin{equation*}
		\sum_{\eta = \xi}^{\xi+\rho-1} \psi(\eta) D_{1}v(\eta) =D_{\rho} \va (\xi),
	\end{equation*}
	and after using Holder's inequality we know
	\begin{align*}
		\vert\sqrt{1-\beta} D_{\rho}v(\xi)-D_{\rho}\va(\xi)\vert&=\sum_{\eta = \xi}^{\xi+\rho-1}\big(\psi(\xi)-\psi(\eta)\big)D_{1}v(\eta)\\
		&\le (\sum_{\eta = \xi}^{\xi+\rho-1} \Vert \nabla \psi \Vert^{2}_{L^{\infty}} \vert \rho \vert^{2})^{\frac{1}{2}} (\sum_{\eta = \xi}^{\xi+\rho-1}\big(D_{1}v(\eta)\big)^{2})^{\frac{1}{2}}\\
		&\le \vert \rho \vert^{\frac{3}{2}} \Vert \nabla \psi \Vert_{L^{\infty}} \Vert \nabla v \Vert_{L^{2}(\xi,\xi+\rho)}.
	\end{align*}
	This establishes \eqref{Pointwise va blending estimate}. The proof of \eqref{Pointwise vc blending estimate1} is analogous, with $\vc$ defined by $D_{1}\vc(\xi) = \sqrt{\beta(\xi)}D_{1}v(\xi)$. With these definitions, \eqref{va and vc} is an immediate consequence.
\end{proof}
	
	
	

	\bibliographystyle{plain}
	\bibliography{MS_Coupling_202502.bib}
	
\end{document}